\title{Two $2/5$-level mock theta conjecture-like identities}
\author{Stepan Konenkov}
\address{Department of Mathematics and Computer Science, Saint Petersburg State University, Saint Petersburg,  Russia, 199178}
\email{konenkov.stepan@yandex.ru}
\author{Eric T. Mortenson}
\address{Department of Mathematics and Computer Science, Saint Petersburg State University, Saint Petersburg,  Russia, 199178}
\email{etmortenson@gmail.com}
\renewcommand\theta{\vartheta}
\newtheorem{theorem}{Theorem}
\newtheorem{lemma}[theorem]{Lemma}
\newtheorem{corollary}[theorem]{Corollary}
\newtheorem{proposition}[theorem]{Proposition}
\theoremstyle{definition}
\newtheorem{remark}[theorem]{Remark}
\numberwithin{theorem}{section} 
\numberwithin{equation}{section}
\newcommand{\Z}{\mathbb{Z}}
\newcommand{\im}{\textnormal{Im}}
\begin{document}

\date{1 June 2025}

\subjclass[2020]{Primary 11F37, 11F27, 33D90, 11B65; Secondary 17B67, 81R10, 81T40}

\keywords{string functions, parafermionic characters, admissible characters, polar-finite decomposition of Jacobi forms, Appell functions, mock theta functions}

\begin{abstract}
Determining the explicit forms and modularity for string functions and branching coefficients for Kac--Moody algebras after Kac, Peterson, and Wakimoto is an important problem.  In a pair of papers, Borozenets and Mortenson determined the explicit forms for fractional-level string functions for the Kac--Moody algebra $A_{1}^{(1)}$.  For positive fractional-level string functions they obtained mock theta conjecture-like identities, and for negative fractional-level string functions, they obtained mixed false theta function expressions.   Here we find two new families of mock theta conjecture-like identities but for the $2/5$-level string functions.   Each of these two families of identities is composed of the four tenth-order mock theta functions from Ramanujan's Lost Notebook as well as a simple quotient of theta functions. 
\end{abstract}

\maketitle

 \tableofcontents

\section{Introduction}

Determining the explicit forms and modularity for string functions and branching coefficients for Kac--Moody algebras after Kac, Peterson, and Wakimoto \cite{KP84, KW88pnas, KW88advmath, KW89, KW90}  is an important problem.  In a pair of papers, Borozenets and Mortenson \cite{BoMo2024, BoMo2025P2} determined the explicit forms for fractional-level string functions for the Kac--Moody algebra $A_{1}^{(1)}$ \cite{ACT, FZ85, SW}.   In the first paper \cite{BoMo2024}, they used Hecke-type double-sum expansions and mock modularity.  In the second paper \cite{BoMo2025P2}, they developed a notion of quasi-periodicity and then computed polar-finite decompositions of the admissible characters after Zagier--Zwegers \cite{DMZ, Zw02}.   For positive fractional-level string functions they obtained mock theta conjecture-like identities, and for negative fractional-level string functions, they obtained mixed false theta function expressions.  

\smallskip
Here we continue to develop the methods of the second paper and utilize polar-finite decomposition of admissible characters.  We recall that Dabholkar, Murthy, and Zagier \cite{DMZ}  extended results of Zwegers \cite{Zw02} and introduced a canonical decomposition of a meromorphic Jacobi form into a ``polar'' part and a ``finite'' part.  Here the former is completely determined by the poles of the meromorphic Jacobi form; whereas the latter is a finite linear combination of theta functions with mock modular forms as coefficients.  Here we use the polar-finite decomposition to find two new families of mock theta conjecture-like identities but for the $2/5$-level string functions.   Each of these two families of identities is composed of the four tenth-order mock theta functions from Ramanujan's Lost Notebook as well as a simple quotient of theta functions. 

\smallskip
We will focus on the case of the Kac--Moody algebra $A_1^{(1)}$. We let $p \geq 1$, $p^{\prime} \geq 2$ be coprime integers, and we define the admissible level to be
\begin{equation} \label{equation:admlevel}
N:=\frac{p^{\prime}}{p}-2.
\end{equation}

Let $q := e^{2\pi i \tau}$ with $\im(\tau) >0$ and $z \in \mathbb{C}\backslash \{0\}$. 
Using the Weyl--Kac formula, one can express the admissible character as
\begin{equation}\label{equation:WK-formula}
\chi_{\ell}^N(z;q)=\frac{\sum_{\sigma=\pm 1}\sigma \Theta_{\sigma (\ell+1),p^{\prime}}(z;q^{p})}
{\sum_{\sigma=\pm 1}\sigma\Theta_{\sigma,2}(z;q)},
\end{equation}
where theta function is defined
\begin{equation}\label{equation:SW-thetaDef}
\Theta_{n,m}(z;q):=\sum_{j\in\mathbb{Z}+n/2m}q^{mj^2}z^{-mj}.
\end{equation}
However, we will later use a different definition for the theta function.  

\smallskip
Using \eqref{equation:WK-formula}, Kac and Wakimoto \cite{KW88advmath} showed that the characters form a vector-valued Jacobi form.  Furthermore, we can define the string functions for level $N$, quantum number $m$, and spin $\ell$ with $m+\ell \equiv 0 \pmod 2$, through
\begin{equation} \label{equation:fourcoefexp}
\chi_{\ell}^N (z,q)=\sum_{m\in 2\mathbb{Z}+\ell}
C_{m,\ell}^{N}(q) q^{\frac{m^2}{4N}}z^{-\frac{1}{2}m}.
\end{equation}

We have the following symmetries for string functions \cite[(3.4), (3.5)]{SW}, \cite[(2.40)]{ACT}
\begin{equation*}
C_{m,\ell}^{N}(q) = C_{-m,\ell}^{N}(q), \ C_{m,\ell}^{N}(q) = C_{N-m,N-\ell}^{N}(q).
\end{equation*}
For the integral-level $N$ we have the periodicity property \cite[(3.5)]{SW}
\begin{equation} \label{eq:inglevelperiod}
C_{m,\ell}^{N}(q) = C_{m+2N,\ell}^{N}(q),
\end{equation}
and hence from \eqref{equation:fourcoefexp} the theta-expansion
\begin{equation}\label{eq:intlevelthetadecomp}
\chi_{\ell}^N(z,q)=\sum_{\substack{0\le m <2N\\m \in 2\Z + \ell}}C_{m,l}^{N}(q)\Theta_{m,N}(z,q).
\end{equation}
Subsequently, we will recall a quasi-periodic notion for fractional-level string functions and the polar-finite decomposition for the admissible characters.

To state some examples of integral-level string functions, let us first recall the $q$-Pochhammer notation
\begin{equation*}
(x)_n=(x;q)_n:=\prod_{i=0}^{n-1}(1-q^ix), \ \ (x)_{\infty}=(x;q)_{\infty}:=\prod_{i\ge 0}(1-q^ix),
\end{equation*}
and the theta function
\begin{equation}\label{equation:JTPid}
j(x;q):=(x)_{\infty}(q/x)_{\infty}(q)_{\infty}=\sum_{n=-\infty}^{\infty}(-1)^nq^{\binom{n}{2}}x^n,
\end{equation}
where the last equality is the Jacobi triple product identity.   We will frequently use the notation,
\begin{equation*} 
J_{a,b}:=j(q^a;q^b),
 \ \overline{J}_{a,b}:=j(-q^a;q^b), \ {\text{and }}J_a:=J_{a,3a}=\prod_{i\ge 1}(1-q^{ai}),
\end{equation*}
where $a,b$ are positive integers.  

One notes that the two definitions for theta functions are equivalent via the identity
\begin{equation*}
\Theta_{n,m}(z;q)=z^{-\frac{n}{2}}q^{\frac{n^2}{4m}}j\left ( -q^{n+m}z^{-m};q^{2m}\right).
\end{equation*}
Let us denote the Dedekind eta-function as
\begin{equation*} 
\eta(q) :=q^{1/24}\prod_{n\ge 1}(1-q^{n}).
\end{equation*}
Among more general results, Kac and Peterson \cite{KP84} showed
{\allowdisplaybreaks \begin{gather*}
c^{01}_{01} = \eta(q)^{-1},\\
c^{11}_{11} = \eta(q)^{-2}\eta(q^2),\\
c^{21}_{21} = \eta(q)^{-2} q^{3/40} J_{6,15},\\
c^{40}_{22} = \eta(q)^{-2} \eta(q^6) \eta(q^{12})^{2},\\
c^{40}_{40} - c^{40}_{04} = \eta(q^2)^{-2}.
\end{gather*}}%
Kac and Peterson employed modularity techniques to prove such string function identities \cite[p. 220]{KP84}.  In \cite[Example 1.3]{HM}, we find a closed expression in terms of theta functions for the general integral-level string function.  Additional calculations on integral-level string functions can be found in \cite{Mo24B, MPS}.

\smallskip
In this paper, we build on the methods found in the second paper of Borozenets and Mortenson \cite{BoMo2025P2}, where they developed a notion of quasi-periodicity for fractional-level string functions:

\begin{theorem}\label{theorem:generalQuasiPeriodicity}\cite[Theorem $2.1$]{BoMo2025P2} For $(p,p^{\prime})=(p,2p+j)$, we have the quasi-periodic relation for even spin 
\begin{align*}
& (q)_{\infty}^{3}C_{2jt+2s,2r}^{(p,2p+j)}(q)
 -(q)_{\infty}^{3}C_{2s,2r}^{(p,2p+j)}(q)\\
& \ \ \  = (-1)^{p}q^{-\frac{1}{8}+\frac{p(2r+1)^2}{4(2p+j)}}q^{\binom{p}{2}-p(r-s)-\frac{p}{j}s^2} \\
&\qquad  \times \sum_{i=1}^{t}q^{-2pj\binom{i}{2}-2psi}
\sum_{m=1}^{p-1}(-1)^{m}q^{\binom{m+1}{2}+m(r-p)} \times (q^{m(ji+s-j)}-q^{-m(ji+s)}) \\
&\qquad \times \Big (  j(-q^{m(2p+j)+p(2r+1)};q^{2p(2p+j)} )
  -  q^{m(2p+j)-m(2r+1)}j(-q^{-m(2p+j)+p(2r+1)};q^{2p(2p+j)})\Big ).
\end{align*}
\end{theorem}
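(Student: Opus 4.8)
The plan is to read the quasi-periodicity of the string functions off the transformation behavior of the meromorphic function $\chi_{2r}^{(p,2p+j)}(z;q)$ supplied by the Weyl--Kac formula \eqref{equation:WK-formula}. Write $\chi_{2r}(z;q)=N(z;q)/D(z;q)$ with $N(z;q):=\Theta_{2r+1,2p+j}(z;q^{p})-\Theta_{-(2r+1),2p+j}(z;q^{p})$ and $D(z;q):=\Theta_{1,2}(z;q)-\Theta_{-1,2}(z;q)$. Using the stated conversion identity between $\Theta_{n,m}$ and $j(\cdot;\cdot)$, the numerator becomes a difference of theta functions of the shape $j(-q^{\bullet}z^{-(2p+j)};q^{2p(2p+j)})$, and the standard two-dissection $j(x;q)=j(-qx^{2};q^{4})-x\,j(-q^{3}x^{2};q^{4})$ collapses the denominator to the single theta function $D(z;q)=-q^{1/8}z^{1/2}j(z^{-1};q)$. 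From the elliptic relations $\Theta_{n,m}(q^{-2p}z;q^{p})=q^{-mp}z^{m}\Theta_{n,m}(z;q^{p})$ and $\Theta_{n,2}(q^{-2p}z;q)=q^{-2p^{2}}z^{2p}\Theta_{n,2}(z;q)$ one gets that $N$ and $D$ are each quasi-periodic under $z\mapsto q^{-2p}z$, whence, iterating,
\begin{equation*}
\chi_{2r}(q^{-2pt}z;q)=q^{-pjt^{2}}z^{jt}\,\chi_{2r}(z;q)\qquad(t\ge 1)
\end{equation*}
as meromorphic functions of $z\in\C^{*}$, whose only singularities are simple poles at $z=q^{k}$, $k\in\Z$.

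Next I would identify the Laurent expansion of $\chi_{2r}$ that produces the string functions. Since $N(q^{pa};q)=0$ for every $a\in\Z$ — at $z=q^{pa}$ the two theta summands reduce to $\Theta_{\bullet,2p+j}(1;q^{p})$, which is invariant under negating its first index — the annulus $|q|^{1/2}<|z|<|q|^{-1/2}$ is free of poles of $\chi_{2r}$, and the (symmetric) Laurent expansion there is exactly $\sum_{n\in\Z}C_{2n,2r}(q)\,q^{pn^{2}/j}z^{-n}$, i.e. \eqref{equation:fourcoefexp} with $N=j/p$. Substituting $z\mapsto q^{-2pt}z$ into this series and invoking the displayed relation, $q^{pjt^{2}}z^{-jt}\chi_{2r}(q^{-2pt}z;q)$ is a second Laurent expansion of the \emph{same} function $\chi_{2r}(z;q)$, now valid on $|q|^{2pt+1/2}<|z|<|q|^{2pt-1/2}$, and its difference from the string expansion equals
\begin{equation*}
\sum_{n\in\Z}\bigl(C_{2n,2r}(q)-C_{2(n-jt),2r}(q)\bigr)q^{pn^{2}/j}z^{-n}
=\sum_{1\le k\le 2pt-1}\bigl(\operatorname{Res}_{z=q^{k}}\chi_{2r}(z;q)\bigr)\sum_{\ell\in\Z}q^{k\ell}z^{-\ell-1},
\end{equation*}
the right side running over the poles crossed between the two annuli. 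As $N$ vanishes at all $q^{pa}$, only $p\nmid k$ contribute; writing $k=2p(i-1)+m$ sorts these into $t$ blocks of $2p-2$ admissible residues. Extracting the coefficient of $z^{-(s+jt)}$, multiplying by $q^{-p(s+jt)^{2}/j}$ (and using $p(s+jt)^{2}/j=ps^{2}/j+2pst+pjt^{2}$), then multiplying through by $(q)_{\infty}^{3}=J_{1}^{3}$, produces exactly $(q)_{\infty}^{3}C_{2jt+2s,2r}(q)-(q)_{\infty}^{3}C_{2s,2r}(q)$ on the left.

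The residues are explicit: from $D(z;q)=-q^{1/8}z^{1/2}j(z^{-1};q)$ and the derivative evaluation $j'(q^{\ell};q)=(-1)^{\ell+1}q^{-\ell-\binom{\ell}{2}}(q)_{\infty}^{3}$ (a consequence of $j(x;q)=(1-x)(qx;q)_{\infty}(q/x;q)_{\infty}(q)_{\infty}$ near $x=1$ together with $j(q^{\ell}x;q)=(-1)^{\ell}q^{-\binom{\ell}{2}}x^{-\ell}j(x;q)$) one gets $D'(q^{k};q)=(-1)^{k+1}q^{1/8-k/2-\binom{k+1}{2}}(q)_{\infty}^{3}$, hence $(q)_{\infty}^{3}\operatorname{Res}_{z=q^{k}}\chi_{2r}(z;q)=(-1)^{k+1}q^{-1/8+k/2+\binom{k+1}{2}}N(q^{k};q)$ — which is precisely where the factor $(q)_{\infty}^{3}$ on the left of the theorem is consumed. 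Substituting the $j$-function form of $N(q^{k};q)$ and applying $j(x^{-1};q)=-x^{-1}j(x;q)$ turns each residue into the bracket $j(-q^{m(2p+j)+p(2r+1)};q^{2p(2p+j)})-q^{m(2p+j)-m(2r+1)}j(-q^{-m(2p+j)+p(2r+1)};q^{2p(2p+j)})$; the symmetry $\chi_{2r}(z;q)=\chi_{2r}(z^{-1};q)$ of the Weyl--Kac quotient pairs the pole labelled $m$ in a block with that labelled $2pi-m$, assembling the factor $q^{m(ji+s-j)}-q^{-m(ji+s)}$ and collapsing the $2p-2$ residues per block to $\sum_{m=1}^{p-1}$, while the outer sum $\sum_{i=1}^{t}$ with weight $q^{-2pj\binom{i}{2}-2psi}$ is generated by iterating the quasi-periodicity of $N$ across the $t$ blocks and normalizing the arguments of the $q^{2p(2p+j)}$-theta functions modulo their periods.

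I expect the genuine work to lie entirely in that last step: matching every power of $q$ and every sign so that the residue weight $(-1)^{k+1}q^{-1/8+k/2+\binom{k+1}{2}}$, the conversion exponents inside $N(q^{k};q)$, the $q^{-p(s+jt)^{2}/j}$ from the $z^{-(s+jt)}$-coefficient, and the block-index contributions combine into the prefactor $(-1)^{p}q^{-1/8+\frac{p(2r+1)^{2}}{4(2p+j)}}q^{\binom{p}{2}-p(r-s)-\frac{p}{j}s^{2}}$ together with the internal factor $(-1)^{m}q^{\binom{m+1}{2}+m(r-p)}$. The conceptual skeleton — a quasi-periodic meromorphic character, plus a finite count of residues across a shifted annulus — is short; the stated identity is essentially the organized output of that count.
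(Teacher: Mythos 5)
You should note at the outset that this paper does not prove Theorem \ref{theorem:generalQuasiPeriodicity}: it is quoted verbatim from \cite{BoMo2025P2}, so the only comparison available is with the strategy of that source, namely the quasi-periodicity/wall-crossing philosophy after Zagier--Zwegers that you adopt. Your skeleton is the natural one, and the ingredients you actually verify check out: the collapse $\Theta_{1,2}(z;q)-\Theta_{-1,2}(z;q)=-q^{1/8}z^{1/2}j(z^{-1};q)$, the elliptic transformation $\chi_{2r}(q^{-2pt}z;q)=q^{-pjt^{2}}z^{jt}\chi_{2r}(z;q)$, the vanishing of the numerator at $z=q^{pa}$ (so only $p\nmid k$ contributes), the residue weight obtained from $j'(q^{\ell};q)=(-1)^{\ell+1}q^{-\ell-\binom{\ell}{2}}(q)_{\infty}^{3}$, and the count of $t$ blocks of $2p-2$ crossed poles matching the double sum $\sum_{i=1}^{t}\sum_{m=1}^{p-1}$ with two terms per $m$.

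As a proof, however, there are two genuine gaps. First, you assert without argument that the string-function expansion \eqref{equation:fourcoefexp} is the analytic Laurent expansion of the meromorphic Weyl--Kac quotient on the central annulus $|q|^{1/2}<|z|<|q|^{-1/2}$; since the Fourier coefficients of a meromorphic Jacobi form depend on the annulus, this identification is precisely what gives the $C_{m,\ell}$ a well-defined meaning and is the non-obvious starting point of the whole formalism, so it needs justification rather than assertion. Second, and more seriously, the entire content of the identity lies in the exponents and signs --- the prefactor $(-1)^{p}q^{\binom{p}{2}-p(r-s)-\frac{p}{j}s^{2}}$, the inner weight $(-1)^{m}q^{\binom{m+1}{2}+m(r-p)}$, the factor $q^{m(ji+s-j)}-q^{-m(ji+s)}$, and the specific theta bracket with modulus $q^{2p(2p+j)}$ --- and you explicitly defer exactly this assembly. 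The pairing of the pole $k=2p(i-1)+m$ with $k=2pi-m$ (which requires combining $z\mapsto z^{-1}$ with the quasi-periodicity, not inversion alone), the reduction of the arguments of $N(q^{k};q)$ modulo the period via \eqref{equation:j-elliptic}, and the orientation and sign conventions in the residue sum across the shifted annulus are all left unverified, and any one of them could flip a sign or shift an exponent in the final formula. What you have is a correct and well-chosen outline of the expected argument, but not yet a proof of the stated identity.
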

\noindent One should compare Theorem  \ref{theorem:generalQuasiPeriodicity} with  the periodic property for integral-level string functions found in (\ref{eq:inglevelperiod}).  Using the notion of quasi-periodicity, they established a polar-finite decomposition for admissible characters.

To state the polar-finite decomposition we use the notation of Appell functions, which are the building blocks of Ramanujan's classical mock theta functions \cite[Section 5]{HM}.   Following Hickerson and Mortenson \cite{HM}, we define the Appell function as
\begin{equation}
m(x,z;q):=\frac{1}{j(z;q)}\sum_{r\in\Z}\frac{(-1)^rq^{\binom{r}{2}}z^r}{1-q^{r-1}xz}.\label{equation:m-def}
\end{equation}

It is easier to state polar-finite decompositions in terms of even-spin, and there are cross-spin identities which can be used to write odd-spin decompositions in terms of even-spin decompositions.

\begin{theorem}\label{theorem:generalPolarFinite}\cite[Theorem $2.3$]{BoMo2025P2} For $(p,p^{\prime})=(p,2p+j)$, we have the polar-finite decomposition for even spin 
\begin{align*}
&\chi_{2r}^{(p,2p+j)} (z;q)\\
&\ \  =\sum_{s=0}^{j-1}z^{-s}q^{\frac{p}{j}s^2}C_{2s,2r}^{(p,2p+j)}(q)j(-z^{j}q^{p(j-2s)};q^{2pj})\\
&\qquad +\frac{1}{(q)_{\infty}^3}\sum_{s=0}^{j-1}
(-1)^{p}q^{-\frac{1}{8}+\frac{p(2r+1)^2}{4(2p+j)}}q^{\binom{p}{2}-p(r-s)}z^{-s}
j(-q^{p(j-2s)}z^{j};q^{2jp})
   \sum_{m=1}^{p-1}(-1)^{m}q^{\binom{m+1}{2}+m(r-p)} \\
&\qquad \qquad    \times \Big (  j(-q^{m(2p+j)+p(2r+1)};q^{2p(2p+j)} )
  -  q^{m(2p+j)-m(2r+1)}j(-q^{-m(2p+j)+p(2r+1)};q^{2p(2p+j)})\Big )\\
&\qquad \qquad   \times
\Big (q^{ms-2ps}m(-q^{jm-2ps},-q^{p(j+2s)}z^{-j};q^{2jp}) + q^{-ms}m(-q^{jm+2ps},-q^{p(j-2s)}z^{j};q^{2jp})\Big ).
\end{align*}
\end{theorem}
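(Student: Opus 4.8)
The plan is to start from the quasi-periodic relation of Theorem \ref{theorem:generalQuasiPeriodicity} and convert it into a statement about the full character via the four-coefficient expansion \eqref{equation:fourcoefexp}. First I would group the sum over $m \in 2\mathbb{Z}+2r$ appearing in $\chi_{2r}^{(p,2p+j)}(z;q) = \sum_{m} C_{m,2r}^{(p,2p+j)}(q) q^{m^2/(4N)} z^{-m/2}$ into residue classes modulo $2j$, writing $m = 2jt + 2s$ with $0 \le s < j$ and $t \in \mathbb{Z}$. The exponent $q^{m^2/(4N)} = q^{p(2jt+2s)^2/(4(2p+j))}$ can be separated, after completing a square in $t$, into a factor depending only on $s$ times a factor $q^{pjt^2 + \text{(linear in }t)}$, so that summing $C_{2jt+2s,2r}^{(p,2p+j)}(q)$ against these over $t \in \mathbb{Z}$ produces, for the \emph{periodic} (``main'') part, exactly the theta function $j(-z^{j}q^{p(j-2s)};q^{2pj})$ times $z^{-s} q^{\frac{p}{j}s^2} C_{2s,2r}^{(p,2p+j)}(q)$ after an application of the Jacobi triple product \eqref{equation:JTPid}. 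This yields the first line of the claimed decomposition and isolates what must be the polar contribution.

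Next I would insert the quasi-periodic formula to express $C_{2jt+2s,2r}^{(p,2p+j)}(q)$ as $C_{2s,2r}^{(p,2p+j)}(q)$ plus the explicit double sum over $1 \le i \le t$ and $1 \le m \le p-1$ (with the telescoping difference $q^{m(ji+s-j)}-q^{-m(ji+s)}$ and the two-term theta combination in the last line of Theorem \ref{theorem:generalQuasiPeriodicity}). Substituting this into the $t$-sum and interchanging the order of summation, the inner geometric-type sum over the $i$-variable — now an infinite-in-$t$ range with the constraint $1 \le i \le t$ — is precisely the kind of sum that collapses, via the defining series \eqref{equation:m-def}, into Appell functions $m(x,z;q)$. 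Concretely, after reindexing $t$ relative to $i$ and pairing the two terms of $q^{m(ji+s-j)}-q^{-m(ji+s)}$ with the completed-square theta factor in $t$, I expect the two pieces $q^{ms-2ps}m(-q^{jm-2ps},-q^{p(j+2s)}z^{-j};q^{2jp})$ and $q^{-ms}m(-q^{jm+2ps},-q^{p(j-2s)}z^{j};q^{2jp})$ to emerge, each multiplied by the same theta function $j(-q^{p(j-2s)}z^{j};q^{2jp})$ that appeared in the main term. Collecting prefactors — the $(-1)^p q^{-1/8 + p(2r+1)^2/(4(2p+j))} q^{\binom{p}{2}-p(r-s)}$, the $z^{-s}$, the $1/(q)_\infty^3$, and the $m$-sum weight $(-1)^m q^{\binom{m+1}{2}+m(r-p)}$ together with the two-term theta bracket — should reproduce the second and third lines verbatim.

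The main obstacle I anticipate is the bookkeeping in the step where the double sum $\sum_{t} \sum_{i=1}^{t}$ is reorganized into a single sum that matches the Appell series \eqref{equation:m-def}: one must be careful that the range condition $1 \le i \le t$ (versus $t$ possibly negative, or $i$ running the other way) is handled correctly for \emph{all} $t \in \mathbb{Z}$, not just $t \ge 1$, and that the ``$-C_{2s,2r}$'' correction in the quasi-periodic relation exactly cancels the over-counting so that the residual is genuinely $1-q^{r-1}xz$ in the denominator with the right shift. The sign $\operatorname{sg}$-type splitting (the analogue of Appell's $m(x,z;q)$ having a built-in unit-step behavior) is where an off-by-one error is easiest to make. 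A secondary technical point is verifying that the completed-square exponent in $t$ matches the modulus $q^{2jp}$ inside each Appell function and inside $j(-q^{p(j-2s)}z^{j};q^{2jp})$; this is a routine but delicate check of quadratic exponents. Once these indexing issues are settled, the rest is a direct comparison of coefficients, and no further input beyond Theorems \ref{theorem:generalQuasiPeriodicity}, the expansion \eqref{equation:fourcoefexp}, and the definitions \eqref{equation:JTPid} and \eqref{equation:m-def} is needed.
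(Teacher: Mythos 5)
First, note that this paper does not actually prove Theorem \ref{theorem:generalPolarFinite}: it is imported verbatim from \cite{BoMo2025P2}, so there is no in-paper proof to match. Your plan — split the expansion \eqref{equation:fourcoefexp} into residue classes $m=2jt+2s$ modulo $2j$, complete the square so that the $C_{2s,2r}$ ("periodic") part assembles via \eqref{equation:JTPid} into $z^{-s}q^{\frac{p}{j}s^2}C_{2s,2r}^{(p,2p+j)}(q)\,j(-z^jq^{p(j-2s)};q^{2pj})$, and then feed the quasi-periodic correction of Theorem \ref{theorem:generalQuasiPeriodicity} back into the $t$-sum so that it collapses into Appell functions \eqref{equation:m-def} — is the natural derivation and evidently the route of the cited source. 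Your main-term computation is correct: with $N=j/p$ one indeed gets $q^{m^2/(4N)}z^{-m/2}=q^{\frac{p}{j}s^2}z^{-s}\,q^{pjt^2+2pst}z^{-jt}$ and the $t$-sum is exactly $j(-z^jq^{p(j-2s)};q^{2pj})$.

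The genuine gap is that the step carrying all the content is only asserted ("I expect \dots to emerge"). Two things are missing concretely. (i) The collapse of $\sum_{t}\sum_{i=1}^{t}$ into Appell functions: after interchanging and setting $t=i+k$, the quadratic $q^{-2pj\binom{i}{2}}$ from Theorem \ref{theorem:generalQuasiPeriodicity} cancels against $q^{pjt^2}$ so that the $i$-dependence becomes geometric and produces the denominators $1-q^{r-1}xz$ of \eqref{equation:m-def}; this cancellation, and the verification that the surviving exponents give precisely the arguments $-q^{jm\mp2ps}$ and the prefactors $q^{ms-2ps}$, $q^{-ms}$, must actually be carried out — it is a computation, not a formality. (ii) More importantly, the half of the character expansion with $t\le -1$ is not handled at all: the quasi-periodic relation as stated only controls $t\ge 1$, and extending to negative $t$ requires either the symmetry $C_{m,\ell}^{N}=C_{-m,\ell}^{N}$ (which sends the residue class $s$ to $j-s$, so contributions must be regrouped across different $s$) or a reversed quasi-periodic relation. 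This is precisely where the \emph{two} distinct Appell functions in the final bracket, $m(-q^{jm-2ps},-q^{p(j+2s)}z^{-j};q^{2jp})$ and $m(-q^{jm+2ps},-q^{p(j-2s)}z^{j};q^{2jp})$, come from (one from each half-range), so flagging the issue without a mechanism leaves the final bracket unverified. Until (i) and (ii) are written out, the proposal is a correct strategy but not a proof.
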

\begin{remark}  We see that Theorem (\ref{theorem:generalPolarFinite}) is already in the most general form; indeed, we can easily replace $j$ with $(n-2)p+j$.
\end{remark}

For $j=1$ we have the following immediate corollary.
\begin{corollary}\label{corollary:polarFinite1p} \cite[Corollary $2.5$]{BoMo2025P2} For $(p,p^{\prime})=(p,2p+1)$, $(m,\ell)=(2k,2r)$, we have
\begin{align*}
\chi_{2r}^{(p,2p+1)}&(z;q)\\
&=C_{0,2r}^{(p,2p+1)}(q)
j(-q^{p}z;q^{2p})
 \\
&\qquad +(-1)^{p}q^{-\frac{1}{8}+\frac{p(2r+1)^2}{4(2p+1)}+\binom{p}{2}-rp}\frac{j(-q^{p}z;q^{2p})}{(q)_{\infty}^3}
\sum_{m=1}^{p-1} 
(-1)^{m}q^{\binom{m+1}{2}+m(r-p)}
 \\
&\qquad \qquad \times \left ( j(-q^{m(2p+1)+p(2r+1)};q^{2p(2p+1)}) 
-q^{2m(p-r)}j(-q^{-m(2p+1)+p(2r+1)};q^{2p(2p+1)})\right )
 \\
&\qquad \qquad \times 
 \left ( m(-q^{m},-q^{p}z;q^{2p})
+m(-q^{m},-q^{p}z^{-1};q^{2p})\right ).
\end{align*}
\end{corollary}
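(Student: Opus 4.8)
The plan is to obtain Corollary \ref{corollary:polarFinite1p} as the direct $j=1$ specialization of Theorem \ref{theorem:generalPolarFinite}, so the proof is essentially a transcription together with routine simplification of exponents. First I would set $j=1$ (equivalently $p^{\prime}=2p+1$) throughout the statement of Theorem \ref{theorem:generalPolarFinite}. The key structural observation is that both sums over $s$ run over $0\le s\le j-1$, which for $j=1$ collapses to the single index $s=0$; everything then reduces to simplifying that one surviving summand.

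Next I would carry out the bookkeeping on the $s=0$ term. In the polar (theta) piece, $z^{-s}q^{\frac{p}{j}s^2}C_{2s,2r}^{(p,2p+j)}(q)\,j(-z^{j}q^{p(j-2s)};q^{2pj})$ becomes $C_{0,2r}^{(p,2p+1)}(q)\,j(-q^{p}z;q^{2p})$. In the finite piece, the prefactor $q^{\binom{p}{2}-p(r-s)}z^{-s}$ becomes $q^{\binom{p}{2}-rp}$ and $j(-q^{p(j-2s)}z^{j};q^{2jp})$ becomes $j(-q^{p}z;q^{2p})$, which I would pull out in front to match the displayed form. Inside the $m$-sum, I would reduce $q^{m(2p+j)-m(2r+1)}$ with $j=1$ via $2mp+m-2mr-m=2m(p-r)$, so the bracketed theta difference becomes $j(-q^{m(2p+1)+p(2r+1)};q^{2p(2p+1)})-q^{2m(p-r)}j(-q^{-m(2p+1)+p(2r+1)};q^{2p(2p+1)})$. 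Finally, in the Appell bracket $q^{ms-2ps}m(-q^{jm-2ps},-q^{p(j+2s)}z^{-j};q^{2jp})+q^{-ms}m(-q^{jm+2ps},-q^{p(j-2s)}z^{j};q^{2jp})$, all of the factors $q^{\pm ms}$ and $q^{-2ps}$ equal $1$ at $s=0$, and the modulus/argument exponents collapse, leaving $m(-q^{m},-q^{p}z^{-1};q^{2p})+m(-q^{m},-q^{p}z;q^{2p})$. Collecting the common factor $j(-q^{p}z;q^{2p})/(q)_{\infty}^{3}$ and combining the two $q$-power prefactors into $(-1)^{p}q^{-\frac18+\frac{p(2r+1)^2}{4(2p+1)}+\binom{p}{2}-rp}$ yields exactly the claimed identity.

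There is no genuine obstacle here — the corollary is Theorem \ref{theorem:generalPolarFinite} with $j$ set to $1$ — so the only real care needed is the exponent arithmetic, in particular verifying $m(2p+1)-m(2r+1)=2m(p-r)$ and checking that every $s$-dependent factor truly disappears at $s=0$ so that no hidden power of $q$ or of $z$ is lost in the collapse. As a consistency check one may note the degenerate case $p=1$: the inner sum $\sum_{m=1}^{p-1}$ is then empty and the corollary correctly reduces to $\chi_{2r}^{(1,3)}(z;q)=C_{0,2r}^{(1,3)}(q)\,j(-qz;q^{2})$, which is the expected single-theta decomposition at that level.
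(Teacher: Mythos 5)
Your proposal is correct and matches the paper's treatment: the corollary is stated there as an immediate consequence of Theorem \ref{theorem:generalPolarFinite}, obtained exactly as you do by setting $j=1$, collapsing the sums over $s$ to the single term $s=0$, and simplifying $m(2p+1)-m(2r+1)=2m(p-r)$. Your exponent bookkeeping and the $p=1$ sanity check are both sound, so nothing further is needed.
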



 The string functions have a lead fractional exponent that makes them modular.  To remove the fractional exponent and make the $q$-series easier to deal with, we define
\begin{equation*}
s_{\ell}^{N}:=-\frac{1}{8}+\frac{(\ell+1)^2}{4(N+2)}, \ \ 
s_{m, \ell}^{N} := s_{\ell}^{N} - \frac{m^2}{4N}.   
\end{equation*}
We incorporate the above into the additional notation
\begin{equation}
\mathcal{C}_{m,\ell}^{N}(q) := q^{-s_{m,\ell}^{N}}C_{m,\ell}^{N}(q) \in \Z[[q]].
\end{equation} 
In particular, we note that for $N:=\frac{2p+j}{p}-2$, that
\begin{equation}
q^{-\frac{1}{8}+\frac{(\ell+1)^2}{4(N+2)}-\frac{m^2}{4N}}\mathcal{C}_{m,\ell}^{(p,2p+j)}(q)
=C_{m,\ell}^{(p,2p+j)}(q).\label{equation:mathCalCtoStringC}
\end{equation}

\begin{theorem}\cite[Theorem $2.2$]{BoMo2025P2}  For $(p,p^{\prime})=(p,2p+1)$, we have the cross-spin identity for even spin
\begin{align}
(q)^3_{\infty} \mathcal{C}^{(p,2p+1)}_{0,2k}(q) = (q)^3_{\infty} &\cdot (-1)^{p+1} q^{-p(k+1)} q^{\binom{p+1}{2}}\mathcal{C}^{(p,2p+1)}_{1,2p-1-2k}(q)\\
&- \sum_{m=1}^{p} (-1)^m q^{-m(k+1)} q^{\binom{m+1}{2}}j(-q^{-m(2p+1)+p(2p+2k+2)};q^{2p(2p+1)})
\notag \\
&+\sum_{m=1}^{p} (-1)^m q^{mk} q^{\binom{m+1}{2}}j(-q^{-m(2p+1)+p(2p-2k)};q^{2p(2p+1)}).\notag
\end{align}
\end{theorem}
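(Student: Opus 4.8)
The plan is to combine a single $z$-rescaling functional equation for the admissible character with a residue computation and then rescale to the normalized string functions; throughout, this is the case $(p,p^{\prime})=(p,2p+1)$, so $N=1/p$. First I would establish, from \eqref{equation:WK-formula}, the cross-spin functional equation
\[
\chi_{\ell}^{(p,2p+1)}(q^{p}z;q)=(-1)^{p+1}q^{-p/4}\,z^{-1/2}\,\chi_{2p-1-\ell}^{(p,2p+1)}(z;q).
\]
This follows by substituting $z\mapsto q^{p}z$ in \eqref{equation:WK-formula}: completing the square in \eqref{equation:SW-thetaDef} gives $\Theta_{n,p^{\prime}}(q^{p}z;q^{p})=z^{-p^{\prime}/2}q^{-pp^{\prime}/4}\Theta_{n-p^{\prime},p^{\prime}}(z;q^{p})$, after which the periodicity $\Theta_{n,m}=\Theta_{n+2m,m}$ together with the reflection $\Theta_{-n,m}(z;q)=\Theta_{n,m}(z^{-1};q)$ turns the numerator of \eqref{equation:WK-formula} for spin $\ell$ into minus the numerator for spin $p^{\prime}-\ell-2=2p-1-\ell$; for the denominator one uses $\sum_{\sigma=\pm1}\sigma\,\Theta_{\sigma,2}(z;q)=q^{1/8}z^{-1/2}j(z;q)$ and $j(q^{p}z;q)=(-1)^{p}q^{-\binom{p}{2}}z^{-p}j(z;q)$, which give $\sum_{\sigma=\pm1}\sigma\,\Theta_{\sigma,2}(q^{p}z;q)=(-1)^{p}q^{-p^{2}/2}z^{-p}\sum_{\sigma=\pm1}\sigma\,\Theta_{\sigma,2}(z;q)$. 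Forming the quotient and collecting the powers of $q$ with $p^{\prime}=2p+1$ yields the displayed equation.

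Next I would pass to string functions. Under $z=w^{2}$ the character is a meromorphic function of $w$ whose only singularities are simple poles on the circles $|w|=|q|^{n/2}$ --- the zeros of $j(w^{2};q)$, the apparent pole at $w=\pm1$ being removable since the numerator of \eqref{equation:WK-formula} vanishes at $z=1$ --- and by \eqref{equation:fourcoefexp} the string function $C_{m,\ell}^{(p,2p+1)}(q)$ is the coefficient of $q^{pm^{2}/4}w^{-m}$ in the Laurent expansion of this function in the annulus fixed in \cite{BoMo2025P2}, so $C_{m,\ell}^{(p,2p+1)}(q)\,q^{pm^{2}/4}=\frac{1}{2\pi i}\oint_{|w|=\rho}\chi_{\ell}^{(p,2p+1)}(w^{2};q)\,w^{m-1}\,dw$ for $\rho$ in that annulus. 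Substituting the functional equation inside this integral and then rescaling $w\mapsto q^{p/2}w$ moves the contour into a neighbouring annulus, crossing exactly the $p$ pole-circles $|w|=|q|^{-m/2}$, $m=1,\dots,p$ (so not the circle $w=\pm1$); by the residue theorem, taking $m=0$, $\ell=2k$, and using $C_{-1,\cdot}^{(p,2p+1)}=C_{1,\cdot}^{(p,2p+1)}$, one obtains
\[
C_{0,2k}^{(p,2p+1)}(q)=(-1)^{p+1}C_{1,2p-1-2k}^{(p,2p+1)}(q)+(-1)^{p+1}q^{-p/4}\sum_{m=1}^{p}R_{m},
\]
where $R_{m}$ is the sum of the residues of $\chi_{2p-1-2k}^{(p,2p+1)}(w^{2};q)\,w^{-2}$ at $w=q^{-m/2}$ and at $w=-q^{-m/2}$.

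To finish I would evaluate the $R_{m}$ and rescale. Writing $\chi_{2p-1-2k}^{(p,2p+1)}(w^{2};q)=w\cdot\big(\text{numerator of \eqref{equation:WK-formula}}\big)\big/\big(q^{1/8}j(w^{2};q)\big)$ and using $\tfrac{\partial}{\partial z}j(z;q)\big|_{z=q^{n}}=(-1)^{n+1}q^{-\binom{n+1}{2}}(q)_{\infty}^{3}$ gives $R_{m}=(-1)^{m+1}q^{\binom{m+1}{2}}\big(\text{numerator of \eqref{equation:WK-formula} at }z=q^{-m}\big)\big/\big(q^{1/8}(q)_{\infty}^{3}\big)$, so a factor $1/(q)_{\infty}^{3}$ appears in each $R_{m}$; the identity $\Theta_{a,m}(z;q^{p})=z^{-a/2}q^{pa^{2}/(4m)}j(-q^{p(a+m)}z^{-m};q^{2pm})$ --- which is \eqref{equation:SW-thetaDef} rewritten via the Jacobi triple product \eqref{equation:JTPid} --- then turns that numerator, namely $\sum_{\sigma=\pm1}\sigma\,\Theta_{\sigma(2p-2k),2p+1}(q^{-m};q^{p})$, into a difference of two theta functions to base $q^{2p(2p+1)}$. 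Substituting via \eqref{equation:mathCalCtoStringC}, multiplying through by $(q)_{\infty}^{3}$, and simplifying exponents, the identity $4(p-k)^{2}-(2k+1)^{2}=(2p-4k-1)(2p+1)$ collapses the coefficient of $\mathcal{C}_{1,2p-1-2k}^{(p,2p+1)}$ to $(-1)^{p+1}q^{\binom{p+1}{2}-p(k+1)}$, and the theta-function symmetries $j(-q^{a};q^{M})=j(-q^{M-a};q^{M})$ and $j(-q^{a+M};q^{M})=q^{-a}j(-q^{a};q^{M})$ (with $M=2p(2p+1)$) rewrite the theta-exponents into $-m(2p+1)+p(2p+2k+2)$ and $-m(2p+1)+p(2p-2k)$; what then remains is exactly the theorem's two sums $\mp\sum_{m=1}^{p}(-1)^{m}q^{\cdots}q^{\binom{m+1}{2}}j(\cdots\,;q^{2p(2p+1)})$.

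The substantive difficulty is concentrated in the second step: pinning down the precise annulus \cite{BoMo2025P2} uses to define the fractional-level string functions, and then verifying that rescaling by $q^{p/2}$ crosses exactly the $p$ pole-circles labelled by $m=1,\dots,p$ --- so that the circle $w=\pm1$ is excluded, consistently with the numerator of \eqref{equation:WK-formula} vanishing there --- and that every crossed pole is simple. Granting that, the remainder is a long but routine bookkeeping of powers of $q$ and of signs; the one place it is easy to slip is in matching the two residue sums with the two sums of the theorem, which one can cross-check against the coefficient of $\mathcal{C}_{1,2p-1-2k}^{(p,2p+1)}$ found above and against the parallel structure of Theorem \ref{theorem:generalQuasiPeriodicity}, whose inner sum runs only to $p-1$.
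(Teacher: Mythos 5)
This statement is imported from \cite[Theorem 2.2]{BoMo2025P2}; the present paper gives no proof of it, so there is no internal argument to compare yours against. Judged on its own terms, your route is sound and is the natural ``spectral flow plus wall-crossing'' derivation, very much in the spirit of the quasi-periodicity machinery quoted here (Theorems \ref{theorem:generalQuasiPeriodicity} and \ref{theorem:generalPolarFinite}). I checked the key steps: the functional equation $\chi_{\ell}^{(p,2p+1)}(q^{p}z;q)=(-1)^{p+1}q^{-p/4}z^{-1/2}\chi_{2p-1-\ell}^{(p,2p+1)}(z;q)$ is correct; your residue formula via $\frac{\partial}{\partial z}j(z;q)\big|_{z=q^{n}}=(-1)^{n+1}q^{-\binom{n+1}{2}}(q)_{\infty}^{3}$ is correct; the exponent identity $4(p-k)^{2}-(2k+1)^{2}=(2p-4k-1)(2p+1)$ does collapse the cross-spin coefficient to $(-1)^{p+1}q^{\binom{p+1}{2}-p(k+1)}$ after the rescaling \eqref{equation:mathCalCtoStringC}; and the residue sums, once rewritten, do reproduce the two theta sums of the statement.

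Three caveats, none fatal. First, the circle $\lvert w\rvert=\lvert q\rvert^{-p/2}$ is not in fact a pole circle: the numerator of \eqref{equation:WK-formula} vanishes at $z=q^{-p}$ (for the same reason it vanishes at $z=1$), which is precisely why the two $m=p$ terms in the stated sums cancel against each other; your phrasing ``exactly the $p$ pole-circles'' should be adjusted, but including the vanishing $R_{p}$ is harmless. Second, your final matching needs more than the two $j$-symmetries you name: after evaluating the Weyl numerator at $z=q^{-m}$ the theta arguments come out as $m(2p+1)+p(4p+1-2k)$ and $m(2p+1)+p(2k+1)$, and one must also reindex $m\mapsto p-m$ before those symmetries land you on $-m(2p+1)+p(2p+2k+2)$ and $-m(2p+1)+p(2p-2k)$; I verified that with this reindexing the exponents $\binom{m+1}{2}+mk$ and $\binom{m+1}{2}-m(k+1)$ and the signs all match, but your sketch omits the reindexing, which is exactly the ``easy to slip'' spot you flag. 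Third, two conventions need to be pinned down rather than assumed: the expansion \eqref{equation:fourcoefexp} must be taken in the annulus $\lvert q\rvert<\lvert z\rvert<\lvert q\rvert^{-1}$ containing $\lvert z\rvert=1$ (this is the choice compatible with $C_{-m,\ell}=C_{m,\ell}$ and $\mathcal{C}_{m,\ell}\in\Z[[q]]$), and one should note that for the odd spin $2p-1-2k$ the integrand $\chi(w^{2};q)w^{-2}$ is an odd function of $w$, so the residues at $w=\pm q^{-m/2}$ add rather than cancel --- your $R_{m}$ is the correct sum, but the parity check deserves a sentence, since getting it wrong would silently kill every correction term.
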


\section{Ramanujan's mock theta functions and new mock theta conjecture-like identities}

In Ramanujan's last letter to Hardy, he gave a list of seventeen so-called mock theta functions.   Each function was defined by Ramanujan as a $q$-series convergent for $|q|<1$.  Although the mock theta functions are not theta functions, they do have asymptotic properties similar to those of ordinary theta functions.  In the letter, one finds third-order, fifth-order, and  seventh-order mock theta functions; however, the term ``order'' is not well-defined.

We recall two of the third-order mock theta functions found in Ramanujan's last letter to Hardy:
\begin{equation*}
f_3(q):=\sum_{n\ge 0}\frac{q^{n^2}}{(-q)_n^2}, \ \ 
\omega_3(q)
:=\sum_{n\ge 0}\frac{q^{2n(n+1)}}{(q;q^2)_{n+1}^2}.
\end{equation*}
We also recall some examples from \cite{BoMo2025P2} which will help to motivate the general shape of our new results.   

\smallskip
For the $1/3$-level string functions Borozenets and Mortenson obtained the family of identities:
\begin{theorem}\label{theorem:newMockThetaIdentitiespP37m0ell2r}\cite[Theorem $2.6$]{BoMo2025P2}
For $(p,p^{\prime})=(3,7)$, $(m,\ell)=(0,2r)$, $r\in \{0,1,2\}$ we have
\begin{align*}
(q)_{\infty}^3&\mathcal{C}_{0,2r}^{1/3}(q)
=(-q)^{-r}\frac{(q)_{\infty}^3}{J_{2}}
\frac{j(-q^{1+2r};q^{14})j(q^{16+4r};q^{28})}
{j(-1;q)J_{28}} \\
&\qquad   -q^{2-2r}\frac{j(q^{6-2r};q^{14})j(q^{26-4r};q^{28})}{J_{28}} \omega_3(-q)
+  \frac{q^{-r}}{2} \frac{j(q^{1+2r};q^{14})j(q^{16+4r};q^{28})}{J_{28}} 
f_{3}(q^2).
\end{align*}
\end{theorem}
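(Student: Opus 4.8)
The plan is to derive the identity from the polar-finite decomposition specialized to $j=1$. Since $(p,p')=(3,7)$ gives $j=p'-2p=1$, Corollary~\ref{corollary:polarFinite1p} applies with $p=3$, and its inner sum runs only over $m\in\{1,2\}$. This presents $\chi_{2r}^{(3,7)}(z;q)$ as the ``finite'' term $C_{0,2r}^{(3,7)}(q)\,j(-q^3z;q^6)$ plus a ``polar'' term equal to $j(-q^3z;q^6)$ times an explicit $q$-prefactor times $\sum_{m=1}^{2}\big(\text{a product of two }j(\cdot;q^{42})\text{'s and a }q\text{-power}\big)\big(m(-q^m,-q^3z;q^6)+m(-q^m,-q^3z^{-1};q^6)\big)$. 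The theorem then amounts to the claim that, once their $z$-dependence is stripped off, the symmetrized Appell combinations appearing here reassemble into the $f_3(q^2)$- and $\omega_3(-q)$-terms of the asserted right-hand side, with the remaining debris collapsing into the pure theta-quotient term.

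The arithmetic heart is this recognition step. I would apply the Hickerson--Mortenson change-of-$z$ formula, $m(x,z_1;q)-m(x,z_0;q)=(\text{an explicit theta quotient in }x,z_0,z_1)$, together with the standard functional equations of $m(x,z;q)$ in $x$ and in $z$ (in particular $m(x,z;q)=x^{-1}m(x^{-1},z^{-1};q)$), to move each $m(-q^m,-q^3z^{\pm1};q^6)$ to a $z$-free argument such as $m(-q^m,-q^3;q^6)$, paying explicit theta corrections. The Appell--Lerch representation of $\omega_3$ has base $q^6$ (so $\omega_3(-q)$ keeps base $q^6$) and that of $f_3$ has base $q^3$ (so $f_3(q^2)$ has base $q^6$), matching the base of the Appell functions in Corollary~\ref{corollary:polarFinite1p}; hence these $z$-free Appell sums become exactly $f_3(q^2)$ and $\omega_3(-q)$ up to theta factors, which accounts for the two mock-theta terms in the statement.

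To pin down $\mathcal C_{0,2r}^{1/3}(q)$ itself one still needs $\chi_{2r}^{(3,7)}(z;q)$ as an explicit object, since the polar term of Corollary~\ref{corollary:polarFinite1p} has vanishing constant $z$-coefficient --- its Laurent coefficients reproduce the quasi-periodicity differences $C_{2n,2r}^{(3,7)}-C_{0,2r}^{(3,7)}$ of Theorem~\ref{theorem:generalQuasiPeriodicity} --- so $C_{0,2r}^{(3,7)}(q)$ cannot be read off from that corollary alone. I would take $\chi_{2r}^{(3,7)}$ from the Weyl--Kac quotient \eqref{equation:WK-formula}, write $C_{0,2r}^{(3,7)}(q)=\big(\chi_{2r}^{(3,7)}(z;q)-(\text{polar term})\big)/j(-q^3z;q^6)$, evaluate the right side by comparing $z$-Laurent coefficients in a fixed annulus avoiding the poles of $\chi_{2r}^{(3,7)}$, then multiply by $q^{-s_{0,2r}^{1/3}}$ and $(q)_\infty^3$ and compress all the theta products of modulus $6$, $12$, and $42$ into the advertised quotient using the Jacobi triple product, the shift formulas $j(q^nx;q)=(-1)^nq^{-\binom{n}{2}}x^{-n}j(x;q)$ and $j(x;q)=j(q/x;q)$, and the residue-class splittings and product-to-sum manipulations of \cite{HM}. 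This should yield the stated closed form, including the factor $(q)_\infty^3/(J_2\,j(-1;q)J_{28})$; I would finish by running the computation separately for $r=0,1,2$ and checking the leading $q$-coefficients.

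I expect the main obstacle to lie in these last two steps. First, pinning down the precise signs, $q$-powers, and rational constants when translating the symmetrized Appell combinations into $f_3(q^2)$ and $\omega_3(-q)$: the change-of-$z$ and change-of-$x$ corrections proliferate and must cancel exactly against each other, against the leftover finite term, and against $\chi_{2r}^{(3,7)}$, which is only given implicitly. Second, the mechanical but unforgiving theta-function algebra needed to collapse the mixed-modulus products into the target quotient, where a single misplaced power of $q$ or sign ruins the identity; keeping careful track of the annulus in $z$, so that the coefficient extraction is legitimate despite the poles of $\chi_{2r}^{(3,7)}$, is a secondary but genuine point of care.
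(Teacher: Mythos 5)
You have assembled the right toolkit (Corollary~\ref{corollary:polarFinite1p} with $p=3$, the Weyl--Kac character as the extra input, Appell-to-mock-theta conversion), and your observation that $C_{0,2r}^{(3,7)}$ cannot be read off from the decomposition alone but must be solved for using the character itself is correct. However, your route bypasses the one idea that makes this tractable and that the shape of the claimed formula is pointing at: specialize $z$ to the torsion point $z=-1$. There $-q^{3}z=-q^{3}z^{-1}=q^{3}$, so the symmetrized combination in Corollary~\ref{corollary:polarFinite1p} collapses on the nose to $2m(-q^{m},q^{3};q^{6})$ with \emph{no} change-of-$z$ corrections, the prefactor $j(-q^{3}z;q^{6})$ becomes $J_{3,6}\neq 0$, and Proposition~\ref{proposition:WeylKac} together with the triple product evaluates $\chi_{2r}^{(3,7)}(-1;q)$ as an explicit theta quotient --- this is precisely where the $j(-1;q)$ in the first term of the statement comes from. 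One then solves for the string function by simple division, exactly as the present paper does for level $2/5$ at $z=i$ and $z=iq^{5}$ (Proposition~\ref{proposition:polarFinite25}, Proposition~\ref{proposition:weylKac25ell2rzVal}); for $j=1$ there is only one unknown string function, so not even a vanishing lemma is needed. Your substitute --- keeping $z$ general, shifting the Appell functions via the change-of-$z$ formula, and extracting Laurent coefficients in an annulus --- is not unsound in principle, but it reintroduces exactly the meromorphic debris and wall-dependence that the specialization eliminates, and your chosen $z$-free target $m(-q^{m},-q^{3};q^{6})$ (i.e.\ $z=1$, a pole of the character) is not where the standard third-order representations live; the relevant ones have second argument $q^{3}$, i.e.\ $z=-1$.

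The genuine gaps are the two steps you yourself flag and defer. First, the ``recognition step'' needs concrete identities of the type of Proposition~\ref{proposition:alternat10thAppellForms}, namely expressions of $2m(-q,q^{3};q^{6})$ and $2m(-q^{2},q^{3};q^{6})$ as $\omega_{3}(-q)$, resp.\ $\tfrac12 f_{3}(q^{2})$, up to explicit theta quotients and $q$-powers; asserting that the bases match does not produce the precise constants, signs, and theta corrections, and without these stated and proved (or cited) the two mock-theta terms of the theorem are not actually obtained. Second, after substituting such forms, what is left is a nontrivial pure theta identity (the analogue of Propositions~\ref{proposition:masterThetaIdentitypP512m0ell2r} and~\ref{proposition:masterThetaIdentitypP512m2ell2r}), which in this line of work is verified separately, e.g.\ by the Frye--Garvan machinery, for each residue $r$; ``compress the mixed-modulus products via the triple product and shift formulas'' plus checking leading coefficients does not establish it. So as written the proposal is a plausible plan whose two decisive computations are missing, and the general-$z$ coefficient-extraction scaffolding it erects in place of the $z=-1$ specialization adds labor without supplying them.
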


For the $2/3$-level string functions Borozenets and Mortenson had two families of identities.  We recall the floor function $\lfloor \cdot \rfloor$.  They showed
\begin{theorem}\label{theorem:newMockThetaIdentitiespP38m0ell2r}\cite[Theorem $2.7$]{BoMo2025P2} For $(p,p^{\prime})=(3,8)$, $(m,\ell)=(0,2r)$, $r\in\{0,1,2,3\}$, we have
\begin{align*}
(q)_{\infty}^3&\mathcal{C}_{0,2r}^{2/3}(q) 
=(-1)^{\lfloor (r+1)/2\rfloor}\cdot
\frac{q^{-r}}{2}\frac{J_{1}^2J_{2}}{J_{4}^2J_{8}}
j(-q^{7-2r};q^{16})j(q^{1+2r};q^{8})\\
&\qquad 
- q^{3-2r}    
\frac{j(q^{7-2r};q^{16})j(q^{30-4r};q^{32})}{J_{32}}
\omega_{3}(-q^{2}) 
 + q^{-r} 
\frac{j(q^{1+2r};q^{16})j(q^{18+4r};q^{32})}{J_{32}} 
 \frac{1}{2}f_{3}(q^4).
 \end{align*}
\end{theorem}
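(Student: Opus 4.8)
\quad The plan is to run the $j=2$ analogue of the argument producing the $(3,7)$ identity of Theorem~\ref{theorem:newMockThetaIdentitiespP37m0ell2r}, starting from the polar--finite decomposition of Theorem~\ref{theorem:generalPolarFinite} specialized to $p=3$, $p'=8$ (so $j=p'-2p=2$) and spin $\ell=2r$. With $p=3$ the inner sum over $m$ has only the terms $m=1,2$ and the outer sum over $s$ only $s=0,1$; after simplification the two $m$-terms produce, respectively, the $\omega_3(-q^2)$ piece and the $\tfrac12 f_3(q^4)$ piece of the identity, while the $s=0$ summand of the finite part carries the quantity $C_{0,2r}^{(3,8)}(q)=q^{s_{0,2r}^{2/3}}\mathcal C_{0,2r}^{2/3}(q)$ we want.

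First I would isolate $C_{0,2r}^{(3,8)}(q)$. Divide the specialized decomposition through by the theta factor $j(-q^{6}z^{2};q^{12})$, which occurs both in the finite part and in the $s=0$ block of the polar part, and take the coefficient of $z^{0}$ in the annulus carrying the string functions (cf.\ \eqref{equation:fourcoefexp}). For $j=2$ every $s=1$ contribution is odd in $z$: the finite-part summand is $z^{-1}q^{3/2}C_{2,2r}^{(3,8)}(q)\,j(-z^{2};q^{12})/j(-q^{6}z^{2};q^{12})$, and the Appell--Lerch arguments $-q^{12}z^{-2}$ and $-z^{2}$ occurring in the $s=1$ block are both even in $z$. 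Hence these drop out, leaving
\[
C_{0,2r}^{(3,8)}(q)=\big[z^{0}\big]\frac{\chi_{2r}^{(3,8)}(z;q)}{j(-q^{6}z^{2};q^{12})}\;-\;\big[z^{0}\big]B_{r}(z;q),
\]
where $B_{r}(z;q)$ is the $s=0$ block of the polar part of Theorem~\ref{theorem:generalPolarFinite} with the factor $j(-q^{6}z^{2};q^{12})$ stripped off --- an explicit finite sum over $m=1,2$ of theta functions at modulus $q^{48}$ times the Appell--Lerch combination $m(-q^{2m},-q^{6}z^{-2};q^{12})+m(-q^{2m},-q^{6}z^{2};q^{12})$ --- and the first term is read off directly from the Weyl--Kac formula \eqref{equation:WK-formula}.

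The core of the argument is then to evaluate these $z^{0}$-coefficients and recombine. Using the telescoping $j(z;q)\,m(x,z;q)=\sum_{r\in\Z}(-1)^{r}q^{\binom{r}{2}}z^{r}/(1-q^{r-1}xz)$, which clears the denominator theta in \eqref{equation:m-def}, together with the change-of-argument formula
\[
m(x,z_{1};q)-m(x,z_{0};q)=\frac{z_{0}\,(q)_\infty^{3}\,j(z_{1}/z_{0};q)\,j(xz_{0}z_{1};q)}{j(z_{0};q)\,j(z_{1};q)\,j(xz_{0};q)\,j(xz_{1};q)},
\]
I would bring the Appell--Lerch pieces of the $m=2$ term to the shape of the known Appell--Lerch representation of $\omega_3$ from \cite{HM}, producing $\omega_3(-q^2)$ times a theta quotient, and likewise the $m=1$ term to that of $f_3$, producing $\tfrac12 f_3(q^4)$ times a theta quotient --- the modulus $q^{12}$ thrown up by the decomposition is exactly the right one, since $(-q^{2})^{6}=(q^{4})^{3}=q^{12}$. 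The change-of-argument by-products, the leftover pieces of the two $z^{0}$-coefficients, and the Weyl--Kac term $\big[z^{0}\big]\big(\chi_{2r}^{(3,8)}(z;q)/j(-q^{6}z^{2};q^{12})\big)$ are then consolidated, via Jacobi-triple-product dissections and standard manipulations of $J_{a,b}$ and $\overline J_{a,b}$ (in particular the shift rule $j(q^{b}x;q^{b})=-x^{-1}j(x;q^{b})$), into the single product $(-1)^{\lfloor(r+1)/2\rfloor}\tfrac{q^{-r}}{2}\,\frac{J_{1}^{2}J_{2}}{J_{4}^{2}J_{8}}\,j(-q^{7-2r};q^{16})\,j(q^{1+2r};q^{8})$, checking the residues $r\in\{0,1,2,3\}$ one at a time.

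I expect the main obstacle to be precisely this final consolidation: showing that the mock pieces collapse to exactly $\omega_3(-q^2)$ and $f_3(q^4)$ with the stated theta prefactors, and that the remaining theta material --- change-of-argument by-products, the residue of $B_{r}(z;q)$, and the Weyl--Kac contribution --- collapses to the one clean product with the correct floor-sign, uniformly for all four values $r\in\{0,1,2,3\}$. The telescoping, the mock-theta identifications, and the coefficient extraction above are structurally routine; the delicate part is pinning down the exponents $7-2r$, $1+2r$, $30-4r$, $18+4r$ and the sign $(-1)^{\lfloor(r+1)/2\rfloor}$, which requires careful bookkeeping of the $r$-dependent reindexings, and one should expect the quasi-periodicity relation of Theorem~\ref{theorem:generalQuasiPeriodicity} to help streamline those simplifications.
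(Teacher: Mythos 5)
Your overall skeleton --- specialize Theorem \ref{theorem:generalPolarFinite} to $(p,j)=(3,2)$, isolate the quantum-number-zero string function, identify the two surviving Appell blocks with $\omega_3(-q^2)$ and $\tfrac12 f_3(q^4)$, and finish with a Frye--Garvan-style theta identity --- is the right one, but the device you use to isolate $C^{(3,8)}_{0,2r}$ is where the argument has a genuine gap. You divide by $j(-q^6z^2;q^{12})$ and extract the $z^0$-coefficient, and then treat $[z^0]\bigl(\chi^{(3,8)}_{2r}(z;q)/j(-q^6z^2;q^{12})\bigr)$ and $[z^0]B_r(z;q)$ as quantities that can be ``read off.'' They cannot: $\chi^{(3,8)}_{2r}(z;q)$ is a meromorphic Jacobi form (poles coming from the Weyl denominator $j(z;q)$), and dividing by a further theta function only adds poles, so its constant term in a chosen annulus is precisely the kind of Fourier coefficient of a meromorphic Jacobi form whose computation is the whole point of the polar--finite machinery of \cite{DMZ, Zw02}; likewise the constant term in $z$ of $m(-q^{2m},-q^6z^{\pm2};q^{12})$ is not a one-variable Appell specialization, and the change-of-argument formula you quote relates two values of the elliptic argument but performs no coefficient extraction, so it does not by itself deliver the $\omega_3$/$f_3$ identifications. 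In short, the two steps you label as routine are the unproved core of your plan.

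The actual proof (this statement is quoted from \cite{BoMo2025P2}; the same method is carried out in detail in this paper for the $2/5$-level analogues, Theorems \ref{theorem:newMockThetaIdentitiespP512m0ell2r} and \ref{theorem:newMockThetaIdentitiespP512m2ell2r}) avoids coefficient extraction entirely by specializing $z$ to the torsion point $z=i$. There the entire quantum-number-two block drops out, because the prefactor $z^{-1}j(-z^2;q^{12})$ vanishes while the accompanying Appell combination, though individually singular at $-z^2=1$, has vanishing product in the limit --- the content of the $(3,8)$ analogue of Lemma \ref{lemma:polarFinite25AppellVanish}; the surviving Appell functions collapse to the one-variable specializations $2m(-q^{2m},q^{6};q^{12})$, $m=1,2$, which are exactly the forms expressible through $\omega_3(-q^2)$ and $f_3(q^4)$ (the analogue of Proposition \ref{proposition:alternat10thAppellForms}); and the character value $\chi^{(3,8)}_{2r}(i;q)$ is literally a theta quotient by Proposition \ref{proposition:WeylKac}, with $1/j(i;q)=\tfrac{1+i}{2}\tfrac{J_2}{J_1J_4}$ producing the $J_1^2J_2/(J_4^2J_8)$ factor and the sign $(-1)^{\lfloor (r+1)/2\rfloor}$ (compare Proposition \ref{proposition:weylKac25ell2rzVal}), after which only a pure theta identity remains, proved by the methods of \cite{FG}. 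To salvage your route you would need genuine constant-term formulas for $\chi/j$ and for the two-variable $m$-functions, which is substantially harder than the specialization argument; replacing your $[z^0]$ step by the $z=i$ evaluation turns your outline into the known proof.
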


\begin{theorem}\label{theorem:newMockThetaIdentitiespP38m2ell2r}\cite[Theorem $2.8$]{BoMo2025P2}  For $(p,p^{\prime})=(3,8)$, $(m,\ell)=(2,2r)$, $r\in\{0,1,2,3\}$, we have
\begin{align*}
(q)_{\infty}^3\mathcal{C}_{2,2r}^{2/3}(q) 
&=(-1)^{\lfloor (r+1)/2\rfloor}\cdot 
\frac{q^{3-2r}}{2}\frac{J_{1}^2J_{2}}{J_{4}^2J_{32}}j(q^{2+4r};q^{32})j(q^{7-2r};q^{16})\\
&\qquad 
- q^{3-2r}   
\frac{j(q^{7-2r};q^{16})j(q^{30-4r};q^{32})}{J_{32}}
\left (1- \frac{1}{2}f_{3}(q^4) \right )  \\
&\qquad + q^{1-r} 
\frac{j(q^{1+2r};q^{16})j(q^{18+4r};q^{32})}{J_{32}} 
 \left (1-q^2\omega_{3}(-q^{2}) \right ).
\end{align*}
\end{theorem}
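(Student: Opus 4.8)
The plan is to read $\mathcal{C}_{2,2r}^{2/3}(q)$ off the polar-finite decomposition of the admissible character and then recognize the surviving Appell functions as third-order mock theta functions. First I would apply Theorem~\ref{theorem:generalPolarFinite} with $(p,p^{\prime})=(3,8)$, so that $j=2$ and $2jp=12$: this writes $\chi_{2r}^{(3,8)}(z;q)$ as an $s=0$ piece built from $C_{0,2r}^{(3,8)}(q)$ with theta factor $j(-z^{2}q^{6};q^{12})$, an $s=1$ piece built from $C_{2,2r}^{(3,8)}(q)$ with theta factor $j(-z^{2};q^{12})$, and Appell functions $m(x,z;q^{12})$. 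A short computation with the functional equations of $j(\cdot\,;q^{12})$ shows the $s=0$ piece contains only even powers of $z$ and the $s=1$ piece only odd powers, so that
\[
\tfrac12\big(\chi_{2r}^{(3,8)}(z;q)-\chi_{2r}^{(3,8)}(-z;q)\big)=q^{3/2}z^{-1}C_{2,2r}^{(3,8)}(q)\,j(-z^{2};q^{12})+T(z),
\]
where $T(z)$ is the $s=1$ Appell tail of Theorem~\ref{theorem:generalPolarFinite}.

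Next I would evaluate this at a convenient value $z=z_{0}$ with $z_{0}^{2}=-q^{6}$, say $z_{0}=iq^{3}$: then $z_{0}\notin q^{\Z}$ is not a pole of $\chi$, $j(-z_{0}^{2};q^{12})=j(q^{6};q^{12})\neq 0$, and the arguments of all Appell functions in $T(z_{0})$ collapse to powers of $q$, so $T(z_{0})$ becomes an honest $q$-series, a $\Z[[q]]$-combination of Appell--Lerch sums to base $q^{12}$. Solving for $C_{2,2r}^{(3,8)}(q)$ and converting to $\mathcal{C}_{2,2r}^{2/3}(q)$ via \eqref{equation:mathCalCtoStringC} then writes $(q)_{\infty}^{3}\mathcal{C}_{2,2r}^{2/3}(q)$ as a theta quotient times $\tfrac12\big(\chi_{2r}^{(3,8)}(z_{0};q)-\chi_{2r}^{(3,8)}(-z_{0};q)\big)$, plus a theta quotient times that $q$-series combination. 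I would compute the first term from the Weyl--Kac formula \eqref{equation:WK-formula}--\eqref{equation:SW-thetaDef} and the $\Theta$-to-$j$ conversion recalled above: it is a ratio of theta functions to bases $q^{4}$ and $q^{48}$, which by the Jacobi triple product \eqref{equation:JTPid}, the shift $j(q^{n}x;q)=(-1)^{n}q^{-\binom{n}{2}}x^{-n}j(x;q)$ and the inversion $j(x^{-1};q)=-x^{-1}j(x;q)$, reduces to the first summand $(-1)^{\lfloor(r+1)/2\rfloor}\tfrac{q^{3-2r}}{2}\tfrac{J_{1}^{2}J_{2}}{J_{4}^{2}J_{32}}j(q^{2+4r};q^{32})j(q^{7-2r};q^{16})$, the sign $(-1)^{\lfloor(r+1)/2\rfloor}$ being produced by the reduction of the $r$-dependent exponents together with the resolution of the factors $z_{0}^{\pm(2r+1)/2}$ as $r$ runs over $\{0,1,2,3\}$. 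For the remaining part I would use the Appell--Lerch representations of $f_{3}$ and $\omega_{3}$ and the functional equations of $m(x,z;q)$ from \cite{HM} --- its quasi-periodicities in $x$ and $z$ and the change-of-$z$ identity --- to identify the $q$-series combination with the two expressions built from $1-\tfrac12 f_{3}(q^{4})$ and $1-q^{2}\omega_{3}(-q^{2})$, both expressible via Appell functions to base $q^{12}$; the additive constant $1$ in each comes from the inhomogeneous terms in the $x$-shift and the change-of-$z$ identity for $m$, and collecting the accompanying theta factors gives the last two summands.

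The main obstacle is this final bundle of theta manipulations: deciding which of the Appell functions (over $m\in\{1,2\}$ and over the two summands inside $T(z)$) contribute to the $f_{3}(q^{4})$-term versus the $\omega_{3}(-q^{2})$-term, fixing the additive constants, and collapsing the resulting products of theta functions to bases $q^{12},q^{16},q^{32},q^{48}$ down to the compact quotients $\tfrac{j(q^{7-2r};q^{16})j(q^{30-4r};q^{32})}{J_{32}}$ and $\tfrac{j(q^{1+2r};q^{16})j(q^{18+4r};q^{32})}{J_{32}}$, uniformly in $r$ --- a long but essentially mechanical computation. A useful check is that the companion identity for $\mathcal{C}_{0,2r}^{2/3}(q)$ in Theorem~\ref{theorem:newMockThetaIdentitiespP38m0ell2r} comes out of the same procedure with $z_{0}$ chosen instead to kill the $s=1$ factor $j(-z^{2};q^{12})$ (e.g. $z_{0}^{2}=-1$), and the complementary way in which $\omega_{3}(-q^{2})$ and $\tfrac12 f_{3}(q^{4})$ appear there cross-checks the constants and signs.
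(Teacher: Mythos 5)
Your outline follows the same architecture that this paper uses for its $2/5$-level analogues (and that \cite{BoMo2025P2} uses for the statement itself): specialize the polar-finite decomposition of Theorem~\ref{theorem:generalPolarFinite} at a point with $z^{2}=-q^{p(j-2s)}$ to isolate the quantum-number-two string function, evaluate the character there via Proposition~\ref{proposition:WeylKac}, convert the surviving base-$q^{2pj}$ Appell functions to classical mock theta functions, and finish with a theta-function identity. Two points of comparison are worth recording. First, your parity step --- extracting $\tfrac12(\chi(z)-\chi(-z))$ before setting $z=iq^{3}$ --- is a genuine (small) deviation: it kills the $s=0$ block identically in $z$, so you never face the removable singularity that the paper must handle with a vanishing-limit lemma (the $(3,8)$ analogue of Lemma~\ref{lemma:polarFinite25AppellVanish}, where the vanishing theta factor must be paired against the Appell tail that becomes singular at the specialization point); this buys a slightly cleaner isolation at the cost of evaluating the character at both $\pm iq^{3}$, and your check that the $s=1$ Appell arguments land at $q^{6}$ (so $T(iq^{3})$ is a regular $q$-series) is exactly right. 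Second, be careful with your attribution of the first summand: in the paper's $2/5$-level proofs the character value at the special point is \emph{not} equal to the leading theta summand of the final theorem; the Appell-to-mock-theta conversions (Proposition~\ref{proposition:alternat10thAppellForms}) carry their own theta-quotient corrections, and the leading summand only emerges after these are recombined with the character value through a separately stated master identity (Propositions~\ref{proposition:masterThetaIdentitypP512m0ell2r}--\ref{proposition:masterThetaIdentitypP512m2ell2r}, proved by the Frye--Garvan machinery \cite{FG}). The same bookkeeping is to be expected in the $(3,8)$ case with the third-order Appell representations, so the ``main obstacle'' you flag is precisely where the paper inserts a Frye--Garvan-verified theta identity rather than a purely mechanical collapse; with that caveat your plan is sound and would reproduce the stated identity.
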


\smallskip
While on a visit to Cambridge University in the spring of $1976$, George Andrews discovered a long-forgotten collection of  Ramanujan's notes, which contained hundreds of identities without proofs.  The notes have subsequently acquired the name Ramanujan's Lost Notebook  and among the unproven identities were ten formulas relating the fifth-order mock theta functions, now called the (ex-)mock theta conjectures, which are ten identities where each identity expresses a different fifth-order mock theta function in terms of a building block and a single quotient of theta functions.  This particular building block is the so-called universal mock theta function $g(x;q)$, which is defined
\begin{equation*}
g(x;q):=x^{-1}\Big ( -1 +\sum_{n=0}^{\infty}\frac{q^{n^2}}{(x;q)_{n+1}(q/x;q)_{n}} \Big ).
\end{equation*}
Using our notation, two of the ten (ex-)mock theta conjectures read \cite{H1}:
\begin{align*}
f_0(q)&:=\sum_{n= 0}^{\infty}\frac{q^{n^2}}{(-q;q)_n}=\frac{j(q^5;q^{10})j(q^2;q^5)}{j(q;q^3)}-2q^2g(q^2;q^{10}),\\
f_1(q)&:=\sum_{n= 0}^{\infty}\frac{q^{n(n+1)}}{(-q;q)_n}=\frac{j(q^5;q^{10})j(q;q^5)}{j(q;q^3)}-2q^3g(q^4;q^{10}).
\end{align*}

Many identities found in the lost notebook, such as those for the tenth-order mock theta functions, involve a spectacular linear combination of mock theta functions which are evaluated in terms of a single quotient of theta functions.   We recall the two of the four tenth-order mock theta functions \cite{C1, C2, C3, RLN}
\begin{align*}
{\phi}_{10}(q)&:=\sum_{n\ge 0}\frac{q^{\binom{n+1}{2}}}{(q;q^2)_{n+1}}, \ \ {\psi}_{10}(q):=\sum_{n\ge 0}\frac{q^{\binom{n+2}{2}}}{(q;q^2)_{n+1}}.
\end{align*}
The four functions satisfy six identities, each of which involves a single quotient of theta functions.  Letting $\omega$ be a primitive third-root of unity.  One of the six identities reads \cite{C1}
\begin{gather*}
q^{2}\phi_{10}(q^9)-\frac{\psi_{10}(\omega q)-\psi_{10}(\omega^2 q)}{\omega - \omega^2}
=-q\frac{j(q;q^2)}{j(q^3;q^6)}\cdot \frac{j(q^3;q^{15})j(q^{6};q^{18})}{j(q^3;q^9)}.
\end{gather*}
with the other five being similar \cite{C1, C2, C3, RLN}.   What led Ramanujan to such identities is an ongoing mystery. 

\smallskip
Mortenson \cite{Mo24BLMS} discovered three new identities for the sixth-order mock theta functions that are in the spirit of the tenth-order identities and also found and proved nineteen tenth-order like identities for second, sixth, and eighth-order mock theta functions.   Mortenson and Urazov \cite{MoUr24} obtained several families of tenth-order like identities but for Appell functions. 

\smallskip
To further motivate our new results, we recall the $1/5$-level string functions identities \cite{BoMo2025P2}:
\begin{theorem} \label{theorem:newMockThetaIdentitiespP511m0ell2r} \cite[Theorem 2.9]{BoMo2025P2} For $(p,p^{\prime})=(5,11)$, $(m,\ell)=(0,2r)$, $r\in\{0,1,2,3,4\}$, we have
\begin{align*}
(q)_{\infty}^3\mathcal{C}_{0,2r}^{1/5}(q)
&=-q^{r^2-3r+1}J_{1,2}j(q^{4+8r};q^{22})
 \\
&\qquad -
q^{6-4r} \times \left ( j(-q^{16+10r};q^{110}) 
-q^{4+8r}j(-q^{6-10r};q^{110})\right )
 \times q\phi_{10}(-q) \\
& \qquad +q^{3-3r}\times \left ( j(-q^{27+10r};q^{110}) 
-q^{3+6r}j(-q^{17-10r};q^{110})\right )
\times \chi_{10}(q^2) \\
&\qquad 
-q^{1-2r}  \times \left ( j(-q^{38+10r};q^{110}) 
-q^{2+4r}j(-q^{28-10r};q^{110})\right )
\times \left ( -\psi_{10}(-q)\right ) \\
&\qquad  
+q^{-r} \times \left ( j(-q^{49+10r};q^{110}) 
-q^{1+2r}j(-q^{39-10r};q^{110})\right )\times X_{10}(q^2).
\end{align*}
\end{theorem}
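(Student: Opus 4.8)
The plan is to read the identity off the $j=1$ polar--finite decomposition. For $(p,p')=(5,11)$ one has $p'=2p+1$ with $p=5$, so Corollary~\ref{corollary:polarFinite1p} applies directly: it writes $\chi_{2r}^{(5,11)}(z;q)$ as the ``polar'' term $C_{0,2r}^{(5,11)}(q)\,j(-q^{5}z;q^{10})$ plus a ``finite'' part equal to $(-1)^{5}q^{-1/8+5(2r+1)^{2}/44+\binom{5}{2}-5r}\,j(-q^{5}z;q^{10})/(q)_{\infty}^{3}$ times $\sum_{m=1}^{4}(-1)^{m}q^{\binom{m+1}{2}+m(r-5)}\bigl(j(-q^{11m+5(2r+1)};q^{110})-q^{2m(5-r)}j(-q^{-11m+5(2r+1)};q^{110})\bigr)\bigl(m(-q^{m},-q^{5}z;q^{10})+m(-q^{m},-q^{5}z^{-1};q^{10})\bigr)$. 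The theta factors here already coincide, after routine uses of $j(-q^{a};q^{b})=q^{a}j(-q^{a+b};q^{b})$ and $j(x;q)=j(q/x;q)$, with $j(-q^{16+10r};q^{110})-q^{4+8r}j(-q^{6-10r};q^{110})$ and its siblings in the statement; and for $m=1,2,3,4$ the $q$-powers, once the global $q^{1/8-5(2r+1)^{2}/44}$ is cleared at the end, produce exactly the coefficients $q^{6-4r},q^{3-3r},q^{1-2r},q^{-r}$ of the four mock-theta terms. So the decomposition already ``knows'' the shape of the answer; what is left is to evaluate the $z$-dependence and to name the four Appell functions.

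First I would solve the decomposition for $C_{0,2r}^{(5,11)}(q)\,j(-q^{5}z;q^{10})$, substitute for $\chi_{2r}^{(5,11)}(z;q)$ the Weyl--Kac quotient \eqref{equation:WK-formula} with the theta functions rewritten via $\Theta_{n,m}(z;q)=z^{-n/2}q^{n^{2}/(4m)}j(-q^{n+m}z^{-m};q^{2m})$, and divide through by the common factor $j(-q^{5}z;q^{10})$. Since $C_{0,2r}^{(5,11)}(q)$ is free of $z$, the right-hand side must be $z$-independent. I would make this explicit with the Hickerson--Mortenson ``change of $z$'' formula for $m(x,z;q)$, which rewrites each $m(-q^{m},-q^{5}z^{\pm1};q^{10})$ as a single fixed value $m(-q^{m},z_{0};q^{10})$ plus an explicit theta quotient in $z$. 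The leftover $z$-dependent theta quotients, together with the quotient of Weyl--Kac theta functions $\sum_{\sigma}\sigma\Theta_{\sigma,2}(z;q)$, should then collapse --- through the Jacobi triple product and the quintuple product identity --- to the single quotient $-q^{r^{2}-3r+1}J_{1,2}\,j(q^{4+8r};q^{22})$ of the statement.

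It then remains to recognize the $z$-free Appell values $m(-q^{m},z_{0};q^{10})$, $m=1,2,3,4$, as mock theta functions. I would convert each to the universal mock theta function via the standard $m\leftrightarrow g$ dictionary of \cite{HM}, and then use the known expressions of the four tenth-order functions in terms of $g$ (equivalently, their Hecke-type double-sum/Appell representations) to identify the $m=1,2,3,4$ contributions with $q\phi_{10}(-q)$, $\chi_{10}(q^{2})$, $-\psi_{10}(-q)$, $X_{10}(q^{2})$ respectively, each with a theta prefactor that simplifies to the one displayed; the split between argument $-q$ for odd $m$ and argument $q^{2}$ for even $m$ (where $-q^{m}$ is an even power of $q$, permitting a base change to $q^{2}$) is the structural reason all four tenth-order functions appear. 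Multiplying by $q^{1/8-5(2r+1)^{2}/44}(q)_{\infty}^{3}$ then turns $C_{0,2r}^{(5,11)}(q)$ into $(q)_{\infty}^{3}\mathcal{C}_{0,2r}^{1/5}(q)$ through \eqref{equation:mathCalCtoStringC} and clears the fractional prefactor, and one checks the remaining $r$-dependent exponents.

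The hard parts are the two bookkeeping steps. First, one must show that \emph{all} of the $z$-dependent theta material --- from the Weyl--Kac quotient and from every one of the ``change of $z$'' corrections --- telescopes to exactly one theta quotient; this is delicate, very sign-sensitive triple- and quintuple-product algebra. Second, and more importantly, one must match the four $z$-free Appell functions to precisely $\phi_{10},\psi_{10},\chi_{10},X_{10}$ with the correct arguments and theta coefficients: the polar--finite machinery guarantees \emph{some} mock expression, but that it organizes into exactly these four classical tenth-order functions is the real content of the theorem. Everything else is, in principle, mechanical.
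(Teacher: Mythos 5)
Your outline starts from the right place (Corollary~\ref{corollary:polarFinite1p} with $p=5$, $p'=11$, solve for the polar coefficient, substitute the Weyl--Kac character, and identify the surviving Appell functions with the four tenth-order functions), and your bookkeeping of the coefficients $q^{6-4r},q^{3-3r},q^{1-2r},q^{-r}$ is consistent with the decomposition. But the two steps you explicitly defer are not peripheral bookkeeping; they are the entire content, and the way you propose to handle the first one diverges from the route actually used. The source proof (this is \cite[Theorem 2.9]{BoMo2025P2}; the present paper only cites it, but proves its $2/5$-level analogues by the same scheme in Section~\ref{section:newResults}) never works at generic $z$ with the Hickerson--Mortenson change-of-$z$ formula. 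Instead it specializes $z$ to a torsion point --- the analogue of the $z=i$ and $z=iq^{5}$ specializations in Proposition~\ref{proposition:polarFinite25} and Lemma~\ref{lemma:polarFinite25AppellVanish}; here $z=-1$, so that $-q^{5}z=-q^{5}z^{-1}=q^{5}$ --- which makes the Appell pair collapse to a single doubled value $2m(-q^{m},q^{5};q^{10})$ and, crucially, makes the character itself collapse via the Jacobi triple product into one explicit theta quotient (the analogue of Proposition~\ref{proposition:weylKac25ell2rzVal}). Your plan replaces this with a claim that four change-of-$z$ theta corrections plus the Weyl--Kac quotient ``should then collapse'' at generic $z$; you give no argument for that telescoping, and it is strictly harder than the specialized statement it would have to reduce to.

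The second deferred step also hides a real ingredient: the identification of $2m(-q^{m},q^{5};q^{10})$, $m=1,\dots,4$, with $q\phi_{10}(-q)$, $\chi_{10}(q^{2})$, $-\psi_{10}(-q)$, $X_{10}(q^{2})$ is not an exact match via the $m\leftrightarrow g$ dictionary --- each identification carries an explicit theta-quotient correction term (this is exactly Proposition~\ref{proposition:alternat10thAppellForms} before the substitution $q\to q^{2}$, i.e.\ \cite[Proposition 5.2]{BoMo2025P2}). Those four correction terms, together with the character evaluation, must be shown to combine into the single quotient $-q^{r^{2}-3r+1}J_{1,2}\,j(q^{4+8r};q^{22})$, and that residual theta identity (the analogue of Propositions~\ref{proposition:masterThetaIdentitypP512m0ell2r} and~\ref{proposition:masterThetaIdentitypP512m2ell2r}) is proved by the Frye--Garvan machinery, not by inspection. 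As written, your proposal asserts the conclusion of both of these steps without supplying either the specialization device that makes them tractable or any proof of the master theta identity, so it does not yet constitute a proof.
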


\smallskip
Now we can state our new results.  For the $2/5$-level string functions we have two families of results, which are similar in form to \cite[Theorem 2.9]{BoMo2025P2}.
\begin{theorem}\label{theorem:newMockThetaIdentitiespP512m0ell2r} For $(p,p^{\prime})=(5,12)$, $(m,\ell)=(0,2r)$, $r\in\{0,1,2,3,4,5\}$, we have
\begin{align*}
(q)_{\infty}^3\mathcal{C}_{0,2r}^{2/5}(q)
&=-q^{\frac{1}{2}r^2-\frac{5}{2}r+1}j(q^{2+4r};q^{12})j(-q^{1+2r};q^{8})\frac{J_{1}}{J_{4}}\\
&\qquad   - q^{6-4r}
    \times\Big (  j(-q^{17+10r};q^{120} )
 -  q^{11-2r}j(-q^{-7+10r};q^{120})\Big )
     \times q^2\phi_{10}(-q^2)\\
&\qquad     +q^{3-3r}
    \times\Big (  j(-q^{29+10r};q^{120} )
 -  q^{22-4r}j(-q^{-19+10r};q^{120})\Big )
     \times \chi_{10}(q^4)\\
&\qquad   -q^{1-2r}
  \times\Big (  j(-q^{41+10r};q^{120} )
 -  q^{33-6r}j(-q^{-31+10r};q^{120})\Big )
    \times \left ( -\psi_{10}(-q^2)\right)\\
&\qquad     +q^{-r}
   \times\Big (  j(-q^{53+10r};q^{120} )
 -  q^{44-8r}j(-q^{-43+10r};q^{120})\Big )
    \times X_{10}(q^4). 
\end{align*}
\end{theorem}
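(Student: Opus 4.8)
The plan is to specialize the general polar--finite decomposition of Theorem~\ref{theorem:generalPolarFinite} to $(p,p^{\prime})=(5,12)$, i.e.\ $p=5$, $j=2$ (admissible level $N=2/5$), and to read off $C^{2/5}_{0,2r}(q)$ from it, following the route that in the $j=1$ case (Corollary~\ref{corollary:polarFinite1p}) produces the $1/5$-level family of Theorem~\ref{theorem:newMockThetaIdentitiespP511m0ell2r}. With $p=5$, $j=2$ the ``polar'' sum runs over $s\in\{0,1\}$, contributing $C^{2/5}_{0,2r}(q)\,j(-z^{2}q^{10};q^{20})$ for $s=0$ and $z^{-1}q^{5/2}C^{2/5}_{2,2r}(q)\,j(-z^{2};q^{20})$ for $s=1$; the ``finite'' sum runs over $s\in\{0,1\}$ and $m\in\{1,2,3,4\}$, and for $s=0$ is built from exactly the four modulus-$q^{120}$ theta differences $j(-q^{12m+5(2r+1)};q^{120})-q^{12m-m(2r+1)}j(-q^{-12m+5(2r+1)};q^{120})$ (already the ones appearing in the statement), multiplied by $j(-z^{2}q^{10};q^{20})$ and by the Appell pair $m(-q^{2m},-q^{10}z^{-2};q^{20})+m(-q^{2m},-q^{10}z^{2};q^{20})$ at modulus $q^{20}$. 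Dividing through by $j(-z^{2}q^{10};q^{20})$ isolates $C^{2/5}_{0,2r}(q)$; the exponents $q^{6-4r}$, $q^{3-3r}$, $q^{1-2r}$, $q^{-r}$ then emerge from the prefactor $(-1)^{p}q^{-1/8+p(2r+1)^{2}/4(2p+j)+\binom{p}{2}-pr}\,(-1)^{m}q^{\binom{m+1}{2}+m(r-p)}$ after multiplication by $q^{-s^{2/5}_{0,2r}}(q)_{\infty}^{3}$, leaving an identity, valid for all admissible $z$, which expresses $(q)_{\infty}^{3}\mathcal{C}^{2/5}_{0,2r}(q)$ through $\chi^{2/5}_{2r}(z;q)/j(-z^{2}q^{10};q^{20})$, the companion $C^{2/5}_{2,2r}(q)$, and modulus-$q^{20}$ Appell functions.

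Next I would specialize $z$ so that $j(-z^{2};q^{20})$ vanishes, e.g.\ $z^{2}=-1$; one checks that $\chi^{2/5}_{2r}$ is regular there, since the Weyl--Kac denominator in \eqref{equation:WK-formula} then reduces, via $\Theta_{n,m}(z;q)=z^{-n/2}q^{n^{2}/4m}j(-q^{n+m}z^{-m};q^{2m})$ and $j(q^{3};q^{4})=j(q;q^{4})$, to a nonzero multiple of $1-z$. At such a $z$ the $s=1$ polar term, hence the dependence on $C^{2/5}_{2,2r}(q)$, drops out; the two Appell functions in each $s=0$ term coincide and become $2\,m(-q^{2m},q^{10};q^{20})$; and the $s=1$ finite term, of indeterminate form $0\cdot\infty$, contributes a theta function in the limit. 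The term coming from $\chi^{2/5}_{2r}(z;q)/j(-z^{2}q^{10};q^{20})$, evaluated through \eqref{equation:WK-formula} and collapsed by the Jacobi triple product \eqref{equation:JTPid}, together with this residual theta, yields the single quotient $-q^{\frac{1}{2}r^{2}-\frac{5}{2}r+1}\,j(q^{2+4r};q^{12})\,j(-q^{1+2r};q^{8})\,J_{1}/J_{4}$. Finally, using the Appell--Lerch ($m$-function) representations of the four tenth-order mock theta functions $\phi_{10}$, $\psi_{10}$, $\chi_{10}$, $X_{10}$ from \cite{C1,C2,C3,RLN} together with the changing-$z$ and changing-$q$ (parity-dissection) identities for $m(x,z;q)$ from \cite{HM}, one identifies $m(-q^{2},q^{10};q^{20})$, $m(-q^{4},q^{10};q^{20})$, $m(-q^{6},q^{10};q^{20})$, $m(-q^{8},q^{10};q^{20})$ with $q^{2}\phi_{10}(-q^{2})$, $\chi_{10}(q^{4})$, $-\psi_{10}(-q^{2})$, $X_{10}(q^{4})$ respectively, up to theta-function factors and simple powers of $q$, which reproduces the four mock-theta terms of the statement.

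The main obstacle is this last identification. The Appell functions supplied by the decomposition live at modulus $q^{20}$, while the natural $m$-representations of the tenth-order functions at arguments $q^{2}$ and $q^{4}$ sit at moduli $q^{20}$ and $q^{40}$, so matching the $\chi_{10}(q^{4})$ and $X_{10}(q^{4})$ contributions forces a parity dissection of the modulus-$q^{20}$ Appell functions; and that dissection, together with each application of the changing-$z$ formula, emits theta terms that must all be reconciled with, and absorbed into, the single quotient $j(q^{2+4r};q^{12})j(-q^{1+2r};q^{8})J_{1}/J_{4}$ and the four modulus-$q^{120}$ theta differences. Carefully tracking every sign and every power of $q$ through these reductions, treating the six residue classes $r\in\{0,1,2,3,4,5\}$ separately (the range being dictated by the even admissible weights for $p^{\prime}=12$), and checking the leading $q$-coefficients numerically, completes the proof.
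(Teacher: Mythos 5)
Your overall route is the same as the paper's: specialize Theorem \ref{theorem:generalPolarFinite} to $(p,j)=(5,2)$ (this is Proposition \ref{proposition:polarFinite25}), send $z\to i$ so that $j(-z^{2};q^{20})=j(1;q^{20})=0$ kills the $s=1$ polar term and turns each $s=0$ Appell pair into $2m(-q^{2m},q^{10};q^{20})$, evaluate the character at $z=i$ (Proposition \ref{proposition:weylKac25ell2rzVal}), and trade the modulus-$q^{20}$ Appell functions for the four tenth-order functions plus theta-quotient corrections (Proposition \ref{proposition:alternat10thAppellForms}). However, two steps you pass over are exactly where the content lies. First, the $s=1$ \emph{finite} term at $z\to i$ is a $0\cdot\infty$ form, and you assert it ``contributes a theta function in the limit'' which then helps produce the quotient $-q^{\frac12 r^{2}-\frac52 r+1}j(q^{2+4r};q^{12})j(-q^{1+2r};q^{8})J_{1}/J_{4}$. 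That limit has to be computed: using (\ref{equation:mxqz-flip}) and (\ref{equation:mxqz-fnq-z}) the two Appell functions leave residual pieces $J_{20}^{3}/j(-q^{-2m+10};q^{20})$ and $J_{20}^{3}/j(-q^{2m+10};q^{20})$ which cancel exactly, so the contribution is zero (Lemma \ref{lemma:polarFinite25AppellVanish}, identity (\ref{equation:m0AppellVanish})); it is not a nonzero theta feeding the quotient. Your accounting of where the quotient comes from is therefore off: the character value at $z=i$ is $(-1)^{\kappa(r)}q^{-\frac18+\frac{5(2r+1)^{2}}{48}}j(-q^{10r+65};q^{120})J_{2}/(J_{1}J_{4})$, not the stated quotient, and the quotient only emerges after the theta corrections from Proposition \ref{proposition:alternat10thAppellForms} are folded in.

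Second, and more seriously, once the mock theta functions are split off, what remains to be shown is a bona fide theta-function identity for each $r$, namely the equality of $-q^{\frac12 r^{2}-\frac52 r+1}j(q^{2+4r};q^{12})j(-q^{1+2r};q^{8})J_{1}/J_{4}$ with the character-value term plus the four theta-quotient corrections weighted by the modulus-$q^{120}$ theta differences; this is Proposition \ref{proposition:masterThetaIdentitypP512m0ell2r}. You propose to settle it by ``carefully tracking every sign and every power of $q$'' and ``checking the leading $q$-coefficients numerically.'' The identity is not a formal consequence of the preceding manipulations, and a finite numerical check proves nothing unless it is accompanied by a valence (Sturm-type) bound for the relevant modular functions --- which is precisely what the Frye--Garvan machinery \cite{FG} supplies and what the paper invokes at this point. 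As written, the decisive theta identity is asserted rather than proved, so the proposal has a genuine gap even though its architecture matches the paper's proof. (Your identification of $2m(-q^{2m},q^{10};q^{20})$ with $q^{2}\phi_{10}(-q^{2})$, $\chi_{10}(q^{4})$, $-\psi_{10}(-q^{2})$, $X_{10}(q^{4})$ up to theta quotients is fine in principle; the paper imports it from \cite{BoMo2025P2} with $q\to q^{2}$.)
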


\begin{theorem}\label{theorem:newMockThetaIdentitiespP512m2ell2r} For $(p,p^{\prime})=(5,12)$, $(m,\ell)=(2,2r)$, $r\in\{0,1,2,3,4,5\}$, we have
   \begin{align*}
(q)_{\infty}^3\mathcal{C}_{2,2r}^{2/5}(q)
&=q^{\frac{1}{2}r^2-\frac{3}{2}r+3}j(q^{2+4r};q^{12})j(-q^{5+2r};q^8)\frac{J_{1}}{J_{4}}\\
&\qquad  -q^{10-4r}     \times\left (  j(-q^{17+10r};q^{120} )
 -  q^{11-2r}j(-q^{-7+10r};q^{120})\right )   \times
\left (1-X_{10}(q^4)\right )\\
&\qquad    +q^{6-3r} 
    \times\left (  j(-q^{29+10r};q^{120} )
 -  q^{22-4r}j(-q^{-19+10r};q^{120})\right )
     \times
\left (1+\psi_{10}(-q^2)\right )\\
&\qquad     -q^{3-2r}    \times\left (  j(-q^{41+10r};q^{120} )
 -  q^{33-6r}j(-q^{-31+10r};q^{120})\right )
   \times
\left (1-\chi_{10}(q^4)\right )\\
&\qquad   +q^{1-r} 
    \times\left (  j(-q^{53+10r};q^{120} )
 -  q^{44-8r}j(-q^{-43+10r};q^{120})\right )
     \times
\left (1-q^2\phi_{10}(-q^2)\right ). 
 \end{align*}
\end{theorem}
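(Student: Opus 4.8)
The plan is to deduce Theorem~\ref{theorem:newMockThetaIdentitiespP512m2ell2r} from Theorem~\ref{theorem:newMockThetaIdentitiespP512m0ell2r} via a cross-spin identity at level $2/5$, in the spirit of the $j=1$ cross-spin identity \cite[Theorem $2.2$]{BoMo2025P2} recalled above. The first step is to establish an identity of the shape
\begin{equation*}
(q)_{\infty}^{3}\,\mathcal{C}_{2,2r}^{2/5}(q)=q^{a(r)}\,(q)_{\infty}^{3}\,\mathcal{C}_{0,2(5-r)}^{2/5}(q)+\Xi_{r}(q),
\end{equation*}
where $a(r)$ is an explicit exponent (linear in $r$) and $\Xi_{r}(q)$ is a \emph{pure} $\Z$-linear combination of theta functions (no mock pieces), built from the $j(\pm q^{\bullet};q^{120})$ that appear in Theorem~\ref{theorem:newMockThetaIdentitiespP512m0ell2r} together with $j(q^{2+4r};q^{12})$, $j(-q^{5+2r};q^{8})$ and $J_{1}/J_{4}$. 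To obtain this I would start from Theorem~\ref{theorem:generalPolarFinite} with $(p,p^{\prime})=(5,12)$, so $j=2$: the decomposition writes $\chi_{2r}^{(5,12)}(z;q)$ as a combination of the two index-one theta functions $j(-z^{2}q^{10};q^{20})$ and $z^{-1}j(-z^{2};q^{20})$ whose coefficients are the string functions $C_{0,2r}^{2/5}$, $C_{2,2r}^{2/5}$ plus $(q)_{\infty}^{-3}$ times Appell functions $m(\cdot,\cdot z^{\pm 2};q^{20})$. Using the change-of-$z$ relation
\begin{equation*}
m(x,z_{1};q)-m(x,z_{0};q)=\frac{z_{0}\,(q)_{\infty}^{3}\,j(z_{1}/z_{0};q)\,j(xz_{0}z_{1};q)}{j(z_{0};q)\,j(z_{1};q)\,j(xz_{0};q)\,j(xz_{1};q)}
\end{equation*}
of \cite{HM} to bring all four Appell functions to a common second argument, and invoking the symmetry of the admissible characters that relabels the spin by $r\mapsto \tfrac{p^{\prime}}{2}-1-r=5-r$, I would subtract the resulting form of the decomposition for $\chi_{2r}^{(5,12)}$ from a suitably $q$-twisted copy of the decomposition for $\chi_{2(5-r)}^{(5,12)}$; by construction every Appell function cancels, leaving exactly the relation above with $\Xi_{r}$ the pure-theta remainder.

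Given the cross-spin identity, I would substitute Theorem~\ref{theorem:newMockThetaIdentitiespP512m0ell2r} for $(q)_{\infty}^{3}\mathcal{C}_{0,2(5-r)}^{2/5}(q)$. This brings in precisely the four tenth-order functions $\phi_{10}(-q^{2})$, $\chi_{10}(q^{4})$, $\psi_{10}(-q^{2})$, $X_{10}(q^{4})$ and the theta-quotient leading term of Theorem~\ref{theorem:newMockThetaIdentitiespP512m0ell2r}, all evaluated at $5-r$. Under $r\mapsto 5-r$ together with the twist $q^{a(r)}$, the four theta-difference expressions $j(-q^{12k+5+10r};q^{120})-q^{k(11-2r)}j(-q^{-12k+5+10r};q^{120})$, $k=1,2,3,4$, that multiply the mock functions in Theorem~\ref{theorem:newMockThetaIdentitiespP512m0ell2r} get permuted by the reflection $k\leftrightarrow 5-k$ — i.e.\ $1\leftrightarrow 4$ and $2\leftrightarrow 3$ — through repeated use of the shift rule $j(q^{n}x;q)=(-1)^{n}q^{-\binom{n}{2}}x^{-n}j(x;q)$ and the inversion $j(x;q)=j(q/x;q)$. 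This is exactly what carries $X_{10}(q^{4})$ into the first slot, $\psi_{10}(-q^{2})$ into the second (with the sign change that turns $-\psi_{10}$ into $+\psi_{10}$), $\chi_{10}(q^{4})$ into the third, and $q^{2}\phi_{10}(-q^{2})$ into the fourth, matching the claimed statement.

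It then remains to show that $\Xi_{r}(q)$, after absorbing the transported leading term of Theorem~\ref{theorem:newMockThetaIdentitiespP512m0ell2r}, reorganizes into $q^{\frac12 r^{2}-\frac32 r+3}j(q^{2+4r};q^{12})j(-q^{5+2r};q^{8})J_{1}/J_{4}$ together with, for each $k$, a summand $\pm q^{b_{k}(r)}\bigl(j(-q^{12k+5+10r};q^{120})-q^{k(11-2r)}j(-q^{-12k+5+10r};q^{120})\bigr)$; these last summands are exactly the ``$1$''s that upgrade each transported mock function $f$ to $1\mp f$ in the final formula. Here one splits each modulus-$q^{120}$ theta function into residue classes and recombines using the Jacobi triple product \eqref{equation:JTPid} and standard theta-function identities, and one checks that the collected quadratic and linear exponents produce exactly the prefactors $-q^{10-4r}$, $+q^{6-3r}$, $-q^{3-2r}$, $+q^{1-r}$.

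\textbf{Main obstacle.} The crux is establishing the cross-spin identity with its \emph{exact} exponent $a(r)$, sign and remainder $\Xi_{r}$: one must control how passing between $\chi_{2r}^{(5,12)}$ and $\chi_{2(5-r)}^{(5,12)}$ interacts simultaneously with the two distinct index-one theta functions in the finite part of Theorem~\ref{theorem:generalPolarFinite} and with the pair of Appell functions in the mock part, forcing the latter to cancel in pairs via the change-of-$z$ relation. Once that is in hand, collapsing the resulting forest of modulus-$q^{120}$ theta functions down to the compact form in the statement — and in particular verifying that the pure-theta remainder supplies precisely the ``$1$'' inside each $1\mp f$ — is lengthy but routine theta-function bookkeeping. (Alternatively, one could try to prove the theorem directly from Theorem~\ref{theorem:generalPolarFinite} by re-expanding the Appell functions $m(\cdot,\cdot z^{\pm 2};q^{20})$ via their Appell--Lerch series and matching the coefficient of the theta function $z^{-1}j(-z^{2};q^{20})$, but the cross-spin route leverages Theorem~\ref{theorem:newMockThetaIdentitiespP512m0ell2r} and seems shorter.)
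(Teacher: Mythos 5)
Your route has a genuine gap at its central step, namely the level-$2/5$ cross-spin identity itself. The structural bookkeeping you describe is in fact consistent: writing $\Theta_k(r):=j(-q^{12k+5(2r+1)};q^{120})-q^{k(11-2r)}j(-q^{-12k+5(2r+1)};q^{120})$, one checks with \eqref{equation:j-elliptic} and \eqref{equation:j-flip} that $\Theta_k(5-r)=\Theta_{5-k}(r)$, that the twist must be $a(r)=15-5r$, and that $q^{15-5r}$ times the leading quotient of Theorem \ref{theorem:newMockThetaIdentitiespP512m0ell2r} at $5-r$ does become $q^{\frac12 r^2-\frac32 r+3}j(q^{2+4r};q^{12})j(-q^{5+2r};q^8)J_1/J_4$. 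But the identity you actually need, $(q)_\infty^3\mathcal{C}^{2/5}_{2,2r}=q^{15-5r}(q)_\infty^3\mathcal{C}^{2/5}_{0,2(5-r)}+\Xi_r$ with $\Xi_r$ an explicit pure theta expression, is nowhere established. The ``symmetry of the admissible characters'' you invoke is not among the symmetries recorded in the paper (the listed $C^N_{m,\ell}=C^N_{N-m,N-\ell}$ is useless at $N=2/5$); what does exist is a spectral-flow identity of the shape $\chi^{(5,12)}_{2r}(zq^{5};q)=z^{-1}q^{-5/2}\chi^{(5,12)}_{2(5-r)}(z;q)$, and the whole difficulty is that one cannot naively compare Fourier coefficients across it: replacing $z$ by $zq^{5}$ moves the Laurent expansion of a meromorphic Jacobi form across poles of $1/j(z;q)$, and the resulting wall-crossing contributions are exactly what make $\Xi_r$ nonzero — this is the content of the quasi-periodicity Theorem \ref{theorem:generalQuasiPeriodicity} and of the polar part of Theorem \ref{theorem:generalPolarFinite}. (Naive coefficient matching would force $\Xi_r=0$, whereas for $r=0$ the four ``$1$''-terms in the statement sum to $q-q^2-q^3+q^5+q^6-\cdots\neq 0$.) Your assertion that after the change-of-$z$ relation ``by construction every Appell function cancels'' is precisely the claim that requires proof, and nothing in the sketch pins down $\Xi_r$, its sign, or even that it contains no leftover Appell pieces. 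Finally, identifying $\Xi_r$ with the required combination (the four ``$1$''s together with the leading quotient) is a theta identity of the same caliber as Proposition \ref{proposition:masterThetaIdentitypP512m2ell2r}, which the paper only proves via the Frye--Garvan machinery; calling that step routine bookkeeping understates what remains.

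For comparison, the paper proves the theorem independently of Theorem \ref{theorem:newMockThetaIdentitiespP512m0ell2r}: it specializes the $p=5$, $j=2$ polar-finite decomposition (Proposition \ref{proposition:polarFinite25}) at $z=iq^{5}$, where $j(-z^2q^{10};q^{20})$ vanishes, so the entire quantum-number-zero block drops out by Lemma \ref{lemma:polarFinite25AppellVanish}; the character value there is supplied by Proposition \ref{proposition:weylKac25ell2rzVal}; the surviving Appell combinations are rewritten via \eqref{equation:mxqz-flip} and \eqref{equation:mxqz-fnq-x} as $1-2m(-q^{2k},q^{10};q^{20})$, converted to the tenth-order mock theta functions by Proposition \ref{proposition:alternat10thAppellForms}, and the residual theta identity is exactly Proposition \ref{proposition:masterThetaIdentitypP512m2ell2r}. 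Your parenthetical alternative (isolating the coefficient of $z^{-1}j(-z^{2};q^{20})$ directly from Theorem \ref{theorem:generalPolarFinite}) is essentially this proof, with the specialization $z=iq^{5}$ being the clean way to perform that isolation; to rescue the cross-spin route you would first have to prove the spectral-flow relation and push it through the polar-finite decomposition term by term, tracking the wall-crossing theta corrections, which is at least as much work as the direct argument.
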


In Section \ref{section:techPrelim}, we recall basic facts about theta functions and Appell functions.  We also state a general formula for a character evaluation.  In Section \ref{section:Ram}, we recall the Appell function forms for the four tenth-order mock theta functions and also present alternate forms that are more useful for the paper.  In Section \ref{section:fryeGarvan}, we use the methods of Frye and Garvan \cite{FG} to prove two families of theta function identities.  In Section \ref{section:newResults}, we prove our new results Theorems  \ref{theorem:newMockThetaIdentitiespP512m0ell2r}  and \ref{theorem:newMockThetaIdentitiespP512m2ell2r}.


\section{Technical preliminaries}\label{section:techPrelim}

Following from the definitions are the general identities:
\begin{subequations}
{\allowdisplaybreaks \begin{gather}
j(q^n x;q)=(-1)^nq^{-\binom{n}{2}}x^{-n}j(x;q), \ \ n\in\mathbb{Z},\label{equation:j-elliptic}\\
j(x;q)=j(q/x;q)=-xj(x^{-1};q)\label{equation:j-flip},\\
j(x;q)={J_1}j(x,qx,\dots,q^{n-1}x;q^n)/{J_n^n} \ \ {\text{if $n\ge 1$,}}\label{equation:1.10}\\
j(x^n;q^n)={J_n}j(x,\zeta_nx,\dots,\zeta_n^{n-1}x;q^n)/{J_1^n} \ \ {\text{if $n\ge 1$,}}\label{equation:1.12}\\
j(z;q)=\sum_{k=0}^{m-1}(-1)^k q^{\binom{k}{2}}z^k
j\big ((-1)^{m+1}q^{\binom{m}{2}+mk}z^m;q^{m^2}\big ),\label{equation:jsplit}\\
j(qx^3;q^3)+xj(q^2x^3;q^3)=j(-x;q)j(qx^2;q^2)/J_2={J_1j(x^2;q)}/{j(x;q)},\label{equation:quintuple}
\end{gather}}%
\end{subequations}
\noindent  where identity (\ref{equation:quintuple}) is the quintuple product identity.

We will need to use some elementary properties of Appell functions.
\begin{proposition}  For generic $x,z\in \mathbb{C}^*$
{\allowdisplaybreaks 
\begin{subequations}
\begin{gather}
m(x,z;q)=m(x,qz;q),\label{equation:mxqz-fnq-z}\\
m(x,z;q)=x^{-1}m(x^{-1},z^{-1};q),\label{equation:mxqz-flip}\\
m(qx,z;q)=1-xm(x,z;q).\label{equation:mxqz-fnq-x}
\end{gather}
\end{subequations}}%
\end{proposition}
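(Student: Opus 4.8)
The plan is to prove all three identities directly from the defining bilateral series \eqref{equation:m-def}, by reindexing the sum over $r$ and invoking only the elementary theta transformations \eqref{equation:j-elliptic}--\eqref{equation:j-flip} together with the Jacobi triple product \eqref{equation:JTPid}; nothing deep enters, and the content is purely bookkeeping of powers of $q,x,z$ and of signs. I would treat \eqref{equation:mxqz-fnq-z} first, since \eqref{equation:mxqz-flip} reuses it. For \eqref{equation:mxqz-fnq-z}: in $m(x,qz;q)$ the numerator factor $(qz)^r=q^rz^r$ merges with $q^{\binom{r}{2}}$ to give $q^{\binom{r+1}{2}}$ and the denominator collapses to $1-q^rxz$; reindexing ($r\mapsto r-1$) restores the exponent $q^{\binom{r}{2}}$, flips the sign, pulls out an overall factor $-z^{-1}$, and returns the denominator to $1-q^{r-1}xz$, so the sum over $r$ equals $-z^{-1}j(z;q)\,m(x,z;q)$. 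Since $j(qz;q)=-z^{-1}j(z;q)$ by \eqref{equation:j-elliptic} with $n=1$, the prefactor $1/j(qz;q)$ exactly cancels the $-z^{-1}$, and $m(x,qz;q)=m(x,z;q)$.

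For \eqref{equation:mxqz-flip} I would start from $m(x^{-1},z^{-1};q)$ and rewrite each denominator as $1-q^{r-1}x^{-1}z^{-1}=1-(q^{1-r}xz)^{-1}$, so that its reciprocal is $-q^{1-r}xz/(1-q^{1-r}xz)$; multiplying this by $z^{-r}q^{\binom{r}{2}}$ leaves each summand in the shape $-(-1)^rq^{\binom{r-1}{2}}\,x\,z^{1-r}/(1-q^{1-r}xz)$, and it is essential that only a single overall factor of $x$ emerges, the extra $z$ being absorbed into the $z$-power. Reflecting the index, $r\mapsto 1-r$, and using $\binom{-s}{2}=\binom{s}{2}+s$ to fold $q^s$ into $(qz)^s$, the remaining sum becomes $x\,j(qz;q)\,m(x,qz;q)$; applying \eqref{equation:mxqz-fnq-z} and the equalities $j(qz;q)=-z^{-1}j(z;q)=j(z^{-1};q)$ (the last from \eqref{equation:j-flip}), the prefactor $1/j(z^{-1};q)$ cancels $j(qz;q)$ and we obtain $m(x^{-1},z^{-1};q)=x\,m(x,z;q)$, which is the claim.

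For \eqref{equation:mxqz-fnq-x} the one slightly non-obvious step is the partial-fraction-type identity $\frac{x}{1-q^{r-1}xz}=q^{1-r}z^{-1}\Big(\frac{1}{1-q^{r-1}xz}-1\Big)$. Substituting this into $x\,m(x,z;q)$, combining with $(-1)^rq^{\binom{r}{2}}z^r$, and reindexing ($r\mapsto r+1$) transforms the series into $\frac{1}{j(z;q)}\Big(-\sum_r\frac{(-1)^rq^{\binom{r}{2}}z^r}{1-q^rxz}+\sum_r(-1)^rq^{\binom{r}{2}}z^r\Big)$; the first sum is $j(z;q)\,m(qx,z;q)$ since $1-q^{r-1}(qx)z=1-q^rxz$, and the second is $j(z;q)$ by \eqref{equation:JTPid}. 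Hence $x\,m(x,z;q)=1-m(qx,z;q)$, which rearranges to \eqref{equation:mxqz-fnq-x}. I do not expect a genuine conceptual obstacle anywhere: each identity is a two-line manipulation once the right rewriting is in place, and the only thing demanding care is the exponent-and-sign bookkeeping across the reindexings, in particular the collapses $\binom{r}{2}+r=\binom{r+1}{2}$ and $\binom{r}{2}+1-r=\binom{r-1}{2}$, and, in \eqref{equation:mxqz-flip}, not extracting a spurious extra power of $z$.
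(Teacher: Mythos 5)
Your proof is correct: all three identities follow exactly as you describe from reindexing the bilateral series in \eqref{equation:m-def} and the elementary facts $j(qz;q)=-z^{-1}j(z;q)=j(z^{-1};q)$ and \eqref{equation:JTPid}, and your exponent bookkeeping ($\binom{r}{2}+r=\binom{r+1}{2}$, $\binom{r}{2}+1-r=\binom{r-1}{2}$, $\binom{-s}{2}=\binom{s}{2}+s$) checks out. The paper states this proposition without proof, as these are standard Appell-function properties taken from Hickerson--Mortenson \cite{HM}, where they are established by essentially the same direct series manipulations you carry out.
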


We also recall a formula for the character in terms of theta functions:
\begin{proposition} \label{proposition:WeylKac}\cite[Proposition $2.4$]{BoMo2025P2} We have
\begin{align*}
\chi_{\ell}^{(p,p^{\prime})}(z;q)
&=z^{-\frac{\ell+1}{2}}q^{p\frac{(\ell+1)^2}{4p^{\prime}}}
\frac{j(-q^{p(\ell+1)+pp^{\prime}}z^{-p^{\prime}};q^{2pp^{\prime}})
-z^{\ell+1}j(-q^{-p(\ell+1)+pp^{\prime}}z^{-p^{\prime}};q^{2pp^{\prime}}) }
{z^{-\frac{1}{2}}q^{\frac{1}{8}}j(z;q)}.
\end{align*}
\end{proposition}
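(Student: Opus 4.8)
The plan is to start from the Weyl--Kac expression \eqref{equation:WK-formula} for $\chi_\ell^{(p,p')}(z;q)$ and convert both the numerator and the denominator out of the $\Theta$-convention \eqref{equation:SW-thetaDef} into the $j$-convention \eqref{equation:JTPid}, using the dictionary $\Theta_{n,m}(z;q)=z^{-n/2}q^{n^2/(4m)}j(-q^{n+m}z^{-m};q^{2m})$ recorded above. Everything then reduces to one short theta-function identity for the denominator.

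First I would handle the numerator $\sum_{\sigma=\pm1}\sigma\,\Theta_{\sigma(\ell+1),p'}(z;q^p)=\Theta_{\ell+1,p'}(z;q^p)-\Theta_{-(\ell+1),p'}(z;q^p)$. Applying the dictionary with $q\mapsto q^p$, $m\mapsto p'$, and $n=\pm(\ell+1)$ gives
$\Theta_{\ell+1,p'}(z;q^p)=z^{-(\ell+1)/2}q^{p(\ell+1)^2/(4p')}j(-q^{p(\ell+1)+pp'}z^{-p'};q^{2pp'})$
and
$\Theta_{-(\ell+1),p'}(z;q^p)=z^{(\ell+1)/2}q^{p(\ell+1)^2/(4p')}j(-q^{-p(\ell+1)+pp'}z^{-p'};q^{2pp'})$.
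Subtracting and pulling out the common factor $z^{-(\ell+1)/2}q^{p(\ell+1)^2/(4p')}$ produces exactly the numerator written in the proposition, the $z^{\ell+1}$ arising from the ratio of the two powers of $z$.

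Next I would treat the denominator $\Theta_{1,2}(z;q)-\Theta_{-1,2}(z;q)$. The same dictionary with $m=2$ gives $\Theta_{\pm1,2}(z;q)=z^{\mp1/2}q^{1/8}j(-q^{2\pm1}z^{-2};q^4)$, so the denominator equals $q^{1/8}z^{-1/2}\bigl(j(-q^3z^{-2};q^4)-z\,j(-qz^{-2};q^4)\bigr)$. It then remains to identify the parenthesized expression as $j(z;q)$. This is the one genuinely substantive step: I would take the $m=2$ case of the splitting identity \eqref{equation:jsplit}, which reads $j(z;q)=j(-qz^2;q^4)-z\,j(-q^3z^2;q^4)$, and then apply the reflections $j(-qz^2;q^4)=j(-q^3z^{-2};q^4)$ and $j(-q^3z^2;q^4)=j(-qz^{-2};q^4)$, both instances of $j(x;q)=j(q/x;q)$ from \eqref{equation:j-flip}. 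Dividing the rewritten numerator by $q^{1/8}z^{-1/2}j(z;q)$ yields the stated formula. The main thing to watch is keeping the fractional powers of $q$ and $z$ consistent across the change of convention; the only real content is the three-line denominator computation, and there is no serious obstacle beyond that.
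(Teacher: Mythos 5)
Your proposal is correct and is essentially the expected derivation: the paper itself only cites \cite[Proposition 2.4]{BoMo2025P2}, and your argument — rewriting both sums in \eqref{equation:WK-formula} via the stated dictionary $\Theta_{n,m}(z;q)=z^{-n/2}q^{n^2/(4m)}j(-q^{n+m}z^{-m};q^{2m})$ and then identifying the denominator with $q^{1/8}z^{-1/2}j(z;q)$ using the $m=2$ case of \eqref{equation:jsplit} together with \eqref{equation:j-flip} — checks out line by line. All exponents and signs are consistent, so no changes are needed.
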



\section{Ramanujan's classical mock theta functions}\label{section:Ram}
From \cite[Section 5]{HM}, we have the following classical mock theta functions in Appell function form.

\noindent {\bf `10th order' functions}
{\allowdisplaybreaks \begin{align}
{\phi_{10}}(q)&=\sum_{n\ge 0}\frac{q^{\binom{n+1}{2}}}{(q;q^2)_{n+1}}
=-q^{-1}m(q,q;q^{10})-q^{-1}m(q,q^2;q^{10})
\label{equation:10th-phi(q)}\\
{\psi_{10}}(q)
&=\sum_{n\ge 0}\frac{q^{\binom{n+2}{2}}}{(q;q^2)_{n+1}}
=-m(q^3,q;q^{10})-m(q^3,q^{3};q^{10})
\label{equation:10th-psi(q)}\\
{X_{10}}(q)
&=\sum_{n\ge 0}\frac{(-1)^nq^{n^2}}{(-q;q)_{2n}}
=m(-q^2,q;q^{5})+m(-q^2,q^{4};q^{5})
\label{equation:10th-BigX(q)}\\
{\chi_{10}}(q)
&=\sum_{n\ge 0}\frac{(-1)^nq^{(n+1)^2}}{(-q;q)_{2n+1}}
=m(-q,q^{2};q^{5})+m(-q,q^{3};q^{5})
\label{equation:10th-chi(q)}
\end{align}}%

For the purposes of the paper, we will use alternate Appell function forms for the tenth-order mock theta functions:
\begin{proposition}\label{proposition:alternat10thAppellForms} We have
{\allowdisplaybreaks \begin{gather}
2m(-q^2,q^{10};q^{20})
=q^2\phi_{10}(-q^2)
-q^2\frac{J_{20}^2J_{6,20}}
{\overline{J}_{2,10}J_{4,20}}
\cdot \frac{J_{2}}{\overline{J}_{6,20}},
\label{equation:newAppell10thPhi}\\
2m(-q^4,q^{10};q^{20})
=\chi_{10}(q^4) 
-q^{4}\frac{J_{20}^2J_{2,20}}{\overline{J}_{4,10}J_{8,20}}
\cdot \frac{J_{2}}
{\overline{J}_{8,20}},
\label{equation:newAppell10thChi}\\
2m(-q^6,q^{10};q^{20})
=-\psi(-q^2)
 -q^2\frac{J_{20}^2 J_{2,20}}
{\overline{J}_{4,10}J_{8,20}}
\cdot \frac{J_{2}}
{\overline{J}_{2,20}},
\label{equation:newAppell10thPsi}\\
2m(-q^8,q^{10};q^{20})
=X_{10}(q^4)
-\frac{J_{20}^2J_{6,20}}
{\overline{J}_{2,10}J_{4,20}}
\cdot \frac{J_{2}}
{\overline{J}_{4,20}}.
\label{equation:newAppell10thX}
\end{gather}}%
\end{proposition}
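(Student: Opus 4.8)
The plan is to reduce all four identities to a single classical theta identity.  First I would substitute $q\mapsto -q^{2}$ in the Appell forms \eqref{equation:10th-phi(q)} and \eqref{equation:10th-psi(q)}, and $q\mapsto q^{4}$ in \eqref{equation:10th-BigX(q)} and \eqref{equation:10th-chi(q)}.  Using $-(-q^{2})^{-1}=q^{-2}$, $(-q^{2})^{3}=-q^{6}$, and $(-q^{2})^{10}=(q^{4})^{5}=q^{20}$, this rewrites the mock-theta pieces appearing on the right of \eqref{equation:newAppell10thPhi}--\eqref{equation:newAppell10thX} as
\begin{gather*}
q^{2}\phi_{10}(-q^{2})=m(-q^{2},-q^{2};q^{20})+m(-q^{2},q^{4};q^{20}),\\
\chi_{10}(q^{4})=m(-q^{4},q^{8};q^{20})+m(-q^{4},q^{12};q^{20}),\\
-\psi_{10}(-q^{2})=m(-q^{6},-q^{2};q^{20})+m(-q^{6},-q^{6};q^{20}),\\
X_{10}(q^{4})=m(-q^{8},q^{4};q^{20})+m(-q^{8},q^{16};q^{20}).
\end{gather*}
Thus each of the four assertions takes the shape
\[
\bigl[m(-q^{2\mu},q^{10};q^{20})-m(-q^{2\mu},z_{0};q^{20})\bigr]+\bigl[m(-q^{2\mu},q^{10};q^{20})-m(-q^{2\mu},z_{0}';q^{20})\bigr]=\pm\,q^{c}\,(\text{theta quotient}),
\]
with $\mu\in\{1,2,3,4\}$ and $(z_{0},z_{0}')$ the two second-arguments displayed above.

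Next I would apply the change-of-$z$ formula for Appell functions of Hickerson and Mortenson \cite{HM},
\[
m(x,z_{1};q)-m(x,z_{0};q)=\frac{z_{0}\,(q)_{\infty}^{3}\,j(z_{1}/z_{0};q)\,j(xz_{0}z_{1};q)}{j(z_{0};q)\,j(z_{1};q)\,j(xz_{0};q)\,j(xz_{1};q)},
\]
to each of the two brackets, with base $q^{20}$ and $x=-q^{2\mu}$.  Normalizing the resulting eight theta quotients by ellipticity \eqref{equation:j-elliptic} and the flip \eqref{equation:j-flip}, and invoking the doubling relation \eqref{equation:1.10} with $n=2$ (which gives $\overline{J}_{2,10}=J_{10}\overline{J}_{2,20}\overline{J}_{8,20}/J_{20}^{2}$, $\overline{J}_{4,10}=J_{10}\overline{J}_{4,20}\overline{J}_{6,20}/J_{20}^{2}$, and $J_{10,20}=J_{10}^{2}/J_{20}$), I expect all four assertions to collapse, after cancelling common theta factors and powers of $q$, to the single identity
\[
\overline{J}_{6,20}\,\overline{J}_{8,20}-q^{2}\,\overline{J}_{2,20}\,\overline{J}_{4,20}=J_{2}J_{10},
\]
equivalently $j(-q^{6};q^{20})\,j(-q^{8};q^{20})-q^{2}\,j(-q^{2};q^{20})\,j(-q^{4};q^{20})=(q^{2};q^{2})_{\infty}(q^{10};q^{10})_{\infty}$.

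The main obstacle is this last theta identity, together with keeping signs and $q$-powers straight across the four parallel reductions (although the uniformity means there is effectively one identity to prove).  I would establish it either by the Frye--Garvan technique used in Section \ref{section:fryeGarvan} --- which reduces such a theta-function identity to a finite coefficient verification --- or by deriving it from the classical Riemann/Weierstrass addition formula for theta functions, which itself follows from the quintuple product \eqref{equation:quintuple} and the splitting identity \eqref{equation:jsplit}.  (Alternatively one could prove the $\phi_{10}$ case only and obtain \eqref{equation:newAppell10thChi}--\eqref{equation:newAppell10thX} from the known linear relations among the four tenth-order functions, but the direct route above is more self-contained.)
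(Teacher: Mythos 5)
Your proposal is correct, but it is not the route the paper takes: the paper proves Proposition \ref{proposition:alternat10thAppellForms} in one line, by quoting \cite[Proposition 5.2]{BoMo2025P2} and substituting $q\to q^2$. Your argument is instead a self-contained derivation from the Appell forms \eqref{equation:10th-phi(q)}--\eqref{equation:10th-chi(q)}, and its one essential imported ingredient is the change-of-$z$ theorem of Hickerson--Mortenson (\cite[Theorem 3.3]{HM}), which is \emph{not} among the Appell-function properties \eqref{equation:mxqz-fnq-z}--\eqref{equation:mxqz-fnq-x} recorded in Section \ref{section:techPrelim}, so it must be cited explicitly. I checked the reduction: your rewritings of $q^2\phi_{10}(-q^2)$, $\chi_{10}(q^4)$, $-\psi_{10}(-q^2)$, $X_{10}(q^4)$ under $q\mapsto -q^2$ and $q\mapsto q^4$ are right; all specializations are generic (no vanishing $j$'s in denominators, no poles of $m$ are met); and after normalizing with \eqref{equation:j-elliptic}, \eqref{equation:j-flip} (e.g.\ $j(-q^{22};q^{20})=q^{-2}\overline{J}_{2,20}$, $\overline{J}_{12,20}=\overline{J}_{8,20}$, $\overline{J}_{14,20}=\overline{J}_{6,20}$) and the doubling relations you state, all four cases do collapse, with the expected sign changes, to the single identity $\overline{J}_{6,20}\overline{J}_{8,20}-q^{2}\overline{J}_{2,20}\overline{J}_{4,20}=J_{2}J_{10}$. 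That identity is true and classical: it is the even/odd (product-to-sum) splitting of $j(q^{2};q^{10})\,j(q^{4};q^{10})$ into base-$q^{20}$ theta products, combined with the trivial $J_{2,10}J_{4,10}=J_{2}J_{10}$, so you do not even need the Frye--Garvan machinery, although that route would also succeed. The comparison, then: the paper's citation buys brevity at the cost of opacity (the content is hidden in the companion paper), while your derivation buys an independent, essentially elementary verification that makes visible the single theta identity underlying all four tenth-order formulas; its only costs are the external appeal to \cite[Theorem 3.3]{HM} and the bookkeeping of signs and $q$-powers, which you have organized correctly.
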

\begin{proof}  Use \cite[Proposition $5.2$]{BoMo2025P2} with the substitution $q\to q^2$.
\end{proof}


\section{Families of theta function identities through Frye and Garvan}
\label{section:fryeGarvan}

For the quantum-number zero, even-spin $2/3$-level mock theta conjecture-like identities found in Theorem \ref{theorem:newMockThetaIdentitiespP512m0ell2r}, we need to prove
\begin{proposition}
\label{proposition:masterThetaIdentitypP512m0ell2r} 
For $r\in\{0,1,2,3 \}$, we have
{\allowdisplaybreaks \begin{align*}
&-q^{\frac{1}{2}r^2-\frac{5}{2}r+1}j(q^{2+4r};q^{12})j(-q^{1+2r};q^{8})\frac{J_{1}}{J_{4}}\\
&\qquad =(-1)^{\kappa(r)}\frac{(q)_{\infty}^3}{J_{10,20}}\frac{j(-q^{10r+65};q^{120})}{J_{1,4}}\\
&\qquad \qquad  + q^{6-4r}
    \times\Big (  j(-q^{17+10r};q^{120} )
 -  q^{11-2r}j(-q^{-7+10r};q^{120})\Big )
\times q^2\frac{J_{20}^2J_{6,20}}
{\overline{J}_{2,10}J_{4,20}}
\cdot \frac{J_{2}}{\overline{J}_{6,20}}  \\
&\qquad  \qquad    -q^{3-3r}
    \times\Big (  j(-q^{29+10r};q^{120} )
 -  q^{22-4r}j(-q^{-19+10r};q^{120})\Big )
       \times 
q^{4}\frac{J_{20}^2J_{2,20}}{\overline{J}_{4,10}J_{8,20}}
\cdot \frac{J_{2}}
{\overline{J}_{8,20}} \\
&\qquad \qquad +q^{1-2r}
  \times\Big (  j(-q^{41+10r};q^{120} )
 -  q^{33-6r}j(-q^{-31+10r};q^{120})\Big )     \times 
 q^2\frac{J_{20}^2 J_{2,20}}
{\overline{J}_{4,10}J_{8,20}}
\cdot \frac{J_{2}}
{\overline{J}_{2,20}}\\
&\qquad \qquad    -q^{-r}
   \times\Big (  j(-q^{53+10r};q^{120} )
 -  q^{44-8r}j(-q^{-43+10r};q^{120})\Big )   \times
\frac{J_{20}^2J_{6,20}}
{\overline{J}_{2,10}J_{4,20}}
\cdot \frac{J_{2}}
{\overline{J}_{4,20}},
\end{align*}}%
where $\kappa(r):=\begin{cases} 0 & \textup{if} \ r=0,1,4,5,\\
1 & \textup{if} \ r=2,3. \end{cases}$
\end{proposition}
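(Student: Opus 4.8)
The plan is to prove Proposition~\ref{proposition:masterThetaIdentitypP512m0ell2r} as an identity between ordinary theta functions and, for each fixed $r$, to certify it by the automated modular-forms method of Frye and Garvan~\cite{FG}. A preliminary point: the apparent half-integral powers of $q$ are genuinely integral, since $\tfrac12 r^2-\tfrac52 r+1=\tfrac12 r(r-5)+1$ and $r(r-5)$, being a product of two integers of opposite parity, is even; likewise for $\tfrac12 r^2-\tfrac32 r+3$ in Theorem~\ref{theorem:newMockThetaIdentitiespP512m2ell2r}. The next step is to clear denominators. Each of $\overline{J}_{2,10}$, $\overline{J}_{4,10}$, $\overline{J}_{2,20}$, $\overline{J}_{4,20}$, $\overline{J}_{6,20}$, $\overline{J}_{8,20}$, $J_{1,4}$, $J_{10,20}$, and $(q)_{\infty}$ is nonvanishing and holomorphic on $|q|<1$; multiplying the asserted identity through by a suitable product of these functions, and normalizing every $j(-q^{a};q^{120})$ to a fixed residue of $a$ modulo $120$ via \eqref{equation:j-elliptic} and \eqref{equation:j-flip}, yields for each $r\in\{0,1,2,3\}$ an identity of the form $\sum_i \varepsilon_i q^{e_i}\prod_k j(q^{a_{ik}};q^{b_{ik}})=0$ with $\varepsilon_i\in\{\pm1\}$ and every modulus $b_{ik}$ dividing $120$.

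The second step is to place this cleared identity in a finite-dimensional space of modular forms. Writing $q=e^{2\pi i\tau}$, every factor $j(q^a;q^b)$ is, up to an explicit power of $q$, a classical theta constant, hence a weight-$\tfrac12$ form with an explicit multiplier on a congruence subgroup whose level divides $240$. A weight count shows the original statement is homogeneous of weight $1$ as a relation among theta constants: the left side $j\cdot j\cdot J_1/J_4$ has weight $\tfrac12+\tfrac12+\tfrac12-\tfrac12=1$, the first term on the right $(q)_{\infty}^3\cdot j/(J_{10,20}J_{1,4})$ has weight $\tfrac32-\tfrac12+\tfrac12-\tfrac12=1$, and each remaining term, a difference of two $j$-functions times an eta quotient, again has weight $\tfrac12+\tfrac12=1$; clearing denominators multiplies every monomial by one and the same weight and multiplier. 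Hence the difference of the two sides lies in a single space $M_W(\Gamma,\chi)$, and by the valence formula it vanishes identically as soon as its $q$-expansion vanishes beyond the Sturm-type bound $\tfrac{W}{12}[\SL(2,\Z):\Gamma]$.

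The third step is the finite verification. Here I would run the Frye--Garvan procedure for theta and eta quotients~\cite{FG}: it determines the exact congruence subgroup and multiplier, computes the order of vanishing of each monomial at every cusp --- which in particular confirms that the identity as written, denominators included, is genuinely pole-free --- and outputs the finite list of leading $q$-coefficients that must be checked; one then carries out this check for $r=0,1,2,3$. The remaining cases $r=4,5$ of Theorems~\ref{theorem:newMockThetaIdentitiespP512m0ell2r} and \ref{theorem:newMockThetaIdentitiespP512m2ell2r} follow from the reflection $r\mapsto 5-r$, under which the leading exponent $\tfrac12 r(r-5)$ and the sign $(-1)^{\kappa(r)}$ are invariant and the theta arguments are permuted through \eqref{equation:j-flip}, so no additional theta identity is required.

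I expect the main obstacle to be the cusp bookkeeping. The fractional $q$-powers and the sign flips concealed in the $\overline{J}$ factors (equivalently, half-integral shifts of the theta characteristics) have to be tracked precisely so that the orders of vanishing at $0$ and at the other cusps, and therefore the Sturm bound, are honest; an error there would at best inflate the bound and at worst hide a genuine discrepancy. A secondary, purely computational, nuisance is the size of the level, which makes the cusp data and coefficient check sizeable though entirely mechanical once the modular setup is fixed. As a cross-check one can also re-derive the identity by hand from the listed product identities --- especially the splitting \eqref{equation:jsplit} and the quintuple product \eqref{equation:quintuple} applied to the $j(\cdot;q^{120})$ factors --- though the Frye--Garvan certificate is the more economical route.
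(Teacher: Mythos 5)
Your plan is essentially the paper's own proof: the authors simply invoke the automated theta-function identity machinery of Frye and Garvan \cite{FG} (as in \cite[Propositions 6.1, 6.2]{BoMo2025P2}), which is exactly the clear-denominators, identify-the-level-and-multiplier, Sturm-bound coefficient check you describe, carried out for each $r\in\{0,1,2,3\}$. Your weight-$1$ bookkeeping is correct, so aside from the (unneeded for this proposition) remark about $r=4,5$, there is nothing to add.
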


For the quantum-number two, even-spin  $2/5$-level mock theta conjecture-like identities found in Theorem \ref{theorem:newMockThetaIdentitiespP512m2ell2r}, we need to prove
\begin{proposition}
\label{proposition:masterThetaIdentitypP512m2ell2r}
For $r\in\{0,1,2,3,4,5 \}$, we have
{\allowdisplaybreaks  \begin{align*}
& q^{\frac{1}{2}r^2-\frac{3}{2}r+3}j(q^{2+4r};q^{12})j(-q^{5+2r};q^8)\frac{J_{1}}{J_{4}}\\
&\qquad =(-1)^{\kappa(r)}q^{15-5r}\frac{(q)_{\infty}^3}{J_{10,20}}
\frac{j(-q^{10r+5};q^{120}) }
{J_{1,4}}\\
&\qquad \qquad   -q^{10-4r}     \times\Big (  j(-q^{17+10r};q^{120} )
 -  q^{11-2r}j(-q^{-7+10r};q^{120})\Big )
     \times
\frac{J_{20}^2J_{6,20}}
{\overline{J}_{2,10}J_{4,20}}
\cdot \frac{J_{2}}
{\overline{J}_{4,20}} \\
&\qquad   \qquad  +q^{6-3r} 
    \times\Big (  j(-q^{29+10r};q^{120} )
 -  q^{22-4r}j(-q^{-19+10r};q^{120})\Big )
     \times
q^2\frac{J_{20}^2 J_{2,20}}
{\overline{J}_{4,10}J_{8,20}}
\cdot \frac{J_{2}}
{\overline{J}_{2,20}} \\
&\qquad  \qquad    -q^{3-2r}    \times\Big (  j(-q^{41+10r};q^{120} )
 -  q^{33-6r}j(-q^{-31+10r};q^{120})\Big )
     \times
q^{4}\frac{J_{20}^2J_{2,20}}{\overline{J}_{4,10}J_{8,20}}
\cdot \frac{J_{2}}
{\overline{J}_{8,20}} \\
&\qquad \qquad  +q^{1-r} 
    \times\Big (  j(-q^{53+10r};q^{120} )
 -  q^{44-8r}j(-q^{-43+10r};q^{120})\Big )
     \times
q^2\frac{J_{20}^2J_{6,20}}
{\overline{J}_{2,10}J_{4,20}}
\cdot \frac{J_{2}}{\overline{J}_{6,20}},  
 \end{align*}}%
where $\kappa(r):=\begin{cases} 0 & \textup{if} \ r=0,1,4,5,\\
1 & \textup{if} \ r=2,3. \end{cases}$
\end{proposition}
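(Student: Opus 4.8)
The plan is to recognize the identity in Proposition~\ref{proposition:masterThetaIdentitypP512m2ell2r} as an identity between modular forms of a fixed weight and level, and then verify it by the method of Frye and Garvan \cite{FG}: clearing denominators turns both sides into holomorphic theta quotients on $\Gamma_1(M)$ for a suitable $M$ (here $M$ will be a divisor of $\operatorname{lcm}(4,8,10,12,20,120)=120$, so $M=120$ should suffice), after which the identity follows by checking finitely many $q$-coefficients once an effective bound on the order of vanishing at the cusps has been established. Concretely, I would first multiply through by $J_{1,4}\,\overline{J}_{2,10}\overline{J}_{4,10}J_{4,20}J_{8,20}\overline{J}_{2,20}\overline{J}_{4,20}\overline{J}_{6,20}\overline{J}_{8,20}$ (or the least common such product) so that every term becomes an honest product of $j$-functions $j(q^a;q^b)$ with $b\mid 120$, hence an eta-quotient/generalized-eta-quotient on $\Gamma_1(120)$; the lead powers of $q$ and the $(-1)^{\kappa(r)}$ sign are harmless scalars that can be absorbed.

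Second, I would fix $r\in\{0,1,2,3,4,5\}$ (six separate but structurally identical verifications) and, using \eqref{equation:j-elliptic}, \eqref{equation:j-flip} to normalize each $j(-q^{c};q^{120})$ into the form $j(q^{c'};q^{120})$ with $0<c'<120$, rewrite every term as $q^{\alpha}\prod_i \eta(q^{d_i})^{e_i}$ times generalized eta-functions $\eta_{\delta,g}$. The three ``master'' terms carrying the Appell-originating theta quotients $J_{20}^2 J_{6,20}J_2/(\overline J_{2,10}J_{4,20}\overline J_{4,20})$ etc.\ and the conjectured product on the left-hand side have the same weight (one counts: each $j(x;q^N)$ contributes weight $1/2$, and both sides assemble to weight $2$, say), so after clearing denominators both sides are weight-$k$ holomorphic modular forms for an explicit $k$.

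Third, invoke the valence formula: the difference of the two sides, multiplied by the clearing factor, lies in $M_k(\Gamma_1(120),\chi)$ for an explicit Nebentypus $\chi$; by the standard Sturm-type bound it is identically zero once it vanishes to order exceeding $\tfrac{k}{12}[\operatorname{SL}_2(\Z):\Gamma_1(120)]$ in $q$. I would compute the requisite number of $q$-expansion coefficients (a routine but sizeable finite check, best done in a CAS such as the one accompanying \cite{FG}) for each of the six values of $r$. An alternative, slightly slicker route is to avoid heavy machinery: expand both sides via the Jacobi triple product, collect into residue classes of the exponent modulo $120$, and match coefficients combinatorially using \eqref{equation:jsplit} and \eqref{equation:quintuple}; the quintuple product identity is exactly what turns a product like $j(q^{2+4r};q^{12})$ into a sum of two $q^{72}$-theta functions, which then align term-by-term with the split forms of the $q^{120}$-thetas on the right.

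\textbf{Main obstacle.} The conceptual content is modest — this is ``only'' an identity among theta functions — but the bookkeeping is the real enemy: tracking the fractional/half-integer exponents $\tfrac12 r^2-\tfrac32 r+3$, the six cases of $r$, the sign function $\kappa(r)$ (which I expect arises from applying \eqref{equation:j-elliptic} to pull $q$-powers out of $j(-q^{10r+5};q^{120})$ and land in a canonical window, the parity of the resulting exponent $\binom{n}{2}$ producing exactly the split $r\in\{0,1,4,5\}$ vs.\ $r\in\{2,3\}$), and the precise matching of the many $\overline J_{a,b}$ factors. Getting the modulus and Nebentypus right for the Frye--Garvan check, and ensuring the Sturm bound is actually met by the number of coefficients computed, is where care is needed; once the framework is set up, each of the six verifications is mechanical.
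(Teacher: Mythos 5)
Your proposal is essentially the paper's own route: the authors omit the proof, stating only that it is a straightforward application of the Frye--Garvan methods of \cite{FG} (analogous to \cite[Propositions 6.1, 6.2]{BoMo2025P2}), which is exactly the clear-denominators, identify-the-level-and-weight, Sturm-bound-plus-finite-coefficient-check procedure you describe for each $r\in\{0,1,2,3,4,5\}$. Your outline is correct in approach, so no further comparison is needed.
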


\begin{proof}[Proof of Propositions \ref{proposition:masterThetaIdentitypP512m0ell2r} and \ref{proposition:masterThetaIdentitypP512m2ell2r}] The proofs are a straightforward application of the methods of \cite{FG} and are analogous to the proofs of \cite[Propositions 6.1, 6.2]{BoMo2025P2} so they will be omitted.
\end{proof}

\section{The two $2/5$-level mock theta conjecture-like identities}\label{section:newResults}

We start with some general preliminaries.
\begin{proposition} \label{proposition:polarFinite25} We have
{\allowdisplaybreaks \begin{align*}
(q)_{\infty}^3&\chi_{2r}^{2/5} (z;q)\\
&=(q)_{\infty}^3C_{0,2r}^{2/5}(q)j(-z^{2}q^{10};q^{20})
+z^{-1}q^{\frac{5}{2}}(q)_{\infty}^3C_{2,2r}^{2/5}(q)j(-z^{2};q^{20})\\
&\qquad -q^{-\frac{1}{8}+\frac{5(2r+1)^2}{48}}q^{10-5r}j(-q^{10}z^{2};q^{20})\\
&\qquad \qquad   \times \sum_{m=1}^{4}(-1)^{m}q^{\binom{m+1}{2}+m(r-5)} \\
&\qquad \qquad \qquad    \times\Big (  j(-q^{12m+5(2r+1)};q^{120} )
 -  q^{12m-m(2r+1)}j(-q^{-12m+5(2r+1)};q^{120})\Big )\\
&\qquad \qquad \qquad    \times
\Big (m(-q^{2m},-q^{10}z^{-2};q^{20}) 
 + m(-q^{2m},-q^{10}z^{2};q^{20})\Big )\\
&\qquad -q^{-\frac{1}{8}+\frac{5(2r+1)^2}{48}}q^{10-5(r-1)}z^{-1}
j(-z^{2};q^{20})\\
&\qquad \qquad   \times \sum_{m=1}^{4}(-1)^{m}q^{\binom{m+1}{2}+m(r-5)} \\
&\qquad \qquad \qquad    \times\Big (  j(-q^{12m+5(2r+1)};q^{120} )
 -  q^{12m-m(2r+1)}j(-q^{-12m+5(2r+1)};q^{120})\Big )\\
&\qquad \qquad \qquad    \times
\Big (q^{m-10}m(-q^{2m-10},-q^{20}z^{-2};q^{20}) 
 + q^{-m}m(-q^{2m+10},-z^{2};q^{20})\Big ).
\end{align*}}%
\end{proposition}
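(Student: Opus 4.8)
The plan is to obtain this as a direct specialization of Theorem \ref{theorem:generalPolarFinite}. Recall that the admissible level here is $N = 2/5$, which corresponds to $(p,p') = (5,12)$, i.e. $p=5$, $p'=2p+j$ with $j=2$. First I would substitute $p=5$, $j=2$, and $\ell = 2r$ into the general even-spin polar-finite decomposition of Theorem \ref{theorem:generalPolarFinite}. The outer sum over $s$ runs from $0$ to $j-1=1$, producing exactly two polar terms: the $s=0$ term gives $C_{0,2r}^{2/5}(q)\,j(-z^2 q^{10};q^{20})$, and the $s=1$ term gives $z^{-1}q^{\frac{p}{j}} C_{2,2r}^{2/5}(q)\, j(-z^2 q^{p(j-2)};q^{2pj}) = z^{-1}q^{5/2} C_{2,2r}^{2/5}(q)\, j(-z^2;q^{20})$, using that $p/j = 5/2$ and $p(j-2)=0$, $2pj = 20$. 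Multiplying through by $(q)_\infty^3$ then matches the first line of the claimed identity.

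Next I would expand the mock part of Theorem \ref{theorem:generalPolarFinite}, again with $s \in \{0,1\}$. The common prefactor $(-1)^p q^{-1/8 + p(2r+1)^2/(4(2p+j))} q^{\binom{p}{2} - p(r-s)}$ becomes $(-1)^5 q^{-1/8 + 5(2r+1)^2/48} q^{10 - 5(r-s)}$, so $s=0$ yields the exponent shift $q^{10-5r}$ and $s=1$ yields $q^{10-5(r-1)}$, with $(-1)^5 = -1$ accounting for the overall minus signs on both mock blocks. The inner sum over $m$ runs from $1$ to $p-1 = 4$, with summand $(-1)^m q^{\binom{m+1}{2} + m(r-5)}$ times the theta-difference $j(-q^{m(2p+j)+p(2r+1)};q^{2p(2p+j)}) - q^{m(2p+j)-m(2r+1)} j(-q^{-m(2p+j)+p(2r+1)};q^{2p(2p+j)})$; here $m(2p+j) = 12m$, $p(2r+1) = 5(2r+1)$, and $2p(2p+j) = 120$, matching the displayed theta-differences exactly. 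Finally, the Appell-function factor for $s=0$ is $j(-z^2 q^{10};q^{20})\bigl(q^{0}m(-q^{2m},-q^{10}z^{-2};q^{20}) + q^{0}m(-q^{2m},-q^{10}z^{2};q^{20})\bigr)$ after simplifying $q^{ms-2ps}$, $q^{-ms}$, $q^{p(j+2s)}$, $q^{p(j-2s)}$ at $s=0$; and for $s=1$ it is $j(-z^2;q^{20})\bigl(q^{m-10}m(-q^{2m-10},-q^{20}z^{-2};q^{20}) + q^{-m}m(-q^{2m+10},-z^2;q^{20})\bigr)$, using $q^{ms-2ps} = q^{m-10}$, $p(j+2s) = 20$, $q^{-ms}=q^{-m}$, $p(j-2s) = 0$. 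Collecting these gives precisely the two mock blocks in the statement.

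The only real bookkeeping obstacle is confirming that every exponent substitution is carried out consistently — in particular that the $j$-modulus arguments $q^{p(j-2s)}$ evaluate to $q^{10}$ at $s=0$ and $q^{0}=1$ at $s=1$ (and the mirrored $q^{p(j+2s)} \mapsto q^{10}, q^{20}$), and that the inner theta-difference moduli collapse correctly to $q^{120}$ and $q^{20}$. These are all routine, but they must be tracked carefully since a single sign or exponent slip propagates through the later derivation of Theorems \ref{theorem:newMockThetaIdentitiespP512m0ell2r} and \ref{theorem:newMockThetaIdentitiespP512m2ell2r}. Once the substitution is verified term-by-term against Theorem \ref{theorem:generalPolarFinite}, the proposition follows immediately; no new identities for theta or Appell functions are needed at this stage.
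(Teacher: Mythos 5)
Your proposal is correct and is exactly the paper's own proof: the authors likewise specialize Theorem \ref{theorem:generalPolarFinite} to $p=5$, $j=2$, multiply by $(q)_{\infty}^3$, and expand the sum over $s\in\{0,1\}$, with all the exponent evaluations ($q^{p/j\cdot s^2}=q^{5/2}$, $p(j\pm 2s)$, $2pj=20$, $2p(2p+j)=120$, $(-1)^p=-1$) coming out as you describe.
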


\begin{lemma}\label{lemma:polarFinite25AppellVanish} We have
{\allowdisplaybreaks \begin{gather}
\lim_{z\to i} j(-z^2;q^{20}) \left ( q^{m-10}m(-q^{2m-10},-q^{20}z^{-2};q^{20}) 
 + q^{-m}m(-q^{2m+10},-z^{2};q^{20})\right ) =0,\label{equation:m0AppellVanish}\\
\lim_{z\to iq^5} j(-z^2q^{10};q^{20}) \left ( m(-q^{2m},-q^{10}z^{-2};q^{20}) 
 + m(-q^{2m},-q^{10}z^{2};q^{20})\right ) =0.\label{equation:m2AppellVanish}
\end{gather}}%
\end{lemma}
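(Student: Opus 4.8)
The plan is as follows. Each of \eqref{equation:m0AppellVanish} and \eqref{equation:m2AppellVanish} asserts that a theta function which \emph{vanishes} at the relevant specialization of $z$, multiplied by a linear combination of Appell functions each of which has a \emph{simple pole} there, has limit $0$; so the ``leading parts'' of the two Appell functions at the pole must cancel. I would make this visible by clearing the theta denominator hidden inside each $m$-function via its definition \eqref{equation:m-def}, after first matching the outer theta prefactor to that denominator using the reflection rule $j(x;q)=j(q/x;q)$ from \eqref{equation:j-flip}. After this, every term inside the limit becomes a plain bilateral $q$-series with no pole at the specialization point, so the limit can be taken termwise, and one is left with an elementary $q$-series identity.

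For \eqref{equation:m0AppellVanish}: applying $j(x;q)=j(q/x;q)$ in base $q^{20}$ gives $j(-z^{2};q^{20})=j(-q^{20}z^{-2};q^{20})$, which is exactly $j(\,\cdot\,;q^{20})$ of the $z$-argument of the first Appell function, while $j(-z^{2};q^{20})$ is already the $z$-denominator of the second one. Hence, by \eqref{equation:m-def}, the products $j(-z^{2};q^{20})\,m(-q^{2m-10},-q^{20}z^{-2};q^{20})$ and $j(-z^{2};q^{20})\,m(-q^{2m+10},-z^{2};q^{20})$ collapse to the bilateral series $\sum_{r}(-1)^{r}q^{20\binom{r}{2}}(-q^{20}z^{-2})^{r}\big/(1-q^{20r+2m-10}z^{-2})$ and $\sum_{r}(-1)^{r}q^{20\binom{r}{2}}(-z^{2})^{r}\big/(1-q^{20r+2m-10}z^{2})$. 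Letting $z\to i$, so that $z^{\pm 2}\to -1$, and using $\binom{r}{2}+r=\binom{r+1}{2}$, these tend to $S_{1}:=\sum_{r}(-1)^{r}q^{20\binom{r+1}{2}}\big/(1+q^{20r+2m-10})$ and $S_{2}:=\sum_{r}(-1)^{r}q^{20\binom{r}{2}}\big/(1+q^{20r+2m-10})$ respectively. The limit in \eqref{equation:m0AppellVanish} is then $q^{m-10}S_{1}+q^{-m}S_{2}=q^{-m}\bigl(q^{2m-10}S_{1}+S_{2}\bigr)$, and since $q^{2m-10}q^{20\binom{r+1}{2}}=q^{20\binom{r}{2}}q^{20r+2m-10}$, the two summands of $q^{2m-10}S_{1}+S_{2}$ combine into $\sum_{r}(-1)^{r}q^{20\binom{r}{2}}=j(1;q^{20})=0$ by \eqref{equation:JTPid}.

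For \eqref{equation:m2AppellVanish} the argument is the same and slightly shorter: $j(x;q)=j(q/x;q)$ gives $j(-z^{2}q^{10};q^{20})=j(-q^{10}z^{-2};q^{20})$, and trivially $-z^{2}q^{10}=-q^{10}z^{2}$, so the single prefactor $j(-z^{2}q^{10};q^{20})$ is simultaneously the $z$-denominator of both Appell functions appearing. Clearing them via \eqref{equation:m-def} and letting $z\to iq^{5}$ (so $z^{2}\to -q^{10}$, $z^{-2}\to -q^{-10}$) produces $T_{1}:=\sum_{r}(-1)^{r}q^{20\binom{r}{2}}\big/(1+q^{20r+2m-20})$ from the first $m$ and $T_{2}:=\sum_{r}(-1)^{r}q^{20\binom{r+1}{2}}\big/(1+q^{20r+2m})$ from the second; the index shift $r\mapsto r+1$ in $T_{1}$ shows $T_{1}=-T_{2}$, hence $T_{1}+T_{2}=0$.

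The only step requiring genuine care rather than bookkeeping is the termwise passage to the limit: I would note that for generic $q$ with $|q|<1$ (which suffices, since the asserted identities are between analytic functions of $q$, or equivalently formal $q$-series after clearing fractional powers elsewhere) the denominators $1-q^{20r+2m-10}z^{\pm 2}$ stay bounded away from $0$ uniformly as $z$ approaches $i$ (resp. $iq^{5}$) for $m\in\{1,2,3,4\}$, while the factor $q^{20\binom{r}{2}}$ forces superexponential decay as $|r|\to\infty$; hence the bilateral series converge uniformly on a neighbourhood of the specialization point and the limit may be taken inside the sum. Apart from that, the work is just tracking exponents through $j(x;q)=j(q/x;q)$ and the index shifts, the only real ``trick'' being to recognize the telescoping $t/(1+t)+1/(1+t)=1$ together with the vanishing $j(1;q^{20})=0$.
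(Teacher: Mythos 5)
Your proposal is correct, and its core mechanism is the same as the paper's: multiply the offending $m$-functions by the vanishing theta prefactor, recognize the product as a bilateral series via \eqref{equation:m-def}, pass to the limit termwise (justified as you do by absolute/uniform convergence and nonvanishing denominators), and watch the two resulting series cancel. The difference is in how the two steps before the cancellation are organized and how the cancellation is finished. The paper first applies the Appell transformations \eqref{equation:mxqz-flip} and \eqref{equation:mxqz-fnq-z} to put both $m$-functions over the \emph{same} second argument ($-z^{2}$, resp.\ $-q^{10}z^{-2}$), so the single prefactor is literally the common denominator; after the limit it evaluates each bilateral sum in closed form as $J_{20}^{3}/j(-q^{\mp 2m+10};q^{20})$ (resp.\ $J_{20}^{3}/j(-q^{\pm 2m};q^{20})$) via the standard partial-fraction identity $\sum_{k}(-1)^{k}q^{\binom{k+1}{2}}/(1-q^{k}x)=J_{1}^{3}/j(x;q)$, and concludes by the symmetry $j(x;q)=j(q/x;q)$. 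You instead leave the Appell functions untouched, match the prefactor to each $m$-denominator with $j(x;q)=j(q/x;q)$, and cancel elementarily: in \eqref{equation:m0AppellVanish} the two series combine termwise into $\sum_{r}(-1)^{r}q^{20\binom{r}{2}}=j(1;q^{20})=0$, and in \eqref{equation:m2AppellVanish} an index shift $r\mapsto r+1$ gives $T_{1}=-T_{2}$. Both routes are valid; yours is slightly more self-contained in that it avoids the (unstated in this paper) partial-fractions evaluation, while the paper's version reuses only the displayed Appell properties and theta symmetries and yields the intermediate closed forms $J_{20}^{3}/j(\cdot\,;q^{20})$ as a byproduct.
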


For $z=i$ and $z=iq^5$, we then need to evaluate the character on the left-hand side of Proposition \ref{proposition:polarFinite25}.  Then it is straightforward to solve for the string function.
\begin{proposition}\label{proposition:weylKac25ell2rzVal} We have
\begin{gather}
\chi_{2r}^{(5,12)}(i;q)
=(-1)^{\kappa(r)}q^{-\frac{1}{8}+\frac{5(2r+1)^2}{48}}j(-q^{10r+65};q^{120})\frac{J_{2}}{J_{1}J_{4}},
 \label{equation:weylKac25ell2rzVal1}\\
\chi_{2r}^{(5,12)}(iq^{5};q)
=-(-1)^{\kappa(r)}iq^{10-5r}q^{-\frac{1}{8}+\frac{5(2r+1)^2}{48}}j(-q^{10r+5};q^{120})
\frac{J_{2}}{J_{1}J_{4}}. 
 \label{equation:weylKac25ell2rzVal2}
\end{gather}
 where  $\kappa(r):=\begin{cases} 0 & \textup{if} \ r=0,1,4,5,\\
1 & \textup{if} \ r=2,3. \end{cases}$
\end{proposition}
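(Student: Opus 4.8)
The plan is to evaluate the admissible character at the two special points $z=i$ and $z=iq^5$ using Proposition \ref{proposition:WeylKac}, which gives
\[
\chi_{\ell}^{(p,p')}(z;q)=z^{-\frac{\ell+1}{2}}q^{p\frac{(\ell+1)^2}{4p'}}\frac{j(-q^{p(\ell+1)+pp'}z^{-p'};q^{2pp'})-z^{\ell+1}j(-q^{-p(\ell+1)+pp'}z^{-p'};q^{2pp'})}{z^{-\frac12}q^{\frac18}j(z;q)}.
\]
First I would specialize $(p,p')=(5,12)$ and $\ell=2r$, so $p(\ell+1)=5(2r+1)$, $pp'=60$, $2pp'=120$, and $z^{-p'}=z^{-12}$. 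For $z=i$ we have $z^{-12}=1$, $z^{\ell+1}=i^{2r+1}=i\cdot(-1)^r$, and $z^{-\frac{\ell+1}{2}}z^{\frac12}=i^{-r}$; for $z=iq^5$ we have $z^{-12}=q^{-60}$, so the second theta argument picks up an extra factor of $q^{-60}$ which combines with the $q^{60}$ already present, and $z^{\ell+1}=(iq^5)^{2r+1}=i(-1)^r q^{10r+5}$. The denominator requires evaluating $j(i;q)$ and $j(iq^5;q)$; the first is a standard evaluation and the second follows from \eqref{equation:j-elliptic} applied with $x=i$ and the appropriate power of $q$.

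The heart of the computation is then to collapse the two-term numerator into a single theta function. Setting $z=i$, the numerator of $\chi_{2r}^{(5,12)}(i;q)$ becomes $j(-q^{5(2r+1)+60};q^{120})-i(-1)^rj(-q^{-5(2r+1)+60};q^{120})=j(-q^{10r+65};q^{120})-i(-1)^rj(-q^{55-10r};q^{120})$. Using the reflection \eqref{equation:j-flip} in the form $j(-q^{55-10r};q^{120})=j(q^{120}/(-q^{55-10r});q^{120})=j(-q^{65+10r};q^{120})$ (after also possibly applying \eqref{equation:j-elliptic} to reindex), one finds that the two theta terms are proportional, so the bracket collapses to a scalar multiple of $j(-q^{10r+65};q^{120})$; the sign and power-of-$i$ bookkeeping produces exactly the factor $(-1)^{\kappa(r)}$ after one reduces $r$ modulo the relevant period and tracks the sign of $i^{-r}\cdot(1-i\cdot i(-1)^r\cdot(\pm1))$ case by case for $r\in\{0,1,2,3,4,5\}$. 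The analogous manipulation at $z=iq^5$, using \eqref{equation:j-elliptic} to absorb the $q^{-60}$ from $z^{-12}$, yields the second formula with the extra prefactor $-iq^{10-5r}$ and the theta function $j(-q^{10r+5};q^{120})$, again with the same $\kappa(r)$.

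I expect the main obstacle to be the sign and root-of-unity bookkeeping: correctly tracking the factors $i^{-r}$, $i^{2r+1}$, and the signs coming from the elliptic and flip identities \eqref{equation:j-elliptic}, \eqref{equation:j-flip}, and verifying that all of this organizes into the clean piecewise constant $(-1)^{\kappa(r)}$ with $\kappa(r)=0$ for $r\in\{0,1,4,5\}$ and $\kappa(r)=1$ for $r\in\{2,3\}$. This is most safely done by checking the six values of $r$ directly, noting that $j(-q^{a};q^{120})$ has period $120$ in the exponent $a$ up to an explicit monomial factor via \eqref{equation:j-elliptic}, so $r$ and $r+6$ give genuinely the same reduced data and the six cases are exhaustive. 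The theta quotient $J_2/(J_1J_4)$ in the answer comes entirely from $1/j(z;q)$ together with the overall $q^{-1/8}$, $q^{1/8}$ cancellation and the $q^{p(\ell+1)^2/(4p')}=q^{5(2r+1)^2/48}$ prefactor; verifying $j(i;q)=J_2^2/J_4$ (equivalently $(-q^2;q^2)_\infty^{-1}(q^2;q^2)_\infty^{2}$-type evaluations, which follow from \eqref{equation:1.10}–\eqref{equation:1.12}) and the corresponding evaluation of $j(iq^5;q)$ is the one genuinely computational lemma needed, but it is routine.
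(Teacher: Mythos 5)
Your route is the same as the paper's: specialize Proposition \ref{proposition:WeylKac} to $(p,p')=(5,12)$, $\ell=2r$, collapse the two-term numerator into a single theta via \eqref{equation:j-flip} (and \eqref{equation:j-elliptic} at $z=iq^5$), evaluate the denominator theta, and check the root-of-unity constant case by case in $r$; the reductions $z^{-12}=1$ resp.\ $q^{-60}$, $z^{2r+1}=i(-1)^r$ resp.\ $i(-1)^rq^{10r+5}$, and the prefactor $-iq^{10-5r}$ all match the paper's computation.

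The one concrete error is your stated evaluation $j(i;q)=J_2^2/J_4$, which is false ($J_2^2/J_4=j(q^2;q^4)$ is real, while $j(i;q)$ is not). From the triple product \eqref{equation:JTPid}, $j(i;q)=(i;q)_\infty(-iq;q)_\infty(q)_\infty=(1-i)(-q^2;q^2)_\infty J_1=(1-i)\,J_1J_4/J_2$, so that $1/j(i;q)=\tfrac{1+i}{2}\cdot\tfrac{J_2}{J_1J_4}$. This is not a cosmetic point: the complex factor $\tfrac{1+i}{2}$ is exactly what makes the constant real, via $i^{-r}\tfrac{(1+i)(1-(-1)^ri)}{2}=(-1)^{\kappa(r)}$ (this is the paper's final step). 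With your value of $j(i;q)$ the leftover factor would be $i^{-r}(1-i(-1)^r)$, which is never real, and the quotient $J_4/J_2^2$ cannot produce the claimed $J_2/(J_1J_4)$ (note $J_2/(J_1J_4)=1/J_{1,4}$). Relatedly, your sketched bookkeeping factor ``$1-i\cdot i(-1)^r$'' equals $1+(-1)^r$ and would wrongly annihilate the odd-$r$ cases; the correct bracket is $1-i(-1)^r$ coming from $z^{2r+1}=i^{2r+1}=i(-1)^r$. Once $j(i;q)$ is corrected (and $j(iq^5;q)=-q^{-10}i^{-5}j(i;q)$ by \eqref{equation:j-elliptic}, as you indicate), the rest of your argument goes through exactly as in the paper.
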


\begin{proof}[Proof of Proposition \ref{proposition:polarFinite25} ]
We specialize Theorem \ref{theorem:generalPolarFinite} with  $p=5$, $j=2$ to get
{\allowdisplaybreaks \begin{align*}
&\chi_{2r}^{2/5} (z;q)\\
&=\sum_{s=0}^{1}z^{-s}q^{\frac{5}{2}s^2}C_{2s,2r}^{2/5}(q)j(-z^{2}q^{10-10s};q^{20})\\
&\qquad -\frac{1}{(q)_{\infty}^3}\sum_{s=0}^{1}
q^{-\frac{1}{8}+\frac{5(2r+1)^2}{48}}q^{10-5(r-s)}z^{-s}
j(-q^{10-10s}z^{2};q^{20})\\
&\qquad \qquad   \times \sum_{m=1}^{4}(-1)^{m}q^{\binom{m+1}{2}+m(r-5)} \\
&\qquad \qquad \qquad    \times\Big (  j(-q^{12m+5(2r+1)};q^{120} )
 -  q^{12m-m(2r+1)}j(-q^{-12m+5(2r+1)};q^{120})\Big )\\
&\qquad \qquad \qquad    \times
\Big (q^{ms-10s}m(-q^{2m-10s},-q^{10+10s}z^{-2};q^{20}) 
 + q^{-ms}m(-q^{2m+10s},-q^{10-10s}z^{2};q^{20})\Big ).
\end{align*}}%
 We then multiply by $(q)_{\infty}^3$ and expand the sum over $s$.
\end{proof}

\begin{proof}[Proof of Lemma \ref{lemma:polarFinite25AppellVanish}]
We prove (\ref{equation:m0AppellVanish}).  Let us first use (\ref{equation:mxqz-flip}) and (\ref{equation:mxqz-fnq-z}) to get
{\allowdisplaybreaks \begin{align*}
\lim_{z\to i}& j(-z^2;q^{20}) \left ( q^{m-10}m(-q^{2m-10},-q^{20}z^{-2};q^{20}) 
 + q^{-m}m(-q^{2m+10},-z^{2};q^{20})\right )  \\
 &=\lim_{z\to i} j(-z^2;q^{20}) \left ( -q^{-m}m(-q^{-2m+10},-q^{-20}z^{2};q^{20}) 
 + q^{-m}m(-q^{2m+10},-z^{2};q^{20})\right )\\
  &=-q^{-m}\left ( \sum_{k}\frac{(-1)^kq^{20\binom{k}{2}}}{1+q^{20(k-1)}q^{-2m+10}}
 - \sum_{k}\frac{(-1)^kq^{20\binom{k}{2}}}{1+q^{20(k-1)}q^{2m+10}}\right )\\
  &=q^{-m}\left ( \sum_{k}\frac{(-1)^kq^{20\binom{k+1}{2}}}{1+q^{20k}q^{-2m+10}}
 - \sum_{k}\frac{(-1)^kq^{20\binom{k+1}{2}}}{1+q^{20k}q^{2m+10}}\right )\\
  &=q^{-m}\left (\frac{J_{20}^3}{j(-q^{-2m+10};q^{20})}-\frac{J_{20}^3}{j(-q^{2m+10};q^{20})}\right )=0.
\end{align*}}%

We prove (\ref{equation:m2AppellVanish}).   Let us first use (\ref{equation:mxqz-flip}), (\ref{equation:j-flip}), and(\ref{equation:mxqz-fnq-z}) to get
{\allowdisplaybreaks \begin{align*}
\lim_{z\to iq^{5}} &j(-z^2q^{10};q^{20}) \left ( m(-q^{2m},-q^{10}z^{-2};q^{20}) 
 + m(-q^{2m},-q^{10}z^{2};q^{20})\right )\\
 &=\lim_{z\to iq^{5}} j(-z^{-2}q^{10};q^{20}) \left ( m(-q^{2m},-q^{10}z^{-2};q^{20}) 
 -q^{-2m} m(-q^{-2m},-q^{10}z^{-2};q^{20})\right )\\
  &=\left (\sum_{k}\frac{(-1)^kq^{20\binom{k}{2}}}{1+q^{20(k-1)}q^{2m}}
 - q^{-2m}\sum_{k}\frac{(-1)^kq^{20\binom{k}{2}}}{1+q^{20(k-1)}q^{-2m}}\right )\\
  &=-\left ( \sum_{k}\frac{(-1)^kq^{20\binom{k+1}{2}}}{1+q^{20k}q^{2m}}
 -q^{-2m} \sum_{k}\frac{(-1)^kq^{20\binom{k+1}{2}}}{1+q^{20k}q^{-2m}}\right )\\
  &=-\left (\frac{J_{20}^3}{j(-q^{2m};q^{20})}-q^{-2m}\frac{J_{20}^3}{j(-q^{-2m};q^{20})}\right )=0.\qedhere
\end{align*}}%

\end{proof}

\begin{proof}[Proof of Proposition \ref{proposition:weylKac25ell2rzVal}]  
We prove (\ref{equation:weylKac25ell2rzVal1}).  Taking the appropriate specialization of Proposition \ref{proposition:WeylKac} gives
\begin{equation}
\chi_{2r}^{(5,12)}(z;q)
=z^{-r}q^{-\frac{1}{8}+5\frac{(2r+1)^2}{48}}\frac{j(-q^{5(2r+1)+60}z^{-12};q^{120})
-z^{2r+1}j(-q^{-5(2r+1)+60}z^{-12};q^{120}) }
{j(z;q)}.\label{equation:weylKac25genz}
\end{equation}
Specializing (\ref{equation:weylKac25genz}) to $z=i$ and using (\ref{equation:j-flip}) gives
\begin{equation*}
\chi_{2r}^{(5,12)}(i;q)
=i^{-r}q^{-\frac{1}{8}+\frac{5(2r+1)^2}{48}}\frac{j(-q^{10r+65};q^{120})
(1-i(-1)^{r} )}
{j(i;q)}.
\end{equation*}
Using the Jacobi triple product identity (\ref{equation:JTPid}) allows us to write
\begin{equation*}
\frac{1}{j(i;q)}=\frac{1}{(1-i)(iq)_{\infty}(-iq)_{\infty}(q)_{\infty}}
=\frac{1+i}{2}\frac{1}{(-q^2;q^2)_{\infty}J_{1}}
=\frac{1+i}{2}\frac{J_{2}}{J_{1}J_{4}}.
\end{equation*}
It is then straightforward to show that
\begin{align*}
i^{-r}\frac{(1+i)(1-(-1)^ri)}{2}=(-1)^{\kappa(r)}, 
\qquad  \kappa(r):=\begin{cases} 0 & \textup{if} \ r=0,1,4,5\\
1 & \textup{if} \ r=2,3.
\end{cases}
\end{align*}

We prove (\ref{equation:weylKac25ell2rzVal2}).  Specializing (\ref{equation:weylKac25genz}) to $z=iq^5$ gives
\begin{equation*}
\chi_{2r}^{(5,12)}(iq^{5};q)
=i^{-r}q^{-5r}q^{-\frac{1}{8}+\frac{5(2r+1)^2}{48}}\frac{j(-q^{10r+5};q^{120})
-i(-1)^{r}q^{10r+5}j(-q^{-10r-5};q^{120}) }
{j(iq^{5};q)}
\end{equation*}
Using \ref{equation:j-flip} and \ref{equation:j-elliptic} and arguing as before yields
{\allowdisplaybreaks \begin{align*}
\chi_{2r}^{(5,12)}(iq^{5};q)
&=i^{-r}q^{-5r}q^{-\frac{1}{8}+5\frac{(2r+1)^2}{48}}\frac{j(-q^{10r+5};q^{120})
-i(-1)^{r}j(-q^{10r+5};q^{120}) }
{j(iq^{5};q)}\\
&=(-1)^{5}q^{\binom{5}{2}}i^{5}i^{-r}q^{-5r}q^{-\frac{1}{8}+5\frac{(2r+1)^2}{48}}\frac{j(-q^{10r+5};q^{120}) }
{J_{1,4}}\frac{1+i}{2}\left (1-i(-1)^{r} \right ) \\
&=-iq^{10-5r}q^{-\frac{1}{8}+5\frac{(2r+1)^2}{48}}\frac{j(-q^{10r+5};q^{120}) }
{J_{1,4}}\frac{1+i}{2}i^{-r}\left (1-i(-1)^{r} \right ),
\end{align*}}%
and the result follows.
\end{proof}


\subsection{Quantum number $m=0$, spin $\ell=2r$}   
We prove Theorem \ref{theorem:newMockThetaIdentitiespP512m0ell2r}.  We specialize Proposition \ref{proposition:polarFinite25} to $z=i$.   Limit (\ref{equation:m0AppellVanish}) of Lemma \ref{lemma:polarFinite25AppellVanish} indicates that the second sum over $m$ vanishes.  Dividing by $J_{10,20}$ then gives
\begin{align*}
\frac{(q)_{\infty}^3}{J_{10,20}}\chi_{2r}^{2/5} (i;q)
&=(q)_{\infty}^3C_{0,2r}^{2/5}(q)\\
&\qquad -q^{-\frac{1}{8}+\frac{5(2r+1)^2}{48}}q^{10-5r}  \times \sum_{m=1}^{4}(-1)^{m}q^{\binom{m+1}{2}+m(r-5)} \\
&\qquad  \qquad    \times\Big (  j(-q^{12m+5(2r+1)};q^{120} )
 -  q^{12m-m(2r+1)}j(-q^{-12m+5(2r+1)};q^{120})\Big )\\
&\qquad \qquad    \times 2m(-q^{2m},q^{10};q^{20}).
\end{align*}
Let us expand the sum over $m$.  This yields
{\allowdisplaybreaks \begin{align*}
\frac{(q)_{\infty}^3}{J_{10,20}}&\chi_{2r}^{2/5} (i;q)\\
&=(q)_{\infty}^3C_{0,2r}^{2/5}(q)\\
&\qquad -q^{-\frac{1}{8}+\frac{5(2r+1)^2}{48}}\\
&\qquad \qquad   \times\Big [  - q^{6-4r}
    \times\Big (  j(-q^{17+10r};q^{120} )
 -  q^{11-2r}j(-q^{-7+10r};q^{120})\Big )
     \times 2m(-q^{2},q^{10};q^{20})\\
&\qquad \qquad  \qquad  +q^{3-3r}
    \times\Big (  j(-q^{29+10r};q^{120} )
 -  q^{22-4r}j(-q^{-19+10r};q^{120})\Big )
    \times 2m(-q^{4},q^{10};q^{20})\\
&\qquad \qquad \qquad  -q^{1-2r}
  \times\Big (  j(-q^{41+10r};q^{120} )
 -  q^{33-6r}j(-q^{-31+10r};q^{120})\Big )
     \times 2m(-q^{6},q^{10};q^{20})\\
&\qquad \qquad  \qquad  +q^{-r}
   \times\Big (  j(-q^{53+10r};q^{120} )
 -  q^{44-8r}j(-q^{-43+10r};q^{120})\Big )   \times 2m(-q^{8},q^{10};q^{20})\Big ] .
\end{align*}}%

We replace the character using identity (\ref{equation:weylKac25ell2rzVal1}) of Proposition \ref{proposition:weylKac25ell2rzVal} and change the string function notation using (\ref{equation:mathCalCtoStringC}).  This produces

{\allowdisplaybreaks \begin{align*}
\frac{(q)_{\infty}^3}{J_{10,20}}&(-1)^{\kappa(r)}q^{-\frac{1}{8}+\frac{5(2r+1)^2}{48}}j(-q^{10r+65};q^{120})\frac{J_{2}}{J_{1}J_{4}}\\
&=(q)_{\infty}^3q^{-\frac{1}{8}+\frac{5(2r+1)^2}{48}}\mathcal{C}_{0,2r}^{2/5}(q)\\
&\qquad -q^{-\frac{1}{8}+\frac{5(2r+1)^2}{48}}\\
&\qquad \qquad \times \Big [  - q^{6-4r}
    \times\Big (  j(-q^{17+10r};q^{120} )
 -  q^{11-2r}j(-q^{-7+10r};q^{120})\Big )
     \times 2m(-q^{2},q^{10};q^{20})\\
&\qquad \qquad  \qquad   +q^{3-3r}
    \times\Big (  j(-q^{29+10r};q^{120} )
 -  q^{22-4r}j(-q^{-19+10r};q^{120})\Big )
     \times 2m(-q^{4},q^{10};q^{20})\\
&\qquad \qquad  \qquad -q^{1-2r}
  \times\Big (  j(-q^{41+10r};q^{120} )
 -  q^{33-6r}j(-q^{-31+10r};q^{120})\Big )
    \times 2m(-q^{6},q^{10};q^{20})\\
&\qquad \qquad \qquad   +q^{-r}
   \times\Big (  j(-q^{53+10r};q^{120} )
 -  q^{44-8r}j(-q^{-43+10r};q^{120})\Big )
    \times 2m(-q^{8},q^{10};q^{20})\Big ] .
\end{align*}}%
Cancelling out the fractional power of $q$ and rearranging terms gives
{\allowdisplaybreaks \begin{align*}
(q)_{\infty}^3&\mathcal{C}_{0,2r}^{2/5}(q)\\
&=(-1)^{\kappa(r)}\frac{(q)_{\infty}^3}{J_{10,20}}j(-q^{10r+65};q^{120})\frac{J_{2}}{J_{1}J_{4}}\\
&\qquad   - q^{6-4r}
    \times\Big (  j(-q^{17+10r};q^{120} )
 -  q^{11-2r}j(-q^{-7+10r};q^{120})\Big )
     \times 2m(-q^{2},q^{10};q^{20})\\
&\qquad     +q^{3-3r}
    \times\Big (  j(-q^{29+10r};q^{120} )
 -  q^{22-4r}j(-q^{-19+10r};q^{120})\Big )
     \times 2m(-q^{4},q^{10};q^{20})\\
&\qquad   -q^{1-2r}
  \times\Big (  j(-q^{41+10r};q^{120} )
 -  q^{33-6r}j(-q^{-31+10r};q^{120})\Big )
    \times 2m(-q^{6},q^{10};q^{20})\\
&\qquad    +q^{-r}
   \times\Big (  j(-q^{53+10r};q^{120} )
 -  q^{44-8r}j(-q^{-43+10r};q^{120})\Big )
    \times 2m(-q^{8},q^{10};q^{20}).
\end{align*}}%

Proposition \ref{proposition:alternat10thAppellForms} enables us to rewrite the Appell functions in terms of the four tenth-order mock theta functions

{\allowdisplaybreaks \begin{align*}
(q)_{\infty}^3\mathcal{C}_{0,2r}^{2/5}(q)
&=(-1)^{\kappa(r)}\frac{(q)_{\infty}^3}{J_{10,20}}j(-q^{10r+65};q^{120})\frac{J_{2}}{J_{1}J_{4}}\\
&\qquad   - q^{6-4r}
    \times\Big (  j(-q^{17+10r};q^{120} )
 -  q^{11-2r}j(-q^{-7+10r};q^{120})\Big )\\
&\qquad \qquad \qquad      \times \left (q^2\phi_{10}(-q^2)
-q^2\frac{J_{20}^2J_{6,20}}
{\overline{J}_{2,10}J_{4,20}}
\cdot \frac{J_{2}}{\overline{J}_{6,20}} \right ) \\
&\qquad     +q^{3-3r}
    \times\Big (  j(-q^{29+10r};q^{120} )
 -  q^{22-4r}j(-q^{-19+10r};q^{120})\Big )\\
&\qquad \qquad \qquad      \times \left ( \chi_{10}(q^4) 
-q^{4}\frac{J_{20}^2J_{2,20}}{\overline{J}_{4,10}J_{8,20}}
\cdot \frac{J_{2}}
{\overline{J}_{8,20}}\right ) \\
&\qquad   -q^{1-2r}
  \times\Big (  j(-q^{41+10r};q^{120} )
 -  q^{33-6r}j(-q^{-31+10r};q^{120})\Big )\\
&\qquad \qquad \qquad     \times \left ( -\psi(-q^2)
 -q^2\frac{J_{20}^2 J_{2,20}}
{\overline{J}_{4,10}J_{8,20}}
\cdot \frac{J_{2}}
{\overline{J}_{2,20}}\right )\\
&\qquad     +q^{-r}
   \times\Big (  j(-q^{53+10r};q^{120} )
 -  q^{44-8r}j(-q^{-43+10r};q^{120})\Big )\\
&\qquad \qquad  \qquad   \times \left ( X_{10}(q^4)
-\frac{J_{20}^2J_{6,20}}
{\overline{J}_{2,10}J_{4,20}}
\cdot \frac{J_{2}}
{\overline{J}_{4,20}} \right ).
\end{align*}}%

Upon expanding the bracket, we see that it remains to show that
{\allowdisplaybreaks \begin{align*}
&-q^{\frac{1}{2}r^2-\frac{5}{2}r+1}j(q^{2+4r};q^{12})j(-q^{1+2r};q^{8})\frac{J_{1}}{J_{4}}\\
&\qquad =(-1)^{\kappa(r)}\frac{(q)_{\infty}^3}{J_{10,20}}j(-q^{10r+65};q^{120})\frac{J_{2}}{J_{1}J_{4}}\\
&\qquad \qquad  + q^{6-4r}
    \times\Big (  j(-q^{17+10r};q^{120} )
 -  q^{11-2r}j(-q^{-7+10r};q^{120})\Big )
\times q^2\frac{J_{20}^2J_{6,20}}
{\overline{J}_{2,10}J_{4,20}}
\cdot \frac{J_{2}}{\overline{J}_{6,20}}  \\
&\qquad   \qquad   -q^{3-3r}
    \times\Big (  j(-q^{29+10r};q^{120} )
 -  q^{22-4r}j(-q^{-19+10r};q^{120})\Big )
       \times 
q^{4}\frac{J_{20}^2J_{2,20}}{\overline{J}_{4,10}J_{8,20}}
\cdot \frac{J_{2}}
{\overline{J}_{8,20}} \\
&\qquad  \qquad +q^{1-2r}
  \times\Big (  j(-q^{41+10r};q^{120} )
 -  q^{33-6r}j(-q^{-31+10r};q^{120})\Big )     \times 
 q^2\frac{J_{20}^2 J_{2,20}}
{\overline{J}_{4,10}J_{8,20}}
\cdot \frac{J_{2}}
{\overline{J}_{2,20}}\\
&\qquad \qquad   -q^{-r}
   \times\Big (  j(-q^{53+10r};q^{120} )
 -  q^{44-8r}j(-q^{-43+10r};q^{120})\Big )   \times
\frac{J_{20}^2J_{6,20}}
{\overline{J}_{2,10}J_{4,20}}
\cdot \frac{J_{2}}
{\overline{J}_{4,20}},
\end{align*}}%
but this is just Proposition \ref{proposition:masterThetaIdentitypP512m0ell2r}.


\subsection{Quantum number $m=2$, spin $\ell=2r$} We prove Theorem \ref{theorem:newMockThetaIdentitiespP512m2ell2r}.
We specialize Proposition \ref{proposition:polarFinite25} to $z=iq^{5}$.   Limit (\ref{equation:m2AppellVanish}) of Lemma \ref{lemma:polarFinite25AppellVanish} indicates that the first sum over $m$ vanishes.  Dividing by $J_{10,20}$ then gives
{\allowdisplaybreaks \begin{align*}
\frac{(q)_{\infty}^3}{J_{10,20}}&\chi_{2r}^{2/5} (iq^{5};q)\\
&=i^{-1}q^{-\frac{5}{2}}(q)_{\infty}^3C_{2,2r}^{2/5}(q)\\
&\qquad  -i^{-1}q^{-\frac{1}{8}+\frac{5(2r+1)^2}{48}}q^{10-5r}\\
&\qquad \qquad   \times \sum_{m=1}^{4}(-1)^{m}q^{\binom{m+1}{2}+m(r-5)} \\
&\qquad \qquad \qquad    \times\Big (  j(-q^{12m+5(2r+1)};q^{120} )
 -  q^{12m-m(2r+1)}j(-q^{-12m+5(2r+1)};q^{120})\Big )\\
&\qquad \qquad \qquad    \times
\Big (q^{m-10}m(-q^{2m-10},q^{10};q^{20}) 
 + q^{-m}m(-q^{2m+10},q^{10};q^{20})\Big ).
\end{align*}}%

We expand the sum over $m$ to get
{\allowdisplaybreaks \begin{align*}
\frac{(q)_{\infty}^3}{J_{10,20}}&\chi_{2r}^{2/5} (iq^{5};q)\\
&=i^{-1}q^{-\frac{5}{2}}(q)_{\infty}^3C_{2,2r}^{2/5}(q)\\
&\qquad  -i^{-1}q^{-\frac{1}{8}+\frac{5(2r+1)^2}{48}}q^{10-5r}\\
&\qquad \qquad  \times \Big [ -q^{-4+r}     \times\Big (  j(-q^{17+10r};q^{120} )
 -  q^{11-2r}j(-q^{-7+10r};q^{120})\Big )\\
&\qquad \qquad \qquad    \times
\Big (q^{-9}m(-q^{-8},q^{10};q^{20}) 
 + q^{-1}m(-q^{12},q^{10};q^{20})\Big )\\
&\qquad \qquad   +q^{-7+2r} 
    \times\Big (  j(-q^{29+10r};q^{120} )
 -  q^{22-4r}j(-q^{-19+10r};q^{120})\Big )\\
&\qquad \qquad \qquad    \times
\Big (q^{-8}m(-q^{-6},q^{10};q^{20}) 
 + q^{-2}m(-q^{14},q^{10};q^{20})\Big )\\
&\qquad \qquad    -q^{-9+3r}    \times\Big (  j(-q^{41+10r};q^{120} )
 -  q^{33-6r}j(-q^{-31+10r};q^{120})\Big )\\
&\qquad \qquad \qquad    \times
\Big (q^{-7}m(-q^{-4},q^{10};q^{20}) 
 + q^{-3}m(-q^{16},q^{10};q^{20})\Big )\\
&\qquad \qquad  +q^{-10+4r} 
    \times\Big (  j(-q^{53+10r};q^{120} )
 -  q^{44-8r}j(-q^{-43+10r};q^{120})\Big )\\
&\qquad \qquad \qquad    \times
\Big (q^{-6}m(-q^{-2},q^{10};q^{20}) 
 + q^{-4}m(-q^{18},q^{10};q^{20})\Big )\Big ]. 
\end{align*}}%

We replace the character using identity (\ref{equation:weylKac25ell2rzVal2}) of Proposition \ref{proposition:weylKac25ell2rzVal} and change the string function notation using (\ref{equation:mathCalCtoStringC}).  This produces

{\allowdisplaybreaks \begin{align*}
-(-1)^{\kappa(r)}&iq^{10-5r}q^{-\frac{1}{8}+\frac{5(2r+1)^2}{48}}j(-q^{10r+5};q^{120})
\frac{J_{2}}{J_{1}J_{4}}\\
&=i^{-1}q^{-\frac{5}{2}}(q)_{\infty}^3q^{-\frac{1}{8}+\frac{5(2r+1)^2}{48}}q^{-\frac{5}{2}}\mathcal{C}_{2,2r}^{2/5}(q)\\
&\qquad  -i^{-1}q^{-\frac{1}{8}+\frac{5(2r+1)^2}{48}}q^{10-5r}\\
&\qquad  \qquad \times \Big [ -q^{-4+r}     \times\Big (  j(-q^{17+10r};q^{120} )
 -  q^{11-2r}j(-q^{-7+10r};q^{120})\Big )\\
&\qquad \qquad \qquad  \qquad \times
\Big (q^{-9}m(-q^{-8},q^{10};q^{20}) 
 + q^{-1}m(-q^{12},q^{10};q^{20})\Big )\\
&\qquad \qquad  \qquad  +q^{-7+2r} 
    \times\Big (  j(-q^{29+10r};q^{120} )
 -  q^{22-4r}j(-q^{-19+10r};q^{120})\Big )\\
&\qquad \qquad \qquad \qquad   \times
\Big (q^{-8}m(-q^{-6},q^{10};q^{20}) 
 + q^{-2}m(-q^{14},q^{10};q^{20})\Big )\\
&\qquad \qquad  \qquad   -q^{-9+3r}    \times\Big (  j(-q^{41+10r};q^{120} )
 -  q^{33-6r}j(-q^{-31+10r};q^{120})\Big )\\
&\qquad \qquad \qquad  \qquad  \times
\Big (q^{-7}m(-q^{-4},q^{10};q^{20}) 
 + q^{-3}m(-q^{16},q^{10};q^{20})\Big )\\
&\qquad \qquad \qquad +q^{-10+4r} 
    \times\Big (  j(-q^{53+10r};q^{120} )
 -  q^{44-8r}j(-q^{-43+10r};q^{120})\Big )\\
&\qquad \qquad \qquad \qquad    \times
\Big (q^{-6}m(-q^{-2},q^{10};q^{20}) 
 + q^{-4}m(-q^{18},q^{10};q^{20})\Big )\Big ].
 \end{align*}}%
 
Multiplying both sides by $iq^5$ and cancelling out the fractional power of $q$ gives
{\allowdisplaybreaks \begin{align*}
(-1)^{\kappa(r)}&q^{15-5r}\frac{(q)_{\infty}^3}{J_{10,20}}
j(-q^{10r+5};q^{120})\frac{J_{2}}{J_{1}J_{4}}\\
&=(q)_{\infty}^3\mathcal{C}_{2,2r}^{2/5}(q)\\
&\qquad  -q^{15-5r} \Big [ -q^{-4+r}     \times\Big (  j(-q^{17+10r};q^{120} )
 -  q^{11-2r}j(-q^{-7+10r};q^{120})\Big )\\
&\qquad \qquad \qquad  \qquad \times
\Big (q^{-9}m(-q^{-8},q^{10};q^{20}) 
 + q^{-1}m(-q^{12},q^{10};q^{20})\Big )\\
&\qquad \qquad  \qquad  +q^{-7+2r} 
    \times\Big (  j(-q^{29+10r};q^{120} )
 -  q^{22-4r}j(-q^{-19+10r};q^{120})\Big )\\
&\qquad \qquad \qquad \qquad   \times
\Big (q^{-8}m(-q^{-6},q^{10};q^{20}) 
 + q^{-2}m(-q^{14},q^{10};q^{20})\Big )\\
&\qquad \qquad  \qquad   -q^{-9+3r}    \times\Big (  j(-q^{41+10r};q^{120} )
 -  q^{33-6r}j(-q^{-31+10r};q^{120})\Big )\\
&\qquad \qquad \qquad  \qquad  \times
\Big (q^{-7}m(-q^{-4},q^{10};q^{20}) 
 + q^{-3}m(-q^{16},q^{10};q^{20})\Big )\\
&\qquad \qquad \qquad +q^{-10+4r} 
    \times\Big (  j(-q^{53+10r};q^{120} )
 -  q^{44-8r}j(-q^{-43+10r};q^{120})\Big )\\
&\qquad \qquad \qquad \qquad    \times
\Big (q^{-6}m(-q^{-2},q^{10};q^{20}) 
 + q^{-4}m(-q^{18},q^{10};q^{20})\Big )\Big ].
 \end{align*}}%

We rewrite the first Appell function of each line using (\ref{equation:mxqz-flip}).  This gives
{\allowdisplaybreaks  \begin{align*}
(-1)^{\kappa(r)}&q^{15-5r}\frac{(q)_{\infty}^3}{J_{10,20}}
j(-q^{10r+5};q^{120})\frac{J_{2}}{J_{1}J_{4}}\\
&=(q)_{\infty}^3\mathcal{C}_{2,2r}^{2/5}(q)\\
&\qquad  -q^{15-5r}\Big [ -q^{-4+r}     \times\Big (  j(-q^{17+10r};q^{120} )
 -  q^{11-2r}j(-q^{-7+10r};q^{120})\Big )\\
&\qquad \qquad \qquad    \times
\Big (-q^{-1}m(-q^{8},q^{-10};q^{20}) 
 + q^{-1}m(-q^{12},q^{10};q^{20})\Big )\\
&\qquad \qquad   +q^{-7+2r} 
    \times\Big (  j(-q^{29+10r};q^{120} )
 -  q^{22-4r}j(-q^{-19+10r};q^{120})\Big )\\
&\qquad \qquad \qquad    \times
\Big (-q^{-2}m(-q^{6},q^{-10};q^{20}) 
 + q^{-2}m(-q^{14},q^{10};q^{20})\Big )\\
&\qquad \qquad    -q^{-9+3r}    \times\Big (  j(-q^{41+10r};q^{120} )
 -  q^{33-6r}j(-q^{-31+10r};q^{120})\Big )\\
&\qquad \qquad \qquad    \times
\Big (-q^{-3}m(-q^{4},q^{-10};q^{20}) 
 + q^{-3}m(-q^{16},q^{10};q^{20})\Big )\\
&\qquad \qquad  +q^{-10+4r} 
    \times\Big (  j(-q^{53+10r};q^{120} )
 -  q^{44-8r}j(-q^{-43+10r};q^{120})\Big )\\
&\qquad \qquad \qquad    \times
\Big (-q^{-4}m(-q^{2},q^{-10};q^{20}) 
 + q^{-4}m(-q^{18},q^{10};q^{20})\Big )\Big ]. 
 \end{align*}}%
 Rearranging terms yields
 {\allowdisplaybreaks  \begin{align*}
  (q)_{\infty}^3\mathcal{C}_{2,2r}^{2/5}(q)
&=(-1)^{\kappa(r)}q^{15-5r}\frac{(q)_{\infty}^3}{J_{10,20}}
j(-q^{10r+5};q^{120})\frac{J_{2}}{J_{1}J_{4}}\\
&\qquad  +q^{15-5r}\Big [ -q^{-4+r}     \times\Big (  j(-q^{17+10r};q^{120} )
 -  q^{11-2r}j(-q^{-7+10r};q^{120})\Big )\\
&\qquad \qquad \qquad    \times
\Big (-q^{-1}m(-q^{8},q^{-10};q^{20}) 
 + q^{-1}m(-q^{12},q^{10};q^{20})\Big )\\
&\qquad \qquad   +q^{-7+2r} 
    \times\Big (  j(-q^{29+10r};q^{120} )
 -  q^{22-4r}j(-q^{-19+10r};q^{120})\Big )\\
&\qquad \qquad \qquad    \times
\Big (-q^{-2}m(-q^{6},q^{-10};q^{20}) 
 + q^{-2}m(-q^{14},q^{10};q^{20})\Big )\\
&\qquad \qquad    -q^{-9+3r}    \times\Big (  j(-q^{41+10r};q^{120} )
 -  q^{33-6r}j(-q^{-31+10r};q^{120})\Big )\\
&\qquad \qquad \qquad    \times
\Big (-q^{-3}m(-q^{4},q^{-10};q^{20}) 
 + q^{-3}m(-q^{16},q^{10};q^{20})\Big )\\
&\qquad \qquad  +q^{-10+4r} 
    \times\Big (  j(-q^{53+10r};q^{120} )
 -  q^{44-8r}j(-q^{-43+10r};q^{120})\Big )\\
&\qquad \qquad \qquad    \times
\Big (-q^{-4}m(-q^{2},q^{-10};q^{20}) 
 + q^{-4}m(-q^{18},q^{10};q^{20})\Big )\Big ]. 
 \end{align*}}%
 
 Consolidating the lead $q$-terms gives
{\allowdisplaybreaks  \begin{align*}
  (q)_{\infty}^3\mathcal{C}_{2,2r}^{2/5}(q)
&=(-1)^{\kappa(r)}q^{15-5r}\frac{(q)_{\infty}^3}{J_{10,20}}
j(-q^{10r+5};q^{120})\frac{J_{2}}{J_{1}J_{4}}\\
&\qquad   -q^{10-4r}     \times\Big (  j(-q^{17+10r};q^{120} )
 -  q^{11-2r}j(-q^{-7+10r};q^{120})\Big )\\
&\qquad \qquad \qquad    \times
\Big (-m(-q^{8},q^{10};q^{20}) 
 + m(-q^{12},q^{10};q^{20})\Big )\\
&\qquad \qquad   +q^{6-3r} 
    \times\Big (  j(-q^{29+10r};q^{120} )
 -  q^{22-4r}j(-q^{-19+10r};q^{120})\Big )\\
&\qquad \qquad \qquad    \times
\Big (-m(-q^{6},q^{10};q^{20}) 
 + m(-q^{14},q^{10};q^{20})\Big )\\
&\qquad \qquad    -q^{3-2r}    \times\Big (  j(-q^{41+10r};q^{120} )
 -  q^{33-6r}j(-q^{-31+10r};q^{120})\Big )\\
&\qquad \qquad \qquad    \times
\Big (-m(-q^{4},q^{10};q^{20}) 
 + m(-q^{16},q^{10};q^{20})\Big )\\
&\qquad \qquad  +q^{1-r} 
    \times\Big (  j(-q^{53+10r};q^{120} )
 -  q^{44-8r}j(-q^{-43+10r};q^{120})\Big )\\
&\qquad \qquad \qquad    \times
\Big (-m(-q^{2},q^{10};q^{20}) 
 + m(-q^{18},q^{10};q^{20})\Big ). 
 \end{align*}}%

We rewrite the second Appell function in each line using (\ref{equation:mxqz-fnq-x}) and (\ref{equation:mxqz-flip}).  This brings us to
{\allowdisplaybreaks  \begin{align*}
  (q)_{\infty}^3\mathcal{C}_{2,2r}^{2/5}(q)
&=(-1)^{\kappa(r)}q^{15-5r}\frac{(q)_{\infty}^3}{J_{10,20}}
j(-q^{10r+5};q^{120})\frac{J_{2}}{J_{1}J_{4}}\\
&\qquad   -q^{10-4r}     \times\Big (  j(-q^{17+10r};q^{120} )
 -  q^{11-2r}j(-q^{-7+10r};q^{120})\Big )\\
&\qquad \qquad \qquad    \times
\Big (1-2m(-q^{8},q^{10};q^{20}) \Big )\\
&\qquad \qquad   +q^{6-3r} 
    \times\Big (  j(-q^{29+10r};q^{120} )
 -  q^{22-4r}j(-q^{-19+10r};q^{120})\Big )\\
&\qquad \qquad \qquad    \times
\Big (1-2m(-q^{6},q^{10};q^{20}) \Big )\\
&\qquad \qquad    -q^{3-2r}    \times\Big (  j(-q^{41+10r};q^{120} )
 -  q^{33-6r}j(-q^{-31+10r};q^{120})\Big )\\
&\qquad \qquad \qquad    \times
\Big (1-2m(-q^{4},q^{10};q^{20}) \Big )\\
&\qquad \qquad  +q^{1-r} 
    \times\Big (  j(-q^{53+10r};q^{120} )
 -  q^{44-8r}j(-q^{-43+10r};q^{120})\Big )\\
&\qquad \qquad \qquad    \times
\Big (1-2m(-q^{2},q^{10};q^{20}) \Big ). 
 \end{align*}}%

 Using the Proposition \ref{proposition:alternat10thAppellForms}, we rewrite the Appell functions in terms of the four tenth-order mock theta functions.  This gives
 
{\allowdisplaybreaks \begin{align*}
 (q)_{\infty}^3\mathcal{C}_{2,2r}^{2/5}(q)
&=(-1)^{\kappa(r)}q^{15-5r}\frac{(q)_{\infty}^3}{J_{10,20}}
j(-q^{10r+5};q^{120})\frac{J_{2}}{J_{1}J_{4}}\\
&\qquad   -q^{10-4r}     \times\Big (  j(-q^{17+10r};q^{120} )
 -  q^{11-2r}j(-q^{-7+10r};q^{120})\Big )\\
&\qquad \qquad \qquad    \times
\left( (1-\left ( X_{10}(q^4)
-\frac{J_{20}^2J_{6,20}}
{\overline{J}_{2,10}J_{4,20}}
\cdot \frac{J_{2}}
{\overline{J}_{4,20}} \right ) \right)\\
&\qquad \qquad   +q^{6-3r} 
    \times\Big (  j(-q^{29+10r};q^{120} )
 -  q^{22-4r}j(-q^{-19+10r};q^{120})\Big )\\
&\qquad \qquad \qquad    \times
\left (1-\left (-\psi(-q^2)
 -q^2\frac{J_{20}^2 J_{2,20}}
{\overline{J}_{4,10}J_{8,20}}
\cdot \frac{J_{2}}
{\overline{J}_{2,20}} \right ) \right )\\
&\qquad \qquad    -q^{3-2r}    \times\Big (  j(-q^{41+10r};q^{120} )
 -  q^{33-6r}j(-q^{-31+10r};q^{120})\Big )\\
&\qquad \qquad \qquad    \times
\left (1- \left (\chi_{10}(q^4) 
-q^{4}\frac{J_{20}^2J_{2,20}}{\overline{J}_{4,10}J_{8,20}}
\cdot \frac{J_{2}}
{\overline{J}_{8,20}} \right )  \right )\\
&\qquad \qquad  +q^{1-r} 
    \times\Big (  j(-q^{53+10r};q^{120} )
 -  q^{44-8r}j(-q^{-43+10r};q^{120})\Big )\\
&\qquad \qquad \qquad    \times
\left (1-\left (q^2\phi_{10}(-q^2)
-q^2\frac{J_{20}^2J_{6,20}}
{\overline{J}_{2,10}J_{4,20}}
\cdot \frac{J_{2}}{\overline{J}_{6,20}} \right )  \right ).
 \end{align*}}%
 
We see that it remains to show that
{\allowdisplaybreaks \begin{align*}
& q^{\frac{1}{2}r^2-\frac{3}{2}r+3}j(q^{2+4r};q^{12})j(-q^{5+2r};q^8)\frac{J_{1}}{J_{4}}\\
&\qquad =(-1)^{\kappa(r)}q^{15-5r}\frac{(q)_{\infty}^3}{J_{10,20}}
\frac{j(-q^{10r+5};q^{120}) }
{J_{1,4}}\\
&\qquad \qquad   -q^{10-4r}     \times\Big (  j(-q^{17+10r};q^{120} )
 -  q^{11-2r}j(-q^{-7+10r};q^{120})\Big )
     \times
\frac{J_{20}^2J_{6,20}}
{\overline{J}_{2,10}J_{4,20}}
\cdot \frac{J_{2}}
{\overline{J}_{4,20}} \\
&\qquad   \qquad  +q^{6-3r} 
    \times\Big (  j(-q^{29+10r};q^{120} )
 -  q^{22-4r}j(-q^{-19+10r};q^{120})\Big )
     \times
q^2\frac{J_{20}^2 J_{2,20}}
{\overline{J}_{4,10}J_{8,20}}
\cdot \frac{J_{2}}
{\overline{J}_{2,20}} \\
&\qquad  \qquad    -q^{3-2r}    \times\Big (  j(-q^{41+10r};q^{120} )
 -  q^{33-6r}j(-q^{-31+10r};q^{120})\Big )
     \times
q^{4}\frac{J_{20}^2J_{2,20}}{\overline{J}_{4,10}J_{8,20}}
\cdot \frac{J_{2}}
{\overline{J}_{8,20}} \\
&\qquad \qquad  +q^{1-r} 
    \times\Big (  j(-q^{53+10r};q^{120} )
 -  q^{44-8r}j(-q^{-43+10r};q^{120})\Big )
     \times
q^2\frac{J_{20}^2J_{6,20}}
{\overline{J}_{2,10}J_{4,20}}
\cdot \frac{J_{2}}{\overline{J}_{6,20}}, 
 \end{align*}}%
 but this is just Proposition \ref{proposition:masterThetaIdentitypP512m2ell2r}.


\section{Acknowledgments}

The work is supported by the Ministry of Science and Higher Education of the Russian
Federation (agreement no. 075-15-2025-343).

\end{document}